\newif\ifarxiv\arxivfalse
			\let\cl@chapter\undefined
		\newcommand\mynum[1]{$ˆ{\@fnsymbol{#1}}$}
		\newcommand\D[1][h]{%
			\@ifnextchar_{\operatorname D}{%
				\ifstrempty{#1}{\operatorname D}{\operatorname D_{#1}}%
			}
		}
\Crefname{equation}{}{}
\Crefname{item}{}{}{}
\newcommand{\TheKeywords}{%
	Nonsmooth nonconvex optimization,
        Paraconvex functions, 
        Projected subgradient methods,
        H\"{o}lderian error bound,
        Linear convergence,
        Robust low-rank matrix recovery.%
}
\newcommand{\TheSubjclass}{%
	90C06, 
	90C25, 
	90C26, 
	49J52, 
	49J53.
}
\newcommand{\TheTitle}{%
	Projected subgradient methods for paraconvex optimization: Application to robust low-rank matrix recovery%
}
\newcommand{\TheShortTitle}{%
	Projected subgradient methods for paraconvex optimization%
}
\newcommand{\TheShortAuthor}{%
	M. Rahimi, S. Ghaderi, Y. Moreau, and M. Ahookhosh%
}
\newcommand{\TheFunding}{%
	MR and MA acknowledge the support by the \emph{Research Foundation Flanders (FWO)} research project G081222N and
	\emph{UA BOF DocPRO4 projects} with ID 46929 and 48996;
	SG also acknowledges the support of the  \emph{Research Foundation Flanders (FWO)} research project 12AK924N.
}
\newcommand{\TheAddressUA}{%
		Department of Mathematics, University of Antwerp, Middelheimlaan 1, B-2020 Antwerp, Belgium}
\newcommand{\TheAbstract}{%

    This paper is devoted to the class of paraconvex functions and presents some of its fundamental properties, characterization, and examples that can be used for their recognition and optimization. Next, the convergence analysis of the projected subgradient methods with several step-sizes (i.e., constant, nonsummable, square-summable but not
    summable, geometrically decaying, and Scaled Polyak's step-sizes) to global minima for this class of functions is studied. In particular, the convergence rate of the proposed methods is investigated under paraconvexity and the H\"{o}lderian error bound condition, where the latter is an extension of the classical error bound condition.
    The preliminary numerical experiments on several robust low-rank matrix recovery problems (i.e., robust matrix completion, image inpainting, robust nonnegative matrix factorization, robust matrix compression, and robust image deblurring) indicate promising behavior for these projected subgradient methods, validating our theoretical foundations.
}
\begin{document}

\ifarxiv
	\title[\TheShortTitle]{\TheTitle}
	\author[\TheShortAuthor]{%
		Morteza Rahimi\textsuperscript{1},\ 
		Susan Ghaderi\textsuperscript{2}.\ 
		Masoud Ahookhosh\textsuperscript{2},\ and\ 

	}
	\thanks{\textsuperscript{1}\TheAddressKU}
	\thanks{\textsuperscript{2}\TheAddressMons}
	\thanks{\TheFunding}
	\keywords{\TheKeywords}
	\subjclass{\TheSubjclass}


		\begin{abstract}
			\TheAbstract
		\end{abstract}

		\maketitle
        
\else

	\journalname{...}

	\title{\TheTitle\thanks{\TheFunding}}
	\titlerunning{\TheShortTitle}

	\author{%
            Morteza Rahimi\and
            Susan Ghaderi\and
            Yves Moreau\and
		Masoud Ahookhosh%
	}
	\authorrunning{\TheShortAuthor}
    
    \institute{%
        Corresponding author: Morteza Rahimi\\
		M. Rahimi and M. Ahookhosh
		\at
			\TheAddressUA.\\
			{\tt
					E-mail: \href{mailto:morteza.rahimi@uantwerpen.be}{morteza.rahimi@uantwerpen.be},%
					\href{mailto:masoud.ahookhosh@uantwerpen.be}{masoud.ahookhosh@uantwerpen.be}%
			}%
		\and
		S. Ghaderi, Y. Moreau%
		\at
			\TheAddressKU.\\
			{\tt
				    E-mail: \href{mailto:susan.ghaderi@esat.kuleuven.be}{susan.ghaderi@esat.kuleuven.be},%
                        \href{mailto:Yves.Moreau@kuleuven.be}{Yves.Moreau@kuleuven.be}%
			}%
	}%

	\maketitle

	\begin{abstract}
		\TheAbstract
		\keywords{\TheKeywords}%
		\subclass{\TheSubjclass}%
	\end{abstract}
\fi

\section{Introduction}\label{sec:Introduction}

Subgradient methods and their projected variants were originally introduced for solving convex nonsmooth problems in the 1960s; see, e.g., \cite{ermol1966methods,polyak1967general,polyak1987introduction,shor2012minimization}. Due to their simple structure and low memory requirement, these methods have been widely employed \cite{nesterov2009primal,nesterov2014subgradient,nesterov2024primal}, especially when the objective function does not admit a simple structure for splitting algorithms; cf. \cite{ahookhosh2018optimal,ahookhosh2017optimal,ahookhosh2018solving,neumaier2016osga}. Recent trends in application domains such as signal and image processing, machine learning, data science, statistics, and sparse optimization often involve nonsmooth and nonconvex functions. Representative examples include image deblurring \cite{liu2021efficient}, image inpainting \cite{yu2019new}, compressed sensing \cite{chartrand2008restricted}, sparse recovery \cite{wen2018survey}, matrix factorization \cite{chi2019nonconvex}, nonnegative matrix factorization \cite{gillis2020nonnegative}, matrix completion \cite{sun2016guaranteed}, tensor factorization \cite{welling2001positive}, phase retrieval \cite{eldar2014phase}, and deep neural networks \cite{jain2017non}. This motivates the development of efficient subgradient methods for large-scale nonconvex problems, which is the primary goal of this work.

In the current paper, we are interested in solving the nonsmooth and nonconvex optimization problem
\begin{equation}\label{prb2}
\min_{x\in X} f(x),
\end{equation}
where $f:\R^n\to \overline{\R}$ is proper, nonsmooth, and nonconvex, and $X\subseteq \dom(f)$ is nonempty, closed, and convex. We assume the set of optimal solutions $\mathcal{X}^*$ is nonempty and $f^*>-\infty$ is the optimal value. Compared to convex problems, nonsmooth and nonconvex problems are challenging due to (i) possible nondifferentiability of $f$, requiring nonempty Fr\'{e}chet/Mordukhovich/Clarke subdifferentials \cite{clarke1990optimization}, (ii) complex landscapes with minima, maxima, and saddle points, and (iii) local minimizers not necessarily being global.

First-order methods are suitable for problem \cref{prb2}, as they rely only on function values and (sub)gradients \cite{bertsekas2003convex,nesterov2018lectures,ruder2016overview}. Projected Subgradient Methods (PSMs), in particular, iterate as
\[
    \left\{ \begin{array}{l}
        \text{Select}~\zeta_k \in \partial f(x_k), \\
        x_{k+1} = \operatorname{proj}_X\left(x_k + \alpha_k \frac{\zeta_k}{\|\zeta_k\|}\right),
    \end{array} \right.
\]
where $\alpha_k>0$ is a predetermined step-size and $\operatorname{proj}_X$ is the projection onto $X$. This approach avoids function evaluations and has the step-size predefined before its implementation, making it suitable for large-to huge-scale problems. In the nonconvex setting, most studies of subgradient methods have only been carried out for limited classes of functions, such as quasiconvex functions \cite{kiwiel2001convergence,hu2020convergence,hu2015inexact,quiroz2015inexact} and weakly convex functions \cite{davis2018subgradient,li2021weakly}, along with references therein. This is our primary motivation to explore the convergence analysis of subgradient methods for larger classes of functions.

We here focus on the class of $\nu$-paraconvex functions with $\nu\in(0,1]$. A function $\func{h}{\R^n}{\Rinf}$ is said to be $\nu$-paraconvex if there exists $\rho\ge 0$ such that
\begin{equation}\label{eq:para000}
h(\lambda x+(1-\lambda)y) \le \lambda h(x) + (1-\lambda) h(y) + \rho \min\{\lambda,1-\lambda\}\|x-y\|^{1+\nu},
\end{equation}
for each $x,y\in X$ and $\lambda\in [0,1]$.
The class of $\nu$-paraconvex functions generalizes weakly convex functions (\(1\)-paraconvex functions) \cite{rolewicz1979paraconvex,rolewicz1979gamaparaconvex,vial1983strong}, with additional examples not satisfying weak convexity. The class of $1$-paraconvex functions arises in applications including optimal control, image processing, and machine learning \cite{cannarsa2004semiconcave,li2021weakly,davis2018subgradient}.
In particular, Rolewicz \cite{rolewicz1979paraconvex} demonstrated that \(1\)-paraconvexity of $h$ is equivalent to the convexity of the perturbed function \(x \mapsto h(x) + \rho \|x\|^2\) for some \(\rho \geq 0\), while this property does not hold for \(\nu \in (0,1)\), providing a counterexample in \cite{rolewicz1979gamaparaconvex}. Nevertheless, we establish that if the function \(x \mapsto f(x) + \rho \|x\|^{1+\nu}\) is convex for some \(\rho \geq 0\), then \(f\) qualifies as \(\nu\)-paraconvex.
These functions admit useful properties such as local Lipschitzness, subdifferentiability, and subdifferential monotonicity  \cite{jourani1996open,jourani1996subdifferentiability,van2008paraconvex}.

In absence of the strong convexity of the objective function, studying convergence under {\it error bound} conditions has a long history \cite{bolte2007lojasiewicz,bolte2017error,burke2002weak,burke1993weak,davis2018subgradient,ferris1991finite,johnstone2020faster,li2013global,luo1993error,pang1997error,xu2016accelerate,zhang2020new,zhou2017unified}. In particular, the {\it H\"olderian error bound} with order $\delta\in (0,1]$ provides an upper bound on the distance to the solution set in terms of the residual, with special cases including sharpness error bound (\(\delta=1\)), \cite{burke2002weak,burke1993weak,davis2018subgradient,polyak1979sharp}, and quadratic growth condition (\(\delta=\tfrac{1}{2}\)), \cite{karimi2016linear,zhang2020new}. Under these assumptions, PSMs with carefully chosen step-sizes can achieve linear convergence in certain settings \cite{davis2018subgradient}. Our work studies convergence rates of PSMs (with several different step-sizes) for $\nu$-paraconvex functions under H\"olderian error bounds.

\vspace{-3mm}
\subsection{{\bf Contribution}}
Our primary goal is to extend the applicability of PSMs to a broader class of nonconvex, nonsmooth optimization problems, particularly focusing on paraconvex functions.
We aim to provide comprehensive theoretical foundations to investigate convergence rates of the PSMs for such a class of functions and practical insights into this area. Our contribution has threefold, which we describe as follows:

\begin{itemize}
    \item[(i)] 
        {\bf Characterization of the class of paraconvex functions.} We first describe the class of paraconvex functions as a generalization of the class of weakly convex functions.
        Then, we verify that while preserving the concept of paraconvexity, the term 
        \(\min\{\lambda,1-\lambda\}\) can be removed from \cref{eq:para000}.
        Next, it is shown that the continuous midpoint $\nu$-paraconvexity implies $\nu$-paraconvexity, facilitating the assessment of paraconvexity. We particularly demonstrate a connection to a local version of $\nu$-paraconvex functions, known as $\nu$-weakly convex functions (see \cite{daniilidis2005filling}), coincide with $\nu$-paraconvex functions on compact convex sets.
        In addition, we establish the local Lipschitzness of such functions, ensuring the nonemptiness of their Clarke subdifferential.
        As a wider class than weakly convex functions, they accept saddle points. As such, under H\"{o}lderian error bound, we identify a region around the optimal set containing no saddle points.
    \item[(ii)] 
        {\bf Convergence rate of PSMs with several step-sizes.} We describe a generic framework for PSMs for paraconvex functions under the H{\"o}lderian error bound condition. In this setting, we study the convergence behavior of this subgradient method for several different step-sizes, such as constant, nonsummable diminishing, square-summable but not summable, geometrically decaying, and Scaled Polyak's step-sizes. We demonstrate recurrences to establish upper bounds for the sequence of iterations (i.e., $\{\dist(x^k,\mathcal{X}^*)\}_{k\in\N}$) and function values (i.e., $\{f(x^k)-f^*\}_{k\in\N}$) residuals. It is shown that: (i) for the PSM with {\it constant step-size}, $\{\dist(x^k,\mathcal{X}^*)\}_{k\in\N}$ is linearly convergent up to a specified tolerance; (ii) for the PSMs with nonsummable diminishing and square-summable but not summable step-sizes, the subsequential and global convergence are established, respectively; (iii) for the PSMs with geometrically decaying and Scaled Polyak's step-sizes, $\{\dist(x^k,\mathcal{X}^*)\}_{k\in\N}$ is linearly convergent.
    \item[(iii)] 
        {\bf Applications to low-rank robust matrix recovery.} We introduce a class of robust low-rank matrix recovery appearing in several applications such as robust matrix completion, image inpainting, robust nonnegative matrix factorization, robust matrix compression, and robust image deblurring. We apply the proposed PSM to these applications, and the results indicate the promising behavior of these algorithms.  To the best of our knowledge, this is the first theoretical and numerical appearance of the PSM with Scaled Polyak's step-size in the literature.
\end{itemize}

\vspace{-6mm}
\subsection{{\bf Organization}}

The remainder of the paper is organized as follows.
\Cref{sec:Preliminaries} introduces the notations and reviews foundational concepts.
In \Cref{sec:paraconvex}, we analyze the class of $\nu$-paraconvex functions and explore their fundamental properties.
\Cref{sec:subGradMethod} focuses on PSMs for $\nu$-paraconvex functions satisfying the H{\"o}lderian error bound condition, deriving key inequalities.
In \Cref{sec:subGradMethodWithConstantstep-size}, we investigate PSM with constant step-size.
\Cref{sec:subGradMethodWithDiminishingstep-size} discusses PSMs with diminishing step-size strategies.
\Cref{sec:Polyak subgradient method2} examines PSMs using Scaled Polyak's step-size.
In \Cref{sec:NumRes}, we present preliminary numerical results on robust low-rank matrix recovery validating our theoretical foundations.
Finally,  \Cref{sec:Conclusion} delivers our concluding remarks.


\section{Preliminaries}\label{sec:Preliminaries}

\subsection{{\bf Notation}}

In this paper, we denote the standard {\it inner product} and the {\it Euclidean norm} in $n$-dimensional real {\it Euclidean space}
$\R^n$ by $\langle \cdot,\cdot\rangle$ and 
$\|\cdot\|=\sqrt{\langle \cdot, \cdot\rangle}$, respectively.
The set of {\it natural numbers} is denoted by $\N$. The notion $x^T$ represents the {\it transpose} of a vector $x\in \mathbb{R}^n$. For a real $m \times n$ matrix $C = [c_{ij}] \in \mathbb{R}^{m\times n}$ the {\it Frobenius norm} is $\Vert C\Vert =\sqrt{\sum_{i,j} \vert c_{ij}\vert^2}$.
The open ball with center $x\in\R^n$ and radius $r>0$ is expressed as $\mathbb{B}(x;r)$.
The {\it interior} and {\it closure} of a set $S\subseteq \mathbb{R}^n$ are indicated by 
$\interior S$ and $cl\, S$, respectively.
The {\it Euclidean distance} from a point $x\in\R^n$ to a nonempty set $S\subseteq \mathbb{R}^n$ is defined as $\dist(x;S)=\inf_{z\in S} \|z-x\|$. Moreover, the {\it Euclidean projection} of the point $x$ onto $S$ is given by
$\operatorname{proj}_S(x):=\argmin_{z\in S} \|z-x\|$.
For a given function $\func{h}{\R^n}{\Rinf=\R\cup\{+\infty\}}$, the {\it effective domain} of
$h$ is defined as $\dom(h) := \{x \in \R^n : h(x) < +\infty\}$.
The function $h$ is said to be {\it proper} if $\dom(h) \neq \emptyset$.
The {\it indicator} function of a nonempty set $S\subseteq \mathbb{R}^n$, $\func{\delta_S}{\R^n}{\Rinf}$, is defined as $\delta_{S}(x)=0$ if $x\in S$, and $\delta_{S}(x)=+\infty$ otherwise.

Let $\func{h}{\R^n}{\Rinf}$ be a proper function that is locally Lipschitz at $\bar x\in \interior\dom(h)$.
The {\it generalized directional derivative} of $h$ at $\bar{x}$ in the direction $d$ is given by
$h^\circ(\bar{x};d):=\limsup_{\stackrel{x\rightarrow \bar{x}}{t\downarrow 0}}
\frac{h(x+td)-h(x)}{t}$.
Moreover, the {\it Clarke's subdifferential} of $h$ at $\bar{x}$ \cite{clarke1990optimization} is defined as
$\partial h(\bar{x}) := \left\{\zeta \in \R^n : \langle \zeta,d\rangle \leq h^\circ(\bar{x};d), \forall d\in \R^n\right\}$.
Given a nonempty set $S\subseteq \interior\dom(h)$, a point $\bar{x}\in S$ is said to be a {\it stationary} point of the problem $\displaystyle\min_{x\in S} h(x)$ if
$0\in \partial (h+\delta_S)(\bar{x})$.

To derive the convergence rate bounds of our algorithms, we use the following fact from \cite{aragon2018accelerating}.

\begin{fact}\label{lem-sequence}
    Let ${s_{k}}$ be a nonnegative real sequence and let $\varsigma,\varpi$ be some positive constants.
    Suppose that $s_{k}\to 0$ and that the sequence satisfies
    $s^{\varsigma}_{k}\leq \varpi(s_{k} - s_{k+1}),$
    for all $k$ sufficiently large. Then, the following assertions hold:
    \vspace{-4mm}
    \begin{enumerateq}
        \item
            If $\varsigma = 0$, the sequence ${s_{k}}$ converges to 0 in a finite number of steps;
        \item
            If $\varsigma \in (0, 1]$, the sequence ${s_{k}}$ converges linearly to 0 with rate $1-\nicefrac{1}{\varpi}$;
        \item
            If $\varsigma > 1$, there exists $\eta>0$ such that $s_{k} \leq \eta k^{-\tfrac{1}{\varsigma-1}}$, for all $k\in \N$ sufficiently large.
    \end{enumerateq}
\end{fact}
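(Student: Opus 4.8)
The plan is to treat the three regimes separately, after two preliminary observations that hold in all cases. First, since $s_k^\alpha\ge 0$ and $\beta>0$, the hypothesis $s_k^\alpha\le\beta(s_k-s_{k+1})$ forces $s_k-s_{k+1}\ge 0$, so the sequence is eventually nonincreasing. Second, if $s_k=0$ for some index in the range where the inequality holds, then the same inequality gives $-\beta s_{k+1}\ge 0$, hence $s_{k+1}=0$, and the sequence is eventually identically zero; in that degenerate situation every assertion is trivial. I may therefore assume $s_k>0$ for all large $k$, say $k\ge k_0$.

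For the case $\alpha=0$, the inequality reads $1\le\beta(s_k-s_{k+1})$, i.e.\ $s_k-s_{k+1}\ge 1/\beta$, so each step decreases $s_k$ by at least the fixed amount $1/\beta$. A nonnegative sequence cannot decrease by a fixed positive amount indefinitely, so it must reach $0$ after finitely many steps. For the case $\alpha\in(0,1]$, I would use that $s_k\to 0$ implies $s_k\le 1$ for large $k$, whence $s_k\le s_k^\alpha$ because $t\mapsto t^\alpha$ dominates the identity on $[0,1]$ when $\alpha\le 1$. Inserting this into the hypothesis yields $s_k\le\beta(s_k-s_{k+1})$, which rearranges to $s_{k+1}\le(1-\tfrac1\beta)s_k$; this is exactly linear convergence with rate $1-\tfrac1\beta$ (the rate lying in $(0,1)$ precisely when $\beta>1$, while $\beta\le 1$ would force finite termination).

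The case $\alpha>1$ is the main obstacle and requires a change of variable. Setting $p:=\alpha-1>0$, I rewrite the hypothesis as $s_{k+1}\le s_k\bigl(1-\tfrac1\beta s_k^{p}\bigr)$. Because $s_k\to 0$, the factor $t_k:=\tfrac1\beta s_k^{p}$ lies in $[0,1)$ for all large $k$, so I may raise to the power $-p$ and apply the convexity (Bernoulli-type) inequality $(1-t)^{-p}\ge 1+pt$, valid for $t\in[0,1)$ since $t\mapsto(1-t)^{-p}$ is convex and therefore lies above its tangent line $1+pt$ at $t=0$. This produces $s_{k+1}^{-p}\ge s_k^{-p}(1+pt_k)=s_k^{-p}+\tfrac{p}{\beta}$, so the sequence $s_k^{-p}$ increases by at least $p/\beta$ per step. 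Telescoping from $k_0$ gives $s_k^{-p}\ge\tfrac{p}{\beta}(k-k_0)$, hence $s_k\le\bigl(\tfrac{\beta}{p(k-k_0)}\bigr)^{1/p}$; absorbing the shift $k_0$ into the constant (using $k-k_0\ge k/2$ for $k$ large) yields $s_k\le\eta\,k^{-1/(\alpha-1)}$ with $\eta=(2\beta/p)^{1/p}$. The delicate points are verifying that $t_k<1$ eventually so that the inequality is applicable, and the convexity estimate itself, which is what converts the multiplicative recursion into an additive one amenable to telescoping.
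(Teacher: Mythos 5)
Your proof is correct in all three cases. One point of context: the paper does not actually prove this statement --- it is imported verbatim as a known fact, with a citation to the reference \cite{aragon2018accelerating}, so there is no in-paper argument to compare against. Your write-up is therefore a valid self-contained replacement for that citation. Your preliminary reductions (eventual monotonicity, and absorption at zero) are sound, the case $\alpha\in(0,1]$ via $s_k\le s_k^{\alpha}$ for $s_k\le 1$ is exactly the standard argument, and the case $\alpha>1$ is handled correctly: from $s_{k+1}\le s_k\bigl(1-\tfrac{1}{\beta}s_k^{p}\bigr)$ with $p=\alpha-1$, the convexity bound $(1-t)^{-p}\ge 1+pt$ on $[0,1)$ gives $s_{k+1}^{-p}\ge s_k^{-p}+\tfrac{p}{\beta}$, and telescoping yields the $O\bigl(k^{-1/(\alpha-1)}\bigr)$ rate. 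For comparison, the proof in the cited literature (and the classical Attouch--Bolte-style argument) reaches the same additive recursion by a slightly different device: from $\tfrac{1}{\beta}\le s_k^{-\alpha}(s_k-s_{k+1})$ one bounds the right-hand side by $\int_{s_{k+1}}^{s_k}t^{-\alpha}\,dt=\tfrac{1}{\alpha-1}\bigl(s_{k+1}^{1-\alpha}-s_k^{1-\alpha}\bigr)$, using that $t^{-\alpha}$ is decreasing. Both routes convert the multiplicative contraction into linear growth of $s_k^{-(\alpha-1)}$; yours trades the integral comparison for a Bernoulli-type inequality, which is equally elementary. Two cosmetic remarks: in the degenerate case $\alpha=0$ your absorption-at-zero observation implicitly uses the convention $0^{0}=0$ (otherwise the hypothesis is simply incompatible with $s_k=0$, and the conclusion still holds); and in the telescoping step the starting index should be the threshold beyond which both the hypothesis holds and $t_k<1$, which your $k_0$ can be taken to be.
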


\vspace{-4mm}
\subsection{{\bf H\"{o}lderian error bound condition}}

An error bound is a crucial tool in optimization to achieve faster convergence rates in algorithms. In this study, we focus on a specific type of error bound known as the H\"{o}lderian error bound.

\begin{defin}\label{holerrbound}
    A proper function $\func{h}{\R^n}{\Rinf}$ is said to admit a H\"{o}lderian error bound (HEB) of order $\delta \in (0,1]$ with constant $\mu > 0$ on a nonempty set $S \subseteq \dom(h)$ if
    \begin{equation}\label{eq-holerrbound}
        \mu \dist^{\nicefrac{1}{\delta}}(x; \mathcal{X}^*) \leq h(x) - h^{*}, \quad \forall x \in S,
    \end{equation}
    where $h^* := \inf_{x \in S} h(x)$ and the set of minimizers $\mathcal{X}^* := \{x \in S \,:\, h(x) = h^{*}\}$ is nonempty.
    \end{defin}

In the literature, this property is also known by other names, such as the H\"{o}lderian growth condition, the H\"{o}lder growth bound, the H\"{o}lder error bound and the global H\"{o}lderian error bound; see, e.g., \cite{johnstone2020faster,grimmer2023general}. The HEB is instrumental in ensuring that PSMs with appropriate step-sizes achieve linear convergence, even for nonconvex problems \cite{davis2018subgradient}.
Two notable cases of the HEB arise when $\delta = 1$ and $\delta = \tfrac{1}{2}$.
For $\delta = 1$, the condition is known as the sharpness error bound or the weak sharpness condition \cite{burke1993weak,polyak1979sharp}, while for $\delta = \tfrac{1}{2}$, it is known as the quadratic growth condition \cite{karimi2016linear,zhang2020new}.

Beyond the HEB (including the sharpness error bound and quadratic growth condition), other prominent properties have been studied, such as the {\L}ojasiewicz inequality \cite{bolte2007lojasiewicz}, the Kurdyka-{\L}ojasiewicz (KL) inequality \cite{bolte2017error}, and (H\"{o}lder) metric subregularity/regularity \cite{kruger2015error}.
The {\L}ojasiewicz inequality can be considered a version of the HEB, where the inequality \cref{eq-holerrbound} holds on each compact subset of an open set $S$.
Furthermore, it has been established that the HEB and the KL inequality are equivalent for convex, closed, and proper functions \cite{bolte2007lojasiewicz,bolte2017error}.
On the other hand, H\"{o}lder metric subregularity represents a localized version of the HEB. Additionally, \cite[Proposition 3.3]{garrigos2023convergence} showed that the HEB of order $\delta$ implies H\"{o}lder metric subregularity of $\partial f$ in the convex setting.

\vspace{-2mm}
\section{Fundamental properties of paraconvex functions and optimization}\label{sec:paraconvex}
This section introduces the class of $\nu$-paraconvex functions and describes some of their key properties, which are necessary for the coming sections.

\vspace{-2mm} 

\subsection{{\bf Fundamental properties of paraconvex functions}}\label{sec:paraconvexFunc}
Let us begin with the definition of $\nu$-paraconvexity.

\begin{defin}\label{def:paraconvex}
    A proper function $\func{h}{\R^n}{\Rinf}$ is said to be $\nu$-paraconvex function on a convex set $S\subseteq \dom(h)$ for some $0<\nu\leq 1$ if there exists $\rho > 0$ such that for any $x,y \in S$ and $\lambda \in [0,1]$,
    \begin{equation}\label{eq:para1}
        h(\lambda x+(1-\lambda)y)\leq \lambda h(x) +(1-\lambda) h(y) + \rho \min\{\lambda , 1-\lambda\} \Vert x-y\Vert ^{1+\nu}.
    \end{equation}
\end{defin}

The following result shows that the notion of $\nu$-paraconvexity is unchanged if the factor $\min\{\lambda,1-\lambda\}$ is omitted from the inequality in \cref{eq:para1}. Nevertheless, this factor plays a useful role in \cref{eq:para1}, as it ensures equality at the endpoints $\lambda=0$ and $\lambda=1$.

\begin{prop}[\textbf{Equivalent definitions of $\nu$-paraconvexity}]\label{pro:paraequivalent}
    Let $\func{h}{\R^n}{\Rinf}$ be a proper function, $S\subseteq \dom(h)$ be a convex set, and $\nu\in(0,1]$. The following assertions are equivalent:
    \begin{enumerate}[label=(\alph*)]
        \item\label{pro:paraequivalent-1}
            There exists a constant $C> 0$ such that for any $x, y \in S$ and $\lambda \in [0,1]$,
            \begin{equation}\label{eq:para2}
            h(\lambda x+(1-\lambda)y) \leq \lambda h(x) + (1-\lambda) h(y) + C \lambda(1-\lambda)\Vert x-y\Vert ^{1+\nu};
            \end{equation}
        \item\label{pro:paraequivalent-2}
            The function $h$ is $\nu$-paraconvex on $S$;
        \item\label{pro:paraequivalent-3}
            There exists a constant $C> 0$ such that for any $x, y \in S$ and $\lambda \in [0,1]$,
            \begin{equation}\label{eq:para3}
            h(\lambda x+(1-\lambda)y) \leq \lambda h(x) + (1-\lambda) h(y) + C \Vert x-y\Vert ^{1+\nu}.
            \end{equation}
    \end{enumerate}
\end{prop}
\begin{proof}
    The implication \ref{pro:paraequivalent-1} $\Rightarrow$ \ref{pro:paraequivalent-2} $\Rightarrow$ \ref{pro:paraequivalent-3}
    follows directly from the definition of $\nu$-paraconvexity and the fact that $\lambda(1-\lambda) \le \min\{\lambda, 1-\lambda\}\le 1$. We next show that \ref{pro:paraequivalent-3} implies \ref{pro:paraequivalent-1}. Assume that \ref{pro:paraequivalent-3} holds.
    Fix arbitrary points $x, y \in S$ and $\lambda \in [0,1]$.
    Without loss of generality, we suppose that $0\le \lambda\leq \tfrac{1}{2}$.
    Let us define the function $\func{\varphi}{[0,1]}{\R}$ by
    \(\varphi(t):=h(z_t)-th(x)-(1-t)h(y)\) in which $z_t=tx+(1-t)y$. Then, the argument reduces to showing that there exists some $\overline{C}> 0$ such that
    \(\varphi(\lambda)\leq  \overline{C}\lambda(1-\lambda)\|x-y\|^{1+\nu}\).
    By \cref{eq:para3}, $\varphi(0)=0$ and \(\varphi(\tfrac{1}{2})\leq C\|x-y\|^{1+\nu} \le \tfrac{\overline{C}}{4}\|x-y\|^{1+\nu}\) for some $4C\le \overline{C}$. Assume that $\lambda\in (0,\tfrac{1}{2}]$. Then, there exists \(s\in \N\) such that $\lambda\in [2^{-s},2^{-s+1})$ having the expression \(\lambda=\mu 2^{-s} + (1-\mu) 2^{-s+1}\) for some $\mu\in (0,1]$.
    We claim that, for all $k\in\N$,
    \begin{align}\label{pro:eq:paraequivalent}
        \varphi(\tfrac{1}{2^k})\leq \tfrac{2^{1+\nu}}{2^{k}}C\|x-y\|^{1+\nu} \sum_{i=1}^k (\tfrac{1}{2^{\nu}})^{i}.
    \end{align}
    The above inequality is valid for $k=1$. By induction, we assume that it holds for $k\in \N$. Using \cref{eq:para3}, for each $t\in [0,1]$, we get
    \[
    \varphi(\tfrac{t}{2}) = h\left(\tfrac{z_{t}+y}{2}\right)-
    \tfrac{t}{2}h(x) -(1-\tfrac{t}{2})h(y)
    \leq \tfrac{1}{2} h(z_t) + \tfrac{1}{2} h(y) +Ct^{1+\nu}\|x-y\|^{1+\nu} - \tfrac{t}{2}h(x) -(1-\tfrac{t}{2})h(y)
    = \tfrac{1}{2} \varphi(t)  + Ct^{1+\nu}\|x-y\|^{1+\nu}.
    \]
    Applying this inequality with $t=2^{-k}$ and invoking the induction hypothesis yields
    \[
    \varphi(\tfrac{1}{2^{k+1}}) = \varphi\Big(\tfrac{\tfrac{1}{2^k}}{2}\Big)
    \le \tfrac{1}{2}\varphi(\tfrac{1}{2^{k}}) + \frac{C}{2^{k(1+\nu)}}\|x-y\|^{1+\nu}
    \le \bigg(\tfrac{2^{1+\nu}}{2^{k+1}} \sum_{i=1}^k (\tfrac{1}{2^{\nu}})^{i}
    + \frac{1}{2^{k(1+\nu)}}\bigg)C\|x-y\|^{1+\nu} =
    \tfrac{2^{1+\nu}}{2^{k+1}}C\|x-y\|^{1+\nu} \sum_{i=1}^{k+1} (\tfrac{1}{2^{\nu}})^{i},
    \]
    which completes the induction.\\
    Finally, using the inequality \cref{eq:para3} and the above bound, we obtain
    \begin{align*}
        \varphi(\lambda) &= \varphi\big(\mu \tfrac{1}{2^s} + (1-\mu)\tfrac{1}{2^{s-1}}\big) = h\big(\mu z_{2^{-s}} + (1-\mu) z_{2^{1-s}}\big) -\lambda h(x) - (1-\lambda) h(y)\\
        &\le \mu h(z_{2^{-s}}) + (1-\mu) h(z_{2^{1-s}}) + C\| z_{2^{-s}}- z_{2^{1-s}}\|^{1+\nu}  -\lambda h(x) - (1-\lambda) h(y)\\
        &= \mu \varphi(\tfrac{1}{2^s}) + (1-\mu) \varphi(\tfrac{1}{2^{s-1}}) + \tfrac{C}{2^{s(1+\nu)}}\|x-y \|^{1+\nu}
        \leq
        \bigg(\mu\tfrac{2^{1+\nu}}{2^{s}} \sum_{i=1}^s (\tfrac{1}{2^{\nu}})^{i} + (1-\mu)
        \tfrac{2^{1+\nu}}{2^{s-1}} \sum_{i=1}^{s-1} (\tfrac{1}{2^{\nu}})^{i} + \tfrac{1}{2^{s(1+\nu)}}\bigg)C\|x-y \|^{1+\nu}\\
        &\leq \bigg( \mu 2^{1+\nu} \sum_{i=1}^s (\tfrac{1}{2^{\nu}})^{i} + (1-\mu)
        2^{2+\nu} \sum_{i=1}^{s-1} (\tfrac{1}{2^{\nu}})^{i} + \tfrac{1}{2^{s\nu}}\bigg)\lambda C\|x-y \|^{1+\nu} \leq \overline{C} \lambda(1-\lambda) \|x-y \|^{1+\nu},
    \end{align*}
    where $\overline{C}:=\Big(\tfrac{2^{3+\nu}}{2^\nu -1}+2^{1-\nu}\Big)C$.
    This verifies \cref{eq:para2} and completes the proof.
\end{proof}

\begin{rem}
    To the best of our knowledge, Rolewicz \cite{rolewicz1979gamaparaconvex,rolewicz1979paraconvex} pioneered the study of paraconvexity, introducing this concept for the class of multi-valued mappings.
    This property reduces to convexity when $\nu>1$, due to \cite[Proposition 7]{rolewicz2000alpha}.
    Furthermore, several important properties and characterizations, discussed later in this section, hold when $0<\nu\le 1$.
\end{rem}

The next proposition states that continuous midpoint $\nu$-paraconvex functions are $\nu$-paraconvex, which result simplifies the verification of paraconvexity.

\begin{prop}[{\bf Midpoint paraconvexity}]\label{pro:midpoint}
    Let $\func{h}{\R^n}{\Rinf}$ be a proper continuous function on a nonempty convex set $S\subseteq \dom(h)$.
    Given some $\nu\in (0,1]$, the function $h$ is $\nu$-paraconvex on $S$ if and only if it is midpoint $\nu$-paraconvex on $S$, that is, there exists the constant $\rho> 0$ such that
    \begin{equation}\label{eq:midpoint:para1}
        h\bigg(\frac{x+y}{2}\bigg) \le \frac{1}{2}h(x) +\frac{1}{2}h(y) + \frac{\rho}{2} \|x-y\|^{1+\nu}, \quad\quad\forall x,y \in S.
    \end{equation}
\end{prop}

\begin{proof}
    The necessity follows directly from \cref{def:paraconvex}.
    To justify sufficiency, we assume a constant $\rho> 0$ exists such that  \cref{eq:midpoint:para1} holds.
    Let $x,y\in S$.
    It is clear that the set $\Lambda:=\big\{\frac{p}{2^k} : k,p\in \N, p\le 2^k\big\}$ is dense in $[0,1]$.
    Thus, it suffices to verify that the inequality \cref{eq:para1} holds for each $\lambda\in \Lambda$.
 In particular, we show by induction that for each $k\in \N$, it holds that
    \begin{equation}\label{eq:midpoint:para2}
        h\bigg(\frac{px+qy}{2^k}\bigg) \le \frac{p}{2^k}h(x) +\frac{q}{2^k}h(y) + \rho\bigg(\sum_{i=1}^k \frac{1}{2^i}\bigg) \|x-y\|^{1+\nu},
    \end{equation}
    for all $p,q\in \N$ such that $p+q=2^k$.
    For $k=1$, this inequality holds thanks to \cref{eq:midpoint:para1}.
    Suppose \cref{eq:midpoint:para2} is satisfied for some $k\in \mathbb{N}$.
    We show it for $k+1$.
    Let $p,q\in \mathbb{N}$ such that $p+q=2^{k+1}$.
    If $p=q=2^k$, then there is nothing to prove.
    Without loss of generality, assume that $p>2^k>q$, i.e.,
    \begin{align*}
         h\bigg(\frac{px+qy}{2^{k+1}}\bigg) 
         &= h\Bigg(\frac{\frac{(p-2^k)x+qy}{2^{k}}+x}{2}\Bigg)
         \le \frac{1}{2}h\bigg(\frac{(p-2^k)x+qy}{2^{k}}\bigg) +\frac{1}{2}h(x) + \frac{\rho}{2} \left\|\frac{(p-2^k)x+qy}{2^{k}}-x\right\|^{1+\nu}\\
         &\leq \frac{p}{2^{k+1}}h(x) +\frac{q}{2^{k+1}}h(y) +
         \rho\bigg(\sum_{i=1}^{k+1} \frac{1}{2^i}\bigg) \|x-y\|^{1+\nu},
    \end{align*}
    which comes from the inductive assumption, completing the inductive step.
    Now, on the basis of the facts that $\Lambda$ is dense in $[0,1]$, $\sum_{i=1}^{\infty} \frac{1}{2^i}=1$, and $h$ is a continuous function, it is concluded that
    $$h(\lambda x+(1-\lambda)y) \le \lambda h(x) +(1-\lambda)h(y) + \rho \|x-y\|^{1+\nu},$$
    for each $\lambda\in [0,1]$. Thus, the function $h$ is $\nu$-paraconvex on $S$ by \Cref{pro:paraequivalent}, completing the proof.
\end{proof}

In the following, we provide some notable examples of the class of $\nu$-paraconvex functions.

\begin{exa}[{\bf Known paraconvex functions}]\label{exa:para} We next mention several classes of paraconvex functions. 
    \begin{enumerate}[label=(\alph*)]
        \item\label{exa:para-1}
            One notable example of $\nu$-paraconvex functions lies in the class of composite optimization problems:
            \begin{equation}\label{eq:compositeporblem}
                \min_{x\in S} h(x):=\varphi(g(x)),
            \end{equation}
            where $S$ is a convex set, $\varphi:\R^n\to \R$ is a convex and Lipschitz function with constant $L_{0}>0$ on $S$, and the Jacobian of $g:\R^m\to \R^n$, $J(x)$, is H\"{o}lder continuous of order $\nu\in (0,1]$ with constant $L_{1}>0$ on $S$:
            $$\|J(x)- J(y) \|\leq L \|x-y\|^{\nu},~~~\forall x,y\in S.$$
            It can be shown that the function $h=\varphi \circ g$ is $\nu$-paraconvex with constant $\tfrac{2^{1-\nu}L_{0}L_{1}}{1+\nu}$ on $S$; see \cref{pro:para-comp}.
        \item\label{exa:para-2}
            Any smooth function $\func{h}{\R^n}{\R}$ whose gradient is H\"{o}lder continuous of order $\nu\in (0,1]$ with constant $L>0$ on a convex set $S$, is $\nu$-paraconvex function with constant $\tfrac{2^{1-\nu}L}{1+\nu}$ on $S$.
        \item\label{exa:para-3}
            In robust principal component analysis (PCA), the objective is to identify sparse corruptions in a low-rank matrix. Given an $m\times n$ matrix $A$, the goal is to decompose it as $A=UV^T$ where $U\in \R^{m\times r}$ and $V\in \R^{n\times r}$ are low-rank factor matrices with $r$ denoting the desired rank. This problem can be formulated as:
            $$\min \|UV^T - A\|_1 ~~\text{s.t.}~~ U\in \R^{m\times r}, V\in \R^{n\times r}.$$
            The Jacobian of the function $(U,V)\mapsto UV^T - A$ is Lipschitz continuous with constant $L_1=1$.
            Therefore, the objective function, which fits into the composite form \cref{eq:compositeporblem}, is $1$-paraconvex by \Cref{pro:para-comp}; see also \cite{drusvyatskiy2017proximal}. Moreover, since the function is semialgebraic, it satisfies the HEB locally; see \cite[Theorem 6.5]{bierstone1988semianalytic}. In addition, by incorporating a suitable regularization term of the form \(\|U^TU-V^TV\|_F\), the resulting objective function \(\tfrac{1}{m}\|UV^T - A\|_1+\lambda\|U^TU-V^TV\|_F\) satisfies the HEB of order $\delta=1$ and remains $1$-paraconvex function; see \cite[Propositions 5 and 6]{li2020nonconvex}.
        \item\label{exa:para-4}
            As another example, consider the Robust Phase Retrieval problem; see \cite{davis2020nonsmooth,duchi2019solvingg,eldar2014phase}. While the classical formulation targets squared magnitude conditions, we extend this to the more flexible $p$-power regime for $p \in (1, 2]$ by seeking a solution $x$ that satisfies $|\langle a_i, x \rangle|^p \approx b_i$ for $i=1, \dots, m$, where $\{a_i\}_{i=1}^{m}\subseteq\R^n$ and $\{b_i\}_{i=1}^{m}\subseteq\R$. This leads to the Robust generalized Phase Retrieval problem as follows
            \[
            \min_{x\in\mathbb{R}^n} \; \frac{1}{m}\sum_{i=1}^m \Big||\langle a_i,x\rangle|^p - b_i\Big|,
            \]
            a formulation that serves as a robust alternative to the standard least-squares approach. By choosing $1<p \le 2$, the model might become resilient to impulsive noise and heavy-tailed outliers often encountered in real-world sensor data.
            From an optimization perspective, this objective is of composite form \cref{eq:compositeporblem}, is $(p-1)$-paraconvex and semialgebraic, satisfying a H\"{o}lderian error bound.
        \item\label{exa:para-5} 
            The deployment of regularizers, ranging from $\ell_1$- and $\ell_2$- norms to nonconvex and weakly convex penalties, has attracted significant attention in optimization. In applications such as signal processing and machine learning, these penalties are essential for promoting sparsity, ensuring coercivity, and providing robustness against noise. In some cases, it has been known that nonconvex regularizers like the Minimax Concave Penalty (MCP) and SCAD have proven effective in achieving nearly unbiased sparse recovery \cite{chen2014convergence,liu2018robust,shen2016square,wen2017efficient,yang2019weakly}. In this work, we introduce a specialized penalty, the Generalized MCP (GMCP), defined as
            $$r(t) = 
            \begin{cases} 
            (1+\nu)\omega |t| - \omega^{1+\nu} |t|^{1+\nu}, \hspace*{1cm}& \text{if }\, |t| \le \frac{1}{\omega}, \\[2mm]
            \nu, & \text{if }\, |t| > \frac{1}{\omega},
            \end{cases}$$
            where $\nu \in (0,1]$ and the scaling parameter $\omega > 0$; see \Cref{fig:MPC}. From a theoretical standpoint, this function  is $\nu$-paraconvex with a constant $\rho = \omega^{1+\nu}$ by \cref{thm:S-chara}~\ref{thm:S-chara-2}. Consequently, due to \cref{pro:relWeakConvCal}~\ref{pro:relWeakConvCal-1} for any convex loss function $h: \mathbb{R}^n \to \mathbb{R}$ and any regularization parameter $\lambda > 0$, the composite objective $x \mapsto h(x) + \lambda \sum_{i=1}^{n} r(x_i)$ is globally $\nu$-paraconvex.
            \begin{figure}[!htbp]
                \centering
                \includegraphics[width=0.75\textwidth]{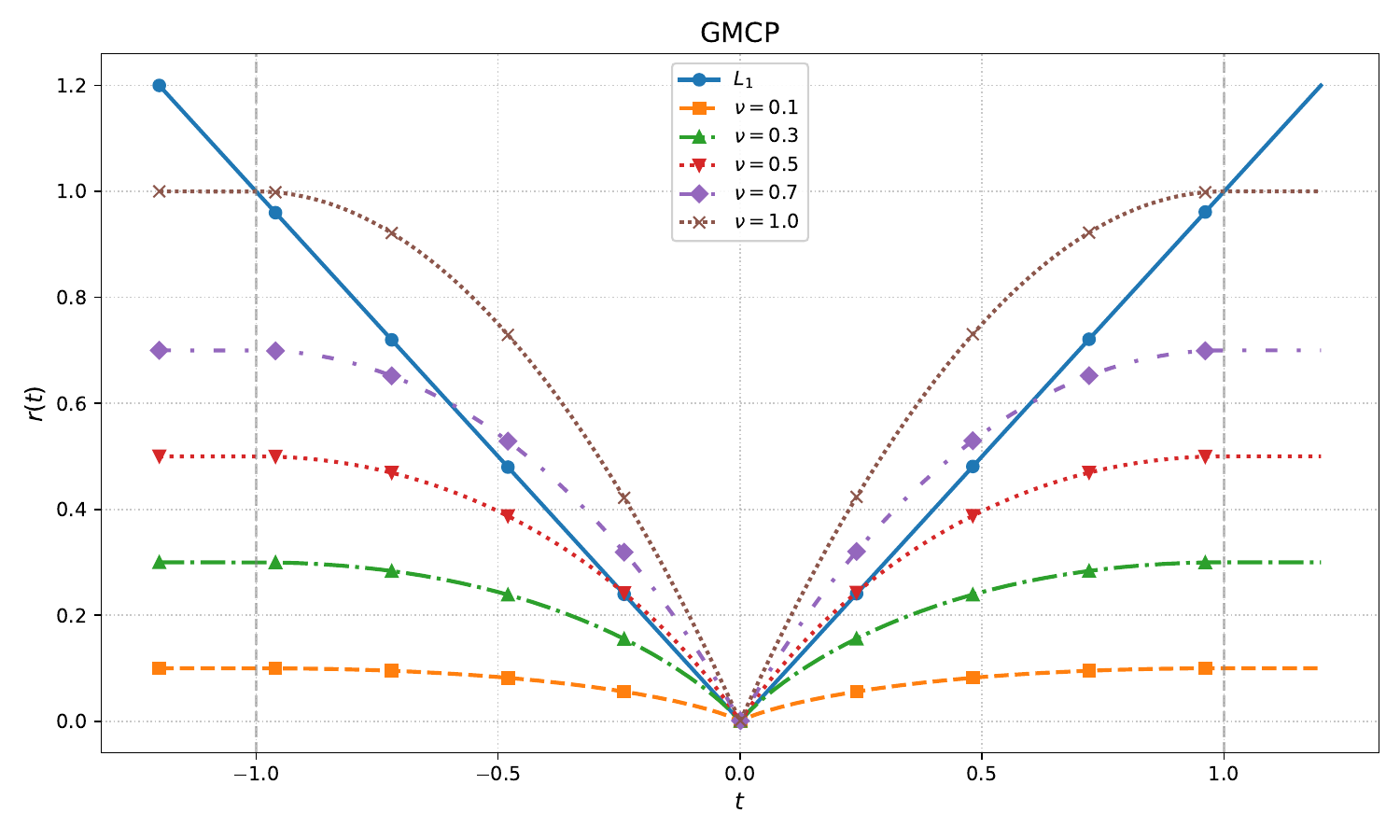}
                \caption{ The $\ell_1$-norm regularization and the GMCP  with $\nu\in\{0.1, 0.3, 0.5, 0.7, 1\}$ and $\omega=1$.}
                \label{fig:MPC}
            \end{figure}
            \qed
    \end{enumerate}
\end{exa}

A further extension of convexity in the nonconvex setting is $\nu$-weak convexity \cite{daniilidis2005filling}.
Given a nonempty open set $\mathcal{U}\subseteq \R^n$ and $0<\nu\leq 1$, a locally Lipschitz function $\func{\varphi}{\mathcal{U}}{\R}$ is said to be $\nu$-weakly convex, if for any $x_0 \in \mathcal{U}$, there exist $\delta>0$ and $C> 0$ such that for all $x,y \in \mathbb{B}(x_0;\delta)$ and $\lambda\in [0,1]$,
\begin{equation*}
    \varphi(\lambda x+(1-\lambda)y)\leq \lambda \varphi(x) +(1-\lambda) \varphi(y) + C \lambda(1-\lambda) \Vert x-y\Vert ^{1+\nu}.
\end{equation*}
This concept is a local version of $\nu$-paraconvexity.
Hence, $\nu$-paraconvexity implies $\nu$-weak convexity.
Furthermore, \cref{thm:weaktopara} demonstrates that the converse holds on compact convex sets.

\begin{lem}\label{lem:parapara}
    Let $\mathcal{I}$ be an interval in $\R$ and $a,b \in I$ such that $a<b$. Let $\func{\varphi}{\mathcal{I}}{\R}$ be a $\nu$-paraconvex function on $\mathcal{I}\cap [a,+\infty)$ and $\mathcal{I}\cap (-\infty,b]$ for some $\nu\in (0,1]$ with constant $\rho> 0$. Then, $\varphi$ is a $\nu$-paraconvex function on $\mathcal{I}$ with constant $6\rho$.
\end{lem}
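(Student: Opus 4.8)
The plan is to reduce to the single genuinely new configuration and then patch the two local estimates together through the overlap $\mathcal{I}\cap[a,b]$. Fix $x,y\in\mathcal{I}$ and $\lambda\in[0,1]$; by the symmetry of \eqref{eq:para3} under $(x,\lambda)\leftrightarrow(y,1-\lambda)$ I may assume $x<y$, and I set $z=\lambda x+(1-\lambda)y\in[x,y]$. If $x\ge a$ then $x,y,z\in\mathcal{I}\cap[a,+\infty)$, and if $y\le b$ then $x,y,z\in\mathcal{I}\cap(-\infty,b]$, so in either case the desired inequality already holds with constant $C\le 3C$. The only remaining case is $x<a<b<y$, which I treat. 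A useful preliminary reduction is that adding a globally affine function leaves the defining inequality \eqref{eq:para3} (and the constant $C$) unchanged on every subinterval; I therefore subtract the secant line through $(x,\varphi(x))$ and $(y,\varphi(y))$ and assume henceforth $\varphi(x)=\varphi(y)=0$. The goal then becomes $\varphi(z)\le 3C\,\lambda(1-\lambda)(y-x)^{1+\nu}=3C\,(y-z)(z-x)(y-x)^{\nu-1}$.

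The heart of the argument is to control the two anchor values $\varphi(a)$ and $\varphi(b)$ at the endpoints of the overlap. Since $x<a<b$, the point $a$ is a convex combination of $x$ and $b$, both lying in $\mathcal{I}\cap(-\infty,b]$; applying \eqref{eq:para3} there gives $\varphi(a)\le p\,\varphi(b)+P$ with $p=\tfrac{a-x}{b-x}$ and $P=C(b-a)(a-x)(b-x)^{\nu-1}$ (using $\varphi(x)=0$). Symmetrically, $b$ is a convex combination of $a$ and $y$ in $\mathcal{I}\cap[a,+\infty)$, yielding $\varphi(b)\le q\,\varphi(a)+Q$ with $q=\tfrac{y-b}{y-a}$ and $Q=C(b-a)(y-b)(y-a)^{\nu-1}$. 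Because $p,q\in(0,1)$, this $2\times2$ system decouples: a short computation gives $1-pq=\tfrac{(b-a)(y-x)}{(b-x)(y-a)}>0$, whence $\varphi(a)\le\tfrac{pQ+P}{1-pq}$ and $\varphi(b)\le\tfrac{qP+Q}{1-pq}$. Simplifying, $\varphi(a)\le C(a-x)\tfrac{y-a}{y-x}\big[(y-b)(y-a)^{\nu-1}+(b-x)^{\nu}\big]$, and since $y-a>y-b$ with $\nu-1\le0$, together with $b-x\le y-x$, the bracket is at most $2(y-x)^{\nu}$; hence $\varphi(a)\le 2C(a-x)(y-a)(y-x)^{\nu-1}$, and symmetrically $\varphi(b)\le 2C(b-x)(y-b)(y-x)^{\nu-1}$. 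Both already lie below the target bound evaluated at $z=a$, resp.\ $z=b$.

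Finally, for a general intermediate point I split $[x,y]$ as $[x,a]\cup[a,b]\cup[b,y]$. If $z\in[x,a]$ I write $z$ as a convex combination of $x$ and $b$ inside $\mathcal{I}\cap(-\infty,b]$; if $z\in[b,y]$ I write $z$ in terms of $a$ and $y$ inside $\mathcal{I}\cap[a,+\infty)$; and if $z\in[a,b]$ I write $z$ in terms of $a$ and $b$, which lie in both subintervals. In each case a single application of \eqref{eq:para3} bounds $\varphi(z)$ by a convex combination of $\{0,\varphi(a),\varphi(b)\}$ plus the associated remainder; substituting the anchor estimates from the previous step and collecting terms yields $\varphi(z)\le 3C\,(y-z)(z-x)(y-x)^{\nu-1}$, which is the claim with constant $3C$. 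I expect the main obstacle to be exactly this last verification: for $\nu<1$ the power terms $(b-x)^{\nu}$, $(y-a)^{\nu-1}$, $(b-a)^{\nu-1}$ no longer telescope as in the weakly convex case $\nu=1$, so one must bound them against $(y-x)^{\nu}$ using the monotonicity of $t\mapsto t^{\nu}$ and of $t\mapsto t^{\nu-1}$ and then check that the three contributions — the two scaled anchor bounds plus the single remainder — never exceed $3C$ times the target across all three subcases; tracking these monotone power estimates is the only delicate part.
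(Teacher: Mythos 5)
Your proposal is correct, and it reaches the lemma by a genuinely different route than the paper. Both arguments share the same skeleton — reduce to the only nontrivial configuration $x<a<b<y$ and split according to whether $z=\lambda x+(1-\lambda)y$ lies in $[x,a]$, $[a,b]$, or $[b,y]$ — but the executions differ. The paper works with chained convex combinations ($z=ra+(1-r)x$, $a=sb+(1-s)x$, $b=ty+(1-t)a$, and so on), solves for the barycentric coordinates, and stacks three applications of \eqref{eq:para3}; in its middle case the unknown $\varphi(z)$ even appears on both sides of the resulting inequality and must be solved for. You instead normalize by subtracting the secant line (legitimate, since \eqref{eq:para3} is invariant under adding affine functions), which reduces the claim to $\varphi(z)\le 3C\,(y-z)(z-x)(y-x)^{\nu-1}$ with $\varphi(x)=\varphi(y)=0$, and then extract clean anchor bounds $\varphi(a)\le 2C(a-x)(y-a)(y-x)^{\nu-1}$ and $\varphi(b)\le 2C(b-x)(y-b)(y-x)^{\nu-1}$ from a decoupled $2\times 2$ system of inequalities. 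This is tidier and more symmetric than the paper's bookkeeping, and it makes visible where the constant $3$ comes from: $2$ from the anchors, $1$ from the final remainder term.

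One addendum, since you flagged the last verification as the delicate point and did not carry it out: it does go through, but it needs two elementary facts beyond the monotonicity of $t\mapsto t^{\nu}$ and $t\mapsto t^{\nu-1}$ that you cite. For $z\in[x,a]$ (and symmetrically for $z\in[b,y]$) the remainder is handled by $(b-z)(b-x)^{\nu-1}\le (y-z)(y-x)^{\nu-1}$, which follows from $\tfrac{b-z}{y-z}\le\tfrac{b-x}{y-x}\le\bigl(\tfrac{b-x}{y-x}\bigr)^{1-\nu}$. For $z\in[a,b]$, writing $z=\theta a+(1-\theta)b$, the two anchor contributions sum to $2C(y-x)^{\nu-1}\bigl[\theta(a-x)(y-a)+(1-\theta)(b-x)(y-b)\bigr]$, and bounding this by $2C(y-x)^{\nu-1}(z-x)(y-z)$ uses the concavity of the parabola $t\mapsto(t-x)(y-t)$, not a power estimate; the remainder $C(b-z)(z-a)(b-a)^{\nu-1}$ is then absorbed using the product inequality $(b-z)(z-a)(y-x)\le(b-a)(y-z)(z-x)$ (expanding, the difference of the two sides is a sum of nonnegative terms), combined with $\tfrac{b-a}{y-x}\le\bigl(\tfrac{b-a}{y-x}\bigr)^{1-\nu}$. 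With these two observations every case closes with constant exactly $3C$, so your argument is complete and there is no gap.
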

\begin{proof}
    If $a=\inf \mathcal{I}$ or $b=\sup \mathcal{I}$, then there is nothing to prove.
    Therefore, without loss of generality, assume that $a,b \in \interior \mathcal{I}$.
    Let $x,y \in \mathcal{I}$ with $x<y$, and let $0< \lambda< 1$.
    Set $z=\lambda x+(1-\lambda)y$.
    We aim to show that
    \begin{equation}\label{eq-lem:para4}
        \varphi(z) \leq \lambda \varphi(x) + (1-\lambda) \varphi(y) + 6\rho \min\{\lambda,1-\lambda\} (y-x)^{1+\nu}.
    \end{equation}
    If $a<x$ or $y<b$, the inequality \cref{eq-lem:para4} is trivially valid due to the $\nu$-paraconvexity of $h$ on $\mathcal{I}\cap [a,+\infty)$ and $\mathcal{I}\cap (-\infty,b]$.
    Now, assume that $x<a<b<y$. Three possible cases arise: (i) $z\in (x,a)$; (ii) $z\in [a,b]$; (iii) $z\in (b,y)$.

    In Case~(i), $z\in (x,a)$, there exists $r,s,t \in (0,1)$ such that
    $z=ra+(1-r)x$, $a=sb+(1-s)x$, and $b=ty+(1-t)a$.
    By substituting these relations, we get
    \(a = sty + s(1-t)a + (1-s)x\)
    and \(b = ty + (1-t)sb +(1-t) (1-s) x,\)
    leading to
    \begin{equation}\label{eq:inn}
        a = \tfrac{st}{1-s(1-t)}y + \tfrac{1-s}{1-s(1-t)}x, \quad
        b = \tfrac{t}{1-s(1-t)} y + \tfrac{(1-t)(1-s)}{1-s(1-t)}x, \quad
        z = \left( (1-r) + \tfrac{r(1-s)}{1-s(1-t)} \right)x + \tfrac{rst}{1-s(1-t)}y.
    \end{equation}
    Thus, \(\lambda = (1-r) + \tfrac{r(1-s)}{1-s(1-t)}\).
    Furthermore, the $\nu$-paraconvexity of $\varphi$ on $\mathcal{I}\cap [a,+\infty)$ and $\mathcal{I}\cap (-\infty,b]$ with constant $\rho$ together with $\min\{\lambda,1-\lambda\}\le 2\lambda(1-\lambda)$ verifies that
    \begin{align}
        &\,\,\varphi(b)\leq t\varphi(y)+(1-t)\varphi(a) + 2\rho t(1-t) (y-a)^{1+\nu},\label{eq:in1}\\[2mm]
        &\varphi(a)\leq s\varphi(b)+(1-s)\varphi(x) +2\rho s(1-s) (b-x)^{1+\nu},\label{eq:in2}\\[2mm]
        &\,\varphi(z) \leq r\varphi(a)+(1-r)\varphi(x) +2\rho r(1-r) (a-x)^{1+\nu}.\label{eq:in3}
    \end{align}
    Combining \cref{eq:in1} and \cref{eq:in2} ensures
    $$
    \varphi(a)\leq s\left(t\varphi(y)+(1-t)\varphi(a) + 2\rho t(1-t) (y-a)^{1+\nu}\right)+(1-s)\varphi(x) + 2\rho s(1-s) (b-x)^{1+\nu},
    $$
    and consequently,
    \[
    \varphi(a)\leq \tfrac{st}{1-s(1-t)} \varphi(y) + \tfrac{1-s}{1-s(1-t)} \varphi(x)
        + 2\rho \left( \tfrac{st(1-t)}{1-s(1-t)} (y-a)^{1+\nu} + \tfrac{s(1-s)}{1-s(1-t)} (b-x)^{1+\nu}\right).
    \]
    Now, the inequality \cref{eq-lem:para4} is confirmed by \cref{eq:in3}, i.e.,
    \begin{align*}
        \varphi(z) &\leq \left( (1-r) + \tfrac{r(1-s)}{1-s(1-t)}\right) \varphi(x) + \tfrac{rst}{1-s(1-t)} \varphi(y) + 2\rho\left( \tfrac{rst(1-t)}{1-s(1-t)} (y-a)^{1+\nu} + \tfrac{rs(1-s)}{1-s(1-t)} (b-x)^{1+\nu} + r(1-r) (a-x)^{1+\nu}\right)\\[2mm]
        &\leq \lambda \varphi(x) + (1-\lambda) \varphi(y) + 6\rho\lambda(1-\lambda)(y-x)^{1+\nu}\leq \lambda \varphi(x) + (1-\lambda) \varphi(y) + 6\rho\min\{\lambda,1-\lambda\}(y-x)^{1+\nu}.
    \end{align*}
    To show Case~(ii), we assume that $z\neq a$. There exist $r,s\in (0,1)$ such that $z=ra+(1-r)y$ and $a=sx+(1-s)z$
    implying
    $z = \tfrac{rs}{1-r(1-s)} x + \tfrac{1-r}{1-r(1-s)} y.$
    So, $\lambda = \tfrac{rs}{1-r(1-s)}$.
    Moreover, relying again on $\nu$-paraconvexity of $\varphi$ we deduce
    \begin{align*}
        \varphi(z) &\leq r\varphi(a)+(1-r)\varphi(y) + 2\rho r(1-r) (y-a)^{1+\nu}\\
        &\leq r\left(s\varphi(x)+(1-s)\varphi(z) + 2\rho s(1-s) (z-x)^{1+\nu} \right)+(1-r)\varphi(y) + 2\rho r(1-r) (y-a)^{1+\nu},
    \end{align*}
    which comes out as \cref{eq-lem:para4}.
    In the case where $z=a$, we have $z\neq b$, and following a similar argument yields the desired result. 
    The proof of Case~(iii) is similar to Cases~(ii)-(iii) and is therefore omitted. 
\end{proof}

Our next result indicates that $\nu$-weak convexity leads to $\nu$-paraconvexity on compact convex sets.

\begin{thm}[{\bf $\nu$-weak convexity and $\nu$-paraconvexity}]\label{thm:weaktopara}
    Let $\mathcal{U}\subseteq \R^n$ be an open set and $S\subseteq \mathcal{U}$ be a nonempty convex and compact set.
    Let $\func{h}{\mathcal{U}}{\R}$ be a $\nu$-weakly convex function for some $\nu\in (0,1]$. Then, $h$ is a $\nu$-paraconvex function on $S$. 
\end{thm}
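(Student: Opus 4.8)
The plan is to upgrade the \emph{local} inequality furnished by $\nu$-weak convexity to a \emph{global} one on $S$ in two stages. First I would use compactness of $S$ to produce a single radius $\epsilon>0$ and a single constant $\tilde C\ge 0$ that serve every pair of nearby points; then I would pass to a one-dimensional reduction along an arbitrary segment of $S$ and stitch the inequality together using the gluing mechanism of \cref{lem:parapara}.

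\textbf{Uniform localization.} By $\nu$-weak convexity, each $x_0\in S$ admits $\delta_{x_0}>0$ and $C_{x_0}\ge 0$ so that the defining inequality holds on $\mathbb{B}(x_0;\delta_{x_0})$. The balls $\{\mathbb{B}(x_0;\delta_{x_0})\}_{x_0\in S}$ cover the compact set $S$; extracting a finite subcover $\mathbb{B}(x_1;\delta_1),\dots,\mathbb{B}(x_p;\delta_p)$ and putting $\tilde C:=\max_i C_{x_i}$, the Lebesgue number lemma yields $\epsilon>0$ such that any subset of $S$ of diameter $<\epsilon$ lies in one of these balls. In particular, for $u,v\in S$ with $\|u-v\|<\epsilon$ the segment $[u,v]$ has diameter $<\epsilon$ and so lies in some $\mathbb{B}(x_i;\delta_i)$, giving
\[
 h(\lambda u+(1-\lambda)v)\le \lambda h(u)+(1-\lambda)h(v)+\tilde C\,\lambda(1-\lambda)\|u-v\|^{1+\nu},\qquad \lambda\in[0,1].
\]

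\textbf{One-dimensional reduction and gluing.} Fix $x\ne y$ in $S$ and, using convexity of $S$, set $\varphi(t):=h((1-t)x+ty)$ for $t\in[0,1]$. If $t_1,t_2$ lie in a subinterval of length $<\epsilon/\|y-x\|$, the corresponding points of $[x,y]$ are within distance $\epsilon$, so the previous inequality translates into: $\varphi$ is $\nu$-paraconvex (in the sense of \cref{eq:para3}) on every subinterval of $[0,1]$ of length $<\epsilon/\|y-x\|$, with constant $K_0:=\tilde C\|y-x\|^{1+\nu}$. Now let $D:=\operatorname{diam}S<\infty$ and fix an integer $M>2D/\epsilon$, so that $2/M<\epsilon/\|y-x\|$ for \emph{every} $x,y\in S$. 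Partitioning $[0,1]$ by $\tau_j=j/M$, each $[\tau_{j-1},\tau_{j+1}]$ has length $2/M$ and hence carries $\nu$-paraconvexity with constant $K_0$. An induction on $j$ then shows $\varphi$ is $\nu$-paraconvex on $[0,\tau_j]$ with constant $3^{j-1}K_0$: applying \cref{lem:parapara} on $\mathcal I=[0,\tau_{j+1}]$ with $a=\tau_{j-1}<b=\tau_j$ glues paraconvexity on $[0,\tau_j]$ and on $[\tau_{j-1},\tau_{j+1}]$ into paraconvexity on $[0,\tau_{j+1}]$, inflating the constant by $3$. At $j=M$ we obtain $\nu$-paraconvexity of $\varphi$ on $[0,1]$ with constant $3^{M-1}K_0$, and evaluating at $t_1=1$, $t_2=0$ returns
\[
 h((1-\lambda)x+\lambda y)\le (1-\lambda)h(x)+\lambda h(y)+3^{M-1}\tilde C\,\lambda(1-\lambda)\|x-y\|^{1+\nu}.
\]
Since $M$ is independent of the chosen pair, this is precisely $\nu$-paraconvexity on $S$ with constant $C_*:=3^{M-1}\tilde C$ (via \cref{prop:paraequivalent} and the equivalence of \cref{eq:para1} with \cref{eq:para3}).

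\textbf{Main obstacle.} The delicate point is keeping the paraconvexity \emph{constant} uniform over $S$: each invocation of \cref{lem:parapara} multiplies the constant by $3$, so the number of gluings along a segment must be bounded independently of the endpoints. This is exactly where compactness enters twice—through the Lebesgue number, which supplies a uniform radius $\epsilon$ and constant $\tilde C$, and through the finite diameter $D$, which supplies a uniform subdivision count $M$—so that the final constant $3^{M-1}\tilde C$ cannot blow up. A secondary bookkeeping point is to track carefully how the weight $\|u-v\|^{1+\nu}$ in $\R^n$ becomes the length factor $\|y-x\|^{1+\nu}$ under the one-dimensional reduction, so that the displayed weights match the paraconvexity definition.
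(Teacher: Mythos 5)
Your proof is correct, and it shares the paper's overall skeleton — compactness to extract a single weak-convexity constant, restriction of $h$ to a segment to reduce to a one-variable function $\varphi$, and repeated use of the gluing mechanism of \cref{lem:parapara} — but your globalization step is genuinely different. The paper never extracts a uniform radius: it fixes one small interval $[\hat t-\epsilon,\hat t+\epsilon]$, defines the set $A$ of endpoints $t$ up to which $\varphi$ is $\nu$-paraconvex with constant $3\overline{C}\|y-x\|^{1+\nu}$, and runs a continuation (supremum) argument to push $\sup A$ to $1$, so that only two applications of \cref{lem:parapara} are ever needed and the final constant is the modest $9\overline{C}$. You instead invoke the Lebesgue number lemma to get a radius $\epsilon$ valid for all of $S$, and combine it with $\operatorname{diam} S<\infty$ to fix a subdivision count $M$ \emph{before} choosing the pair $(x,y)$; then a finite induction with $M-1$ gluings gives the constant $3^{M-1}\tilde C$. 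The trade-off: the paper's constant is far smaller (yours is exponential in $\operatorname{diam}(S)/\epsilon$), but since the theorem only asserts existence of some finite constant, this costs nothing, and in exchange your bookkeeping is airtight — each gluing inflates the constant by exactly $3$ and the number of gluings is bounded a priori. This robustness is not cosmetic: in the paper's contradiction step, membership $a\in A$ only guarantees constant $3\overline{C}$ on $[\hat t-\epsilon,a]$, yet the gluing with the constant-$\overline{C}$ interval $[a-\bar\epsilon,a+\bar\epsilon]$ is claimed to produce constant $3\overline{C}$ on the union, whereas \cref{lem:parapara} applied with the common constant $3\overline{C}$ actually yields $9\overline{C}$, which does not contradict $a=\sup A$ as defined; your fixed-partition induction sidesteps exactly this subtlety. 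One last cosmetic remark: your final display is paraconvexity in the form \cref{eq:para3}, and since $\lambda(1-\lambda)\le\min\{\lambda,1-\lambda\}$ it implies \cref{eq:para1} directly, so the detour through \cref{prop:paraequivalent} is not even needed.
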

\begin{proof}
    Let us begin by showing that the inequality of $\nu$-weak convexity holds uniformly on $S$, with a fixed constant $C> 0$ independent of $x\in S$.
    The $\nu$-weak convexity of $h$ implies that for each point $x\in S$, there exist constants
    $C_{x}> 0$ and $\delta_{x}>0$ such that for all
    $x_{0},y_{0} \in \mathbb{B}(x;\delta_{x})$ and $\lambda\in [0,1]$,
    \begin{equation}\label{eq:weak1}
            h(\lambda x_{0}+(1-\lambda)y_{0})\leq \lambda h(x_{0}) +(1-\lambda) h(y_{0}) + C_{x} \lambda(1-\lambda) \Vert x_{0}-y_{0}\Vert ^{1+\nu}.
    \end{equation}
    The collection of open balls, $\{\mathbb{B}(x;\delta_{x})\}_{x\in S}$ covers $S$, and by the compactness principle, a finite subcollection of them covers $S$.
    Thus, there exist points $x_i\in S$ and radii $\delta_i>0$, $i=1,2,\ldots,m,$ such that 
    $S\subseteq \bigcup_{i=1}^m \mathbb{B}(x_i;\delta_{i})$.
    Define $\overline{C}:=\max\{C_i : ~~i=1,2,\ldots,m\}$.
    Then, for each $x\in S$, there exists some $\delta_x>0$ such that $\mathbb{B}(x;\delta_x) \subseteq \mathbb{B}(x_i;\delta_{i})$, namely for an index $i$ such that $x\in \mathbb{B}(x_i;\delta_{i})$,
    and \cref{eq:weak1} holds on $\mathbb{B}(x;\delta_x)$ with the fixed constant $\overline{C}$.

    Let $x,y \in S$ and $\mu\in [0,1]$ be arbitrary and constant hereafter.
    Let us consider the function $\varphi:[0,1]\to \R$ defined as
    $\varphi(t):= h(z_t)$,
    where $z_t=y+t(x-y)$ and $t\in[0,1]$.
    We aim at showing that $\varphi$ is $\nu$-paraconvex on $[0,1]$ with constant $9\overline{C}\Vert y-x\Vert^{1+\nu}$.
    Once this is established, we have
    $$
         h(z_{\mu}) = \varphi(\mu) \leq \mu \varphi(1) +(1-\mu) \varphi(0) +9\overline{C} \Vert y-x\Vert^{1+\nu}\mu (1-\mu) \vert 1 - 0 \vert^{1+\nu}=\mu h(x) +(1-\mu) h(y) +9\overline{C} \mu (1-\mu) \Vert y - x \Vert^{1+\nu},
    $$
    as our desired result.
    Let $\hat{t}\in (0,1)$.
    Inasmuch as $S$ is convex, it follows that $z_{\hat{t}}\in S$.
    So, there exists $\delta>0$ such that
    for all $x_{0},y_{0} \in \mathbb{B}(z_{\hat{t}};\delta)$ and $\lambda\in [0,1]$, the inequality \cref{eq:weak1} is satisfied with constant $\overline{C}$.
    Let us consider $\epsilon :={\tfrac{1}{2}} \min\{1-\hat{t},\hat{t}, \tfrac{\delta}{\Vert y -x\Vert} \}$.
    Suppose $t_1,t_2 \in [\hat{t}-\epsilon, \hat{t}+\epsilon]$ and $\lambda\in [0,1]$.
    Then, $z_{t_{1}},z_{t_{2}} \in \mathbb{B}(z_{\hat{t}};\delta)$ and therefore
    $$
    h(\lambda z_{t_{1}} +(1-\lambda) z_{t_{2}})\leq \lambda h(z_{t_{1}}) +(1-\lambda) h(z_{t_{2}}) +\overline{C} \lambda (1-\lambda) \Vert z_{t_{1}} - z_{t_{2}} \Vert^{1+\nu},
    $$
    implying
    \[
    \varphi(\lambda t_{1} +(1-\lambda) t_{2})\leq \lambda \varphi(t_{1}) +(1-\lambda) \varphi(t_{2}) +\overline{C} \Vert y-x\Vert^{1+\nu}\lambda (1-\lambda) \vert t_{1} - t_{2} \vert^{1+\nu}.
    \]
    Hence, $\varphi$ is $\nu$-paraconvex on $[\hat{t}-\epsilon , \hat{t}+\epsilon]$ with constant $\overline{C}\Vert y-x\Vert^{1+\nu}$. Now, let us define
    $$A := \left\{t\in [0,1] :~ \varphi ~\text{is}~ \nu-\text{paraconvex on} ~[\hat{t}-\epsilon,t]~ \text{with constant}~ 3\overline{C}\Vert y-x\Vert^{1+\nu}\right\},$$
    and consider $a:= \sup A$.
    Clearly, $\hat{t}+\epsilon \in A$. So, the supremum is well-defined and $0<a\leq 1$.
    It is easy to see that $a\in A$ using continuity of $h$ (see \Cref{thm:paralip}). We claim that $a=1$.
    Otherwise, if $a\in (0,1)$,
    the argument applied for $\hat{t}$ can similarly be repeated with $a$
    to conclude that $\varphi$ is $\nu$-paraconvex on $[a-\bar{\epsilon} , a+\bar{\epsilon}]\subseteq (0,1)$ for some $\bar{\epsilon}>0$ with constant $\overline{C}\Vert y-x\Vert^{1+\nu}$.
    In this case, the function $\varphi$ is $\nu$-paraconvex on $[\hat{t}-\epsilon,a]$ and
    $[a-\bar{\epsilon} , a+\bar{\epsilon}]$ with constant $\overline{C}\Vert y-x\Vert^{1+\nu}$, and so on $[\hat{t}-\epsilon, a+\bar{\epsilon}]$
    with constant $3\overline{C}\Vert y-x\Vert^{1+\nu}$, by \cref{lem:parapara}.
    This contradicts $a=\sup A$.
    Thus, $a=1$. Hence, $\varphi$ is $\nu$-paraconvex on $[\hat{t}-\epsilon,1]$ with constant $3\overline{C}\Vert y-x\Vert^{1+\nu}$.
    Similarly, it can be shown that $\varphi$ is $\nu$-paraconvex on $[0, \hat{t}+\epsilon]$ with constant $3\overline{C}\Vert y-x\Vert^{1+\nu}$.
    Therefore, $\varphi$ is $\nu$-paraconvex on $[0,1]$ with constant $9\overline{C}\Vert y-x\Vert^{1+\nu}$, due to \cref{lem:parapara}, and the proof is completed.
\end{proof}

In what follows, we investigate several properties and characterizations of the class of $\nu$-paraconvex functions.
First, similar to the convex case, we observe that $\nu$-paraconvex functions exhibit local Lipschitz continuity.
Consequently, their Clarke's subdifferential is well defined and nonempty at interior points of their domain; see \cite[Proposition 2.1.2]{clarke1990optimization}.
Although this property was proven in \cite{jourani1996subdifferentiability,jourani1996open} under the assumptions of local boundedness or continuity, we establish here that 
$\nu$-paraconvex functions are locally bounded, thus directly satisfying the local Lipschitz continuity property.
\begin{thm}[{\bf Locally bounded and Lipschitz properties}]\label{thm:paralip}
    Let $\func{h}{\R^n}{\Rinf}$ be a proper $\nu$-paraconevex function on a convex set $S\subseteq \dom(h)$ with $\nu\in (0,1]$ and constant $\rho> 0$.
    Then the following assertions hold:
    \begin{enumerate}[label=(\alph*)]
        \item\label{thm:paralip-1} The function $h$ is locally bounded on $\interior S$;
        \item\label{thm:paralip-2} The function $h$ is locally Lipschitz on $\interior S$.
    \end{enumerate}
\end{thm}
\begin{proof}
    \ref{thm:paralip-1} Let $\bar{x} \in \interior S$.
    There exists $\epsilon>0$ such that
    $P:=\{x\in \R^n : \|x-\bar{x}\|_{\infty} \leq \epsilon\}\subseteq S.$
    Clearly, polyhedral $P$ has $2^n$ vertexes.
    Assume that $E$ is the set of all vertices of the polyhedral $P$ and 
    $M:=\max_{x\in E}h(x).$
    Let $x\in P$.
    Then, thanks to the Minkowski-Weyl representation \cite[Proposition 3.2.2]{bertsekas2003convex} and extension of Caratheodory's theorem \cite[Theorem 1.22]{bertsekas2003convex}, there exist scalars $\lambda_j\in [0,1]$ and vertices $x^j\in E$, $j=1,2\ldots,n+1$, such that
    \(x = \sum_{j=1}^{n+1} \lambda_j x^j\) and \(\sum_{j=1}^{n+1} \lambda_j =1.\)
    Using approximate Jensen’s inequality \cite[Lemma 2.1]{van2022variational}, we deduce
    \begin{align*}
        h(x) \leq \sum_{j=1}^{n+1} \lambda_j h(x^j) + \rho \sum_{j=1}^{n+1} \lambda_j(1-\lambda_j) \max_{1\le k\le n+1} \|x^k-x^j\|^{1+\nu}
        \leq M + \frac{n+1}{4}\sqrt{n}^{1+\nu}2^{1+\nu}\epsilon^{1+\nu}\rho.
    \end{align*}
    Now, assume that $x\in cl \mathbb{B}(\bar{x};\epsilon)\subseteq P$.
    Set, $\bar{\lambda}:= \frac{\epsilon}{\epsilon+\|x-\bar{x}\|}$.
    Then,
    \begin{align*}
        h(\bar{x}) \leq \bar{\lambda} h(x) + (1-\bar{\lambda}) h\left(\tfrac{1}{1-\bar{\lambda}}\bar{x}-\tfrac{\bar{\lambda}}{1-\bar{\lambda}}x\right) + \rho \min\{\bar{\lambda}, 1-\bar{\lambda}\} \left\|x-\tfrac{1}{1-\bar{\lambda}}\bar{x}+\tfrac{\bar{\lambda}}{1-\bar{\lambda}}x\right\|^{1+\nu},
    \end{align*}
    i.e.,
    \begin{align*}
        h(x) &\geq \tfrac{1}{\bar{\lambda}} h(\bar{x}) - \tfrac{1-\bar{\lambda}}{\bar{\lambda}} h\left(\tfrac{1}{1-\bar{\lambda}}\bar{x}-\tfrac{\bar{\lambda}}{1-\bar{\lambda}}x\right) - \tfrac{\rho \min\{\bar{\lambda}, (1-\bar{\lambda})\}}{\bar{\lambda}}\left(\epsilon+\|x-\bar{x}\|\right)^{1+\nu}\geq h(\bar{x}) -  M - \left(\tfrac{n+1}{4}\sqrt{n}^{1+\nu}-1\right) 2^{1+\nu}\epsilon^{1+\nu}\rho.
    \end{align*}
    Setting
    $$\overline{M}:=\max\left\{M + \tfrac{n+1}{4}\sqrt{n}^{1+\nu}2^{1+\nu}\epsilon^{1+\nu}\rho, \left|h(\bar{x}) -  M - \left(\tfrac{n+1}{4}\sqrt{n}^{1+\nu}-1\right) 2^{1+\nu}\epsilon^{1+\nu}\rho\right|\right\},$$
    we come to $|h(x)|\leq \overline{M}$ for all $x\in cl\,\mathbb{B}(\bar{x};\epsilon)$, i.e., $h$ is locally bounded on $\interior S$.
    Assertion~\ref{thm:paralip-2} follows from Assertion~\ref{thm:paralip-1} in combination with \cite[Lemma 2.5]{jourani1996open} and \cite[Proposition 2.2]{jourani1996subdifferentiability}, adjusting our desired result.
\end{proof}

In what follows, we investigate some calculus around the class of $\nu$-paraconvex functions. Since the proofs of these results are straightforward consequences of \Cref{def:paraconvex}, we omit the details of the proofs.

\begin{prop}[{\bf Paraconvexity calculus}]
\label{pro:relWeakConvCal}
    Let $\func{h}{\R^n}{\Rinf}$ be a proper and $\nu$-paraconvex function on a convex set $S\subseteq \dom(h)$ for some $\nu\in(0,1]$ with constant $\rho>0$, let $\func{h_i}{\R^n}{\Rinf}$, $i\in \mathcal{I}$ where $\mathcal{I}$ is an index set, be proper and $\nu_i$-paraconvex functions on a convex set $S_{\mathcal{I}}\subseteq \dom(h_i)$ for some $\nu_i\in(0,1]$ with constant $\rho_i>0$, and let
    $\func{g}{\R^n}{\Rinf}$ be a convex function on $S\subseteq \dom(g)$.
    Then, the following statements hold:
    \begin{enumerate}[label=(\alph*)]
        \item\label{pro:relWeakConvCal-1}
            The function $g+\varrho h$ is $\nu$-paraconvex on $S$ with constant $\varrho\rho$ for any $\varrho>0$;
        \item\label{pro:relWeakConvCal-2}
            The function $h$ is $\nu$-paraconvex on $S$ with any constant $\varrho \in (\rho, \infty)$;
        \item\label{pro:relWeakConvCal-3}
            The function $\varrho h$ is $\nu$-paraconvex on $S$ with constant $\varrho\rho$ for any $\varrho>0$;
        \item\label{pro:relWeakConvCal-4}
            If $S$ is bounded with diameter $K>0$, then $h$ is $\vartheta$-paraconvex on $S$ with constant $\rho K^{\nu-\vartheta}$ for any $0<\vartheta<\nu$;
        \item \label{pro:relWeakConvCal-5}
            If $\sum_{i} \rho_i<\infty$ and $\vartheta=\nu_{i}$, $i\in \mathcal{I},$ then $\sum_{i} h_i$ is $\vartheta$-paraconvex on $S_{\mathcal{I}}$ with constant
            $\sum_{i} \rho_i$;
        \item\label{pro:relWeakConvCal-6}
            If $S_{\mathcal{I}}$ is bounded with diameter $K>0$, $0<\vartheta=\inf_{i} \nu_{i}$, and $\sum_{i} \rho_iK^{\nu_{i}-\vartheta}<\infty$, then $\sum_{i} h_i$ is $\vartheta$-paraconvex on $S_{\mathcal{I}}$ with constant
            $\sum_{i} \rho_i K^{\nu_{i}-\vartheta};$
        \item\label{pro:relWeakConvCal-7}
            If $\sup_{i} \rho_i<\infty$,  and $\vartheta=\nu_{i}$, $i\in \mathcal{I},$ then $\sup_{i} \set{h_i(x)}$ is $\vartheta$-paraconvex on $S_{\mathcal{I}}$ with constant $\rho:=\sup_{i} \rho_i$.
    \end{enumerate}
\end{prop}

We next present characterizations of the $\nu$-paraconvexity that are the straightforward consequences of the definition.
Some of these results on the whole space have been previously investigated in the literature; see \cite{jourani1996subdifferentiability}. We include them here for completeness.

Recall that, by \cite[Proposition~2.1.2]{clarke1990optimization}, if a function is locally Lipschitz continuous at an interior point of its domain, then its Clarke's subdifferential is well defined and nonempty at that point.

\begin{thm}[{\bf First order characterization}]\label{thm:f-chara}
    Let $\func{h}{\R^n}{\Rinf}$ be a proper and locally Lipschitz function on a nonempty convex set $S\subseteq \interior\dom(h)$. Let $\nu\in (0,1]$ and $\rho\geq 0$. Then, the following statements hold:
    \begin{enumerate}[label = (\alph*)]
        \item \label{thm:f-chara_p1}
            If $S$ is open and $h$ is a $\nu$-paraconvex function on $S$ with constant $\rho$,
            then for any $x, y \in S$ and any $\zeta\in\partial h(x)$, 
                \begin{equation}\label{eq:charcterization1}
                    h(y)\geq h(x)+\innprod{\zeta}{y-x}-\rho \Vert y-x\Vert^{1+\nu}.
                \end{equation}
            Conversely, if inequality \cref{eq:charcterization1} holds for any $x, y \in S$ and any $\zeta\in\partial h(x)$, then 
            $h$ is $\nu$-paraconvex function on $S$ with constant $2^{1-\nu}\rho$;
        \item \label{thm:f-chara_p2}
            If $S$ is open and $h$ is a $\nu$-paraconvex function on $S$ with constant $\rho$, then $\partial h$ is $\nu$-paramonotone on $S$ with constant $2\rho$ , i.e., for any $x,y\in S$, any $\zeta\in\partial h(x)$, and any $\eta\in\partial h(y)$,
            \begin{equation}\label{eq:charctrization5}
                \innprod{\eta-\zeta}{y-x}\geq -2\rho \Vert x-y\Vert^{1+\nu}.
            \end{equation}
            Conversely, if $\partial h$ is $\nu$-paramonotone on $S$ with constant $\rho$, then $h$ is $\nu$-paraconvex on $S$ with constant $\tfrac{2^{1-\nu}\rho}{1+\nu}$.
    \end{enumerate}
\end{thm}
\begin{proof}
    \ref{thm:f-chara_p1}  For the first implication,
    let $x, y \in S$ and $\zeta\in\partial h(x)$. Define $d:=y-x$. Then, for any $z\in S$ sufficiently close to $x$ and any $0<t<1$, we obtain
    \[
    h(z+d)- h(z) \ge \tfrac{h(z+td) -h(x)}{t} -\rho \tfrac{\min\{t,1-t\}}{t} \|d\|^{1+\nu}.
    \]
    Taking the limit superior on both sides 
    as $t\to 0$ and $z\to x$, and using continuity of $h$ yields inequality \cref{eq:charcterization1}.
    To verify the converse implication, let $x,y \in S$, $0<\lambda<1$, and $\zeta\in\partial h(\lambda x +(1-\lambda) y)$.
    By \cref{eq:charcterization1}, we obtain
    \begin{align*} 
        h(x)&\geq h(\lambda x + (1-\lambda)y)-(1-\lambda)\innprod{\zeta}{y-x}-\rho (1-\lambda)^{1+\nu} \Vert y-x\Vert^{1+\nu},\\[2mm]
        &~h(y)\geq h(\lambda x + (1-\lambda)y) + \lambda\innprod{\zeta}{y-x}-\rho \lambda ^{1+\nu} \Vert y-x\Vert^{1+\nu}.
    \end{align*}
    Combining these inequalities gives
    $$\begin{array}{ll}
         \lambda h(x) +(1-\lambda) h(y) &\geq h(\lambda x + (1-\lambda)y) -\rho (\lambda (1-\lambda)^{1+\nu}+(1-\lambda)\lambda^{1+\nu}) \,\Vert y-x\Vert^{1+\nu}\\[2mm]
         & \geq h(\lambda x + (1-\lambda)y) -2^{1-\nu}\rho \min\{\lambda,(1-\lambda)\} \,\Vert y-x\Vert^{1+\nu},
    \end{array}
    $$
    where the second inequality follows from the concavity of function $t\mapsto (1-t)^{\nu}+t^{\nu}$ on $[0,1]$ which attains its maximum at $t=\nicefrac{1}{2}$ implying $(1-\lambda)^{\nu}+\lambda^{\nu}\le 2^{1-\nu}$.\\
    \ref{thm:f-chara_p2} The necessity implication follows immediately from \cref{eq:charcterization1}. For the converse implication, let $x,y\in S$ and $\zeta\in\partial h(x)$. Then, there exists a measurable selection $\eta_t\in \partial h(ty+(1-t)x)$ for almost every $t\in [0,1]$ such that
    \[
    h(y)- h(x)-\innprod{\zeta}{y-x}
    = \int_{0}^1 \innprod{\eta_t-\zeta}{y-x}dt
    \ge \int_{0}^1 -\rho t^\nu \Vert y-x\Vert^{1+\nu}dt = -\tfrac{\rho}{1+\nu} \Vert y-x\Vert^{1+\nu},
    \]
    completing the proof in view of Assertion~\ref{thm:f-chara_p1}.
\end{proof}

Applying the above characterization, we show the $\nu$-paraconvexity of a composite function.

\begin{prop}[{\bf Composite paraconvex function}]\label{pro:para-comp}
    Let $S\subseteq \R^n$ be a convex set and let $\varphi:\R^n\to \R$ be a convex and Lipschitz function with constant $L_{0}>0$ on $S$.
    Assume that the
    Jacobian of $g:\R^m\to \R^n$ is H\"{o}lder continuous of order $\nu\in (0,1]$ with constant $L_{1}>0$ on $S$. Then, the composite function $h:=\varphi \circ g$ is $\nu$-paraconvex with constant $\rho=\tfrac{2^{1-\nu}L_{0}L_{1}}{1+\nu}$ on $S$.
\end{prop}
\begin{proof}
    The H\"{o}lder continuity of Jacobian of $g$ ensures
    $$\Vert g(y) - g(x) - J(x)(y-x)\Vert \leq \tfrac{L_{1}}{1+\nu} \Vert y-x\Vert^{1+\nu},~~~~\forall x,y\in S,$$
    as stated in \cite[Lemma 1]{yashtini2016global}.
    Let $x,y \in S$ and $\zeta \in \partial h(x)$.
    By the chain rule theorem, \cite[Theorem 2.3.10]{clarke1990optimization}, there exists $\eta \in \partial \varphi(g(x))$ such that $\zeta = J(x)^T \eta$. Now, using these properties and the convexity of $\varphi$, we get
    $$
    \begin{array}{ll}
         h(y)-h(x) -\langle \zeta , y-x\rangle &= \varphi(g(y)) -\varphi(g(x)) - \langle J(x)^T \eta , y-x\rangle \geq \langle \eta ,g(y)-g(x)\rangle - \langle \eta , J(x)(y-x)\rangle\\[2mm]
         &\geq -\Vert \eta \Vert \,\Vert g(y) - g(x) - J(x)(y-x)\Vert\geq -\tfrac{L_{0}L_{1}}{1+\nu} \Vert y-x\Vert^{1+\nu},
    \end{array}
    $$
    i.e., our desired result follows from \cref{thm:f-chara}~\ref{thm:f-chara_p1}.
\end{proof}

We conclude this subsection by indicating that the convexity of the function $x\mapsto h(x)+C \Vert x\Vert ^{1+\nu}$ is sufficient for the $\nu$-paraconvexity of $h$. Building on this observation, we also derive a sufficient second-order characterization.

\begin{thm}[\textbf{Second order characterization}]\label{thm:S-chara}
    Let $\func{h}{\R^n}{\Rinf}$ be a proper and continuous function and $S\subseteq\dom(h)$ be a convex set. Let $\nu\in (0,1]$ and $C, \overline{C}> 0$. Then, the following assertions hold:
    \begin{enumerate}[label=(\alph*)]
        \item \label{thm:S-chara-1}
            Let $\func{g}{\R^n}{\Rinf}$ be a continuously differentiable function whose gradient is H\"{o}lder continuous of order $\nu$ with constant
            $\overline C$ on $S\subseteq\interior\dom(g)$. If the function $h+C g$ is convex on $S$, then $h$ is $\nu$-paraconvex on $S$ with constant $\tfrac{2^{1-\nu}\overline CC}{1+\nu}$;
        \item \label{thm:S-chara-2}
            If the function $x\mapsto h(x)+C \Vert x\Vert ^{1+\nu}$ is convex on $S$, then $h$ is $\nu$-paraconvex on $S$ with constant $4^{1-\nu}C$;
        \item \label{thm:S-chara-3}
            If $h$ is a twice continuously differentiable function on $S\subseteq\interior\dom(h)$ and
            the matrix
            $$\nabla^{2}h(x)+C\tfrac{1+\nu}{\Vert x\Vert^{3-\nu}} \left(\Vert x\Vert^2 I -(1-\nu) x x^T\right),$$
            is positive semidefinite for each $x\in S$ with $x\neq 0$,
            then $h$ is a $\nu$-paraconvex function on $S$.
    \end{enumerate}
\end{thm}
\begin{proof}
    \ref{thm:S-chara-1}
    By invoking \cref{pro:para-comp}, the function $g$ is $\nu$-paraconvex on
    $S$ with constant $\tfrac{2^{1-\nu}\overline{C}}{1+\nu}$.
    Thus, the function
    $h$ is $\nu$-paraconvex on $S$ with constant $\tfrac{2^{1-\nu}\overline{C}C}{1+\nu}$ by \cref{pro:relWeakConvCal}~\ref{pro:relWeakConvCal-1}.\\
    \ref{thm:S-chara-2}
    According to \cite[Theorem~6.3]{rodomanov2020smoothness}, the gradient of the function
    \(x \mapsto - \|x\|^{1+\nu}\)
    is a H\"{o}lder continuous of order $\nu$ with constant
    $2^{1-\nu}(1+\nu)$.
    Therefore, the claim holds by Assertion~\ref{thm:S-chara-1}.\\
    \ref{thm:S-chara-3}
    This assertion follows directly from Assertion~\ref{thm:S-chara-1}.
\end{proof}

\begin{cor}
    Let $\func{h}{\R^n}{\Rinf}$ be a convex function on a convex set $S\subseteq \dom(h)$. Then, for each $\nu\in (0,1]$ and $C> 0$, the function $x\mapsto h(x)-C \Vert x\Vert ^{1+\nu}$ is $\nu$-paraconvex on $S$.
\end{cor}

\begin{rem}
    In the case that $\nu=1$, a function $\func{h}{\R^n}{\Rinf}$ is $1$-paraconvex if and only if there exists a nonnegative constant $C$ such that $h(x)+C \Vert x\Vert ^{2}$ is convex; see \cite{rolewicz1979gamaparaconvex,rolewicz1979paraconvex,nurminskii1973quasigradient}.
    On the other hand, for the case that $0<\nu<1$, Rolewicz \cite[Example 1]{rolewicz1979gamaparaconvex} presented an example of a $\nu$-paraconvex function for which a similar result does not hold.
    However, we establish that the convexity of $h(x)+C \Vert x\Vert ^{1+\nu}$ for some $0<\nu<1$ and $C> 0$ implies the $\nu$-paraconvexity of $h$ (see \cref{thm:S-chara}).
    Moreover, in \Cref{exa:example_para}, we provide an example that highlights this result, emphasizing the existence of $\nu$-paraconvex functions with $0<\nu<1$ that are not $1$-paraconvex (weakly convex). Additionally, the function $h(x)= -|x|$ is not $\nu$-paraconvex for any $\nu\in (0,1]$.
\end{rem}

The following example provides a $\nu$-paraconvex function which is not $1$-paraconvex (weakly convex). 

\begin{exa}\label{exa:example_para}
    Let $\nu\in (0,1)$. Let us consider the function $\func{h}{\R}{\R}$ given by
    \[
    h(x) = \left\{\begin{array}{ll}
         1 - |x|^{1+\nu}, \quad\quad\quad & -1\leq x \leq 1,  \\
         x^2 -1, \quad &o.w, 
    \end{array}\right.
    \]
    see \Cref{fig:paraconvex}. The function \(x \mapsto h(x) + |x|^{1+\nu}\) is convex implying that \(h\) is a \(\nu\)-paraconvex function. 
    Furthermore, it can be seen that for any \(\rho> 0\), the function \(x \mapsto h(x) + \rho |x|^2\) is not convex and possesses a local maximum at $x=0$ since $h''(x)+2\rho\to -\infty$ as $x\to 0$. 
    This indicates that the original function \(h\) is not \(1\)-~paraconvex (i.e., weakly convex). 
    Nonetheless, the function \(x \mapsto h(x) + \rho |x|^2\) also qualifies as a \(\nu\)-paraconvex function.
    \begin{figure}[htp!]
        \centering
        \subfloat{\includegraphics[width=8.1cm]{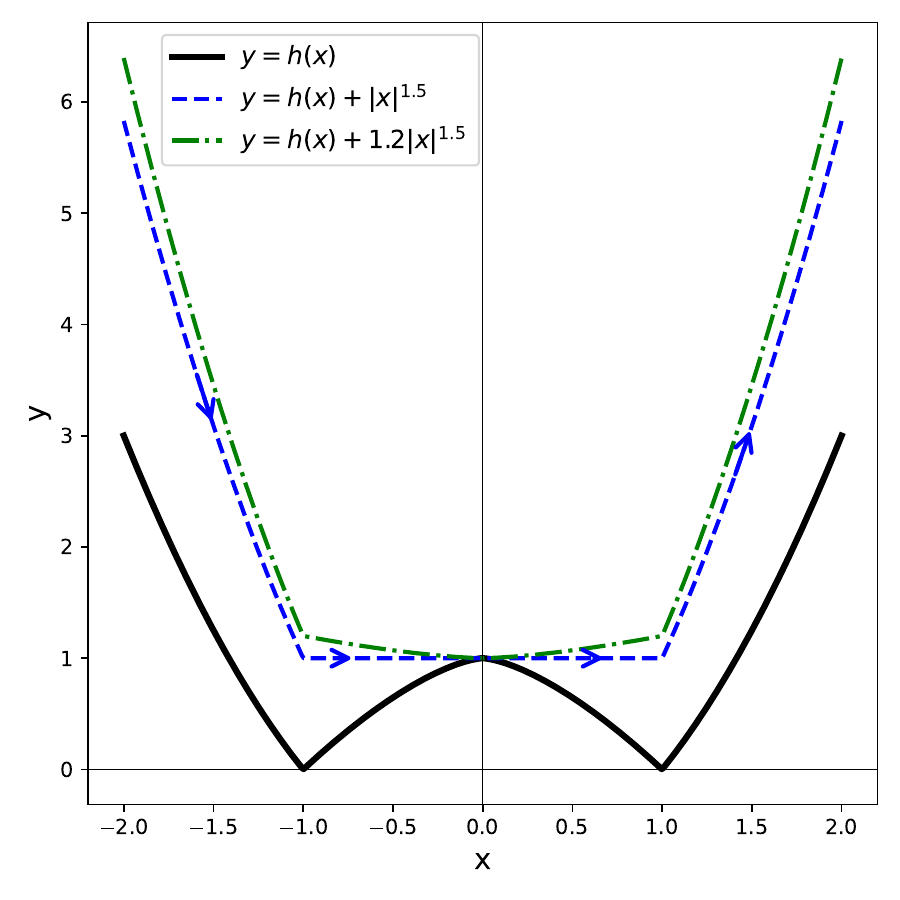}}%
        \qquad
        \subfloat{\includegraphics[width=8.1cm]{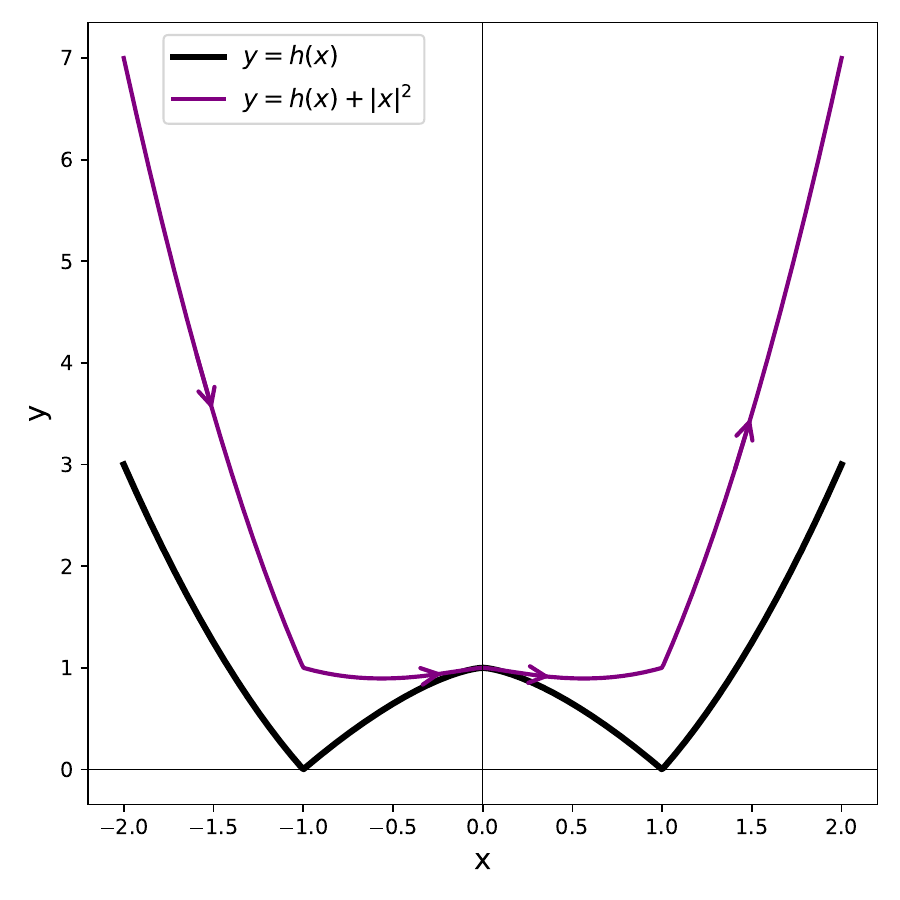}}%
        \qquad
        \caption{Comparisons of the function $h(x)$ with $\nu=0.5$ and its modified versions demonstrating $\nu$-paraconvexity. The left plot shows $h(x)$ (in black), $h(x)+|x|^{1.5}$ (in green), highlighting the convexity of $h(x)+|x|^{1.5}$ which implies $h$ is $0.5$-paraconvex. The right plot shows $h(x)$ alongside $h(x)+|x|^{2}$ (in purple), illustrating that $h$ is not $1$-paraconvex.}
        \label{fig:paraconvex}
    \end{figure}\qed
\end{exa}

\vspace{-4mm}
\subsection{{\bf Paraconvex optimization: Finding global minima}}
Here, we consider the optimization problem of the form \cref{prb2}. Throughout the paper, we consider the following assumption.

\begin{ass}\label{ass}
    We assume that
    \begin{enumerateq}
        \item[1.] The set $X\subseteq \dom(f)$ is nonempty closed and convex;
        \item[2.] The set of minimizers $\mathcal{X}^*:=\argmin_{x\in X} f(x)$ is nonempty and $f^*$ is the optimal value of \cref{prb2};
    \end{enumerateq} 
     and the following properties are satisfied for parameters $\rho> 0$, $0< \nu\leq 1$, $\tfrac{1}{1+\nu}<\delta\leq 1$ and $\mu >0$:
    \begin{enumerateq}
        \item[3.] The function $f$ is $\nu$-paraconvex with constant $\rho$ on a open convex set $X'\subseteq \dom(f)$ such that $X\subseteq X'$;
        \item[4.] The function $f$ admits the HEB of order $\delta$ with constant $\mu$.
    \end{enumerateq}
\end{ass}

As discussed in \Cref{sec:paraconvexFunc}, the class of paraconvex functions includes nonconvex and nonsmooth objectives that arise in structured optimization models. In the following example, we illustrate that such functions may admit saddle points, while their global minimizers can still be located within a relatively large basin of attraction.

\begin{exa}\label{exa:example_para_saddle}
    Let us consider the function $\func{h}{\R^2}{\R}$ given by \(h(x,y)= x^2 + \Big(|y|^{1+\nu} - 1\Big)^2\) with $\nu\in (0,1]$.
    A direct calculation yields
    \[
    \nabla h(x,y) = \begin{bmatrix}
        2x\\
        2(1+\nu){\rm sgn}(y)(|y|^{1+2\nu} -|y|^{\nu})
    \end{bmatrix},\quad\quad
    \nabla^{2} h(x,y) = \begin{bmatrix}
        2  &  0\\
        0  & 2(1+\nu)\Big((1+2\nu)|y|^{2\nu} -\nu|y|^{\nu-1}\Big)
    \end{bmatrix}.
    \]
    It is evident that $h$ is nonconvex; see \Cref{fig:paraconvExa}. Moreover, the points $(0,\pm1)$ and $(0,0)$ are stationary.
    The points $(0,\pm1)$ are global minimizers, whereas $(0,0)$ is a saddle point.
    In particular, when $\nu=1$, the Hessian at $(0,0)$ has eigenvalues $\lambda_1=2$ and $\lambda_2=-4$,
    showing that $(0,0)$ is a strict nondegenerate saddle point.
    In contrast, for $\nu\in(0,1)$, the second eigenvalue tends to $-\infty$ as $y\to0$. Furthermore, the function $h$ is $\nu$-paraconvex with any constant $\rho>2$.
    It is also worth noting that, for each global minimizer, the open ball of radius $r<1$ centered at the minimizer contains no saddle points.\qed
    \begin{figure}[htp]
        \centering
        \subfloat{\includegraphics[width=6.4cm]{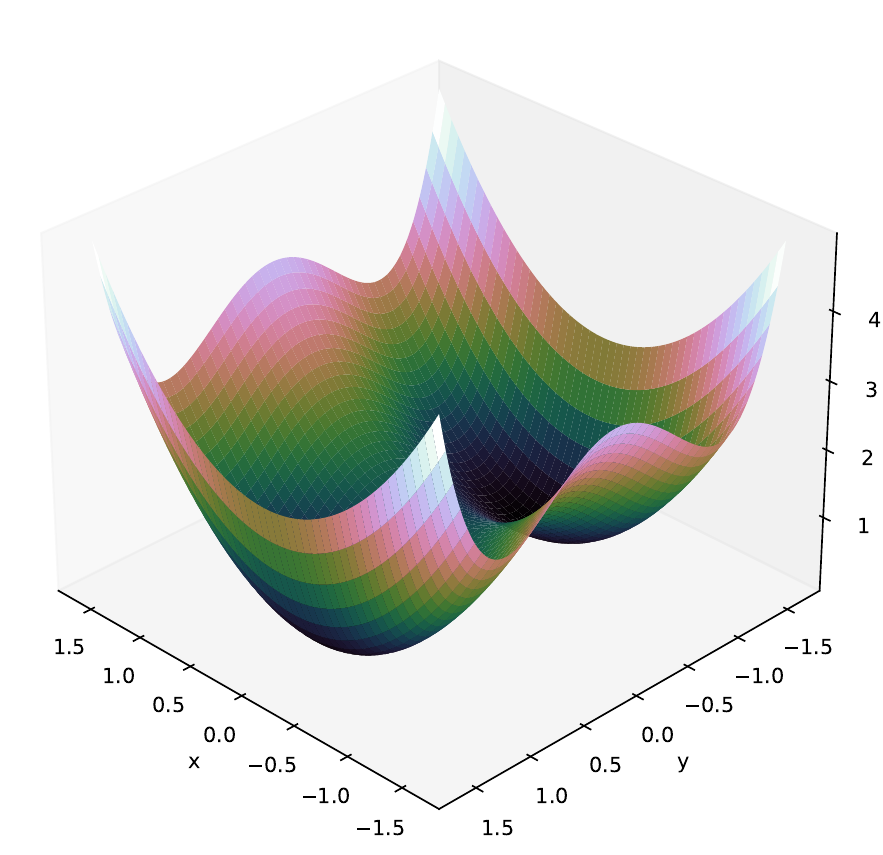}}%
        \qquad
        \subfloat{\includegraphics[width=8cm]{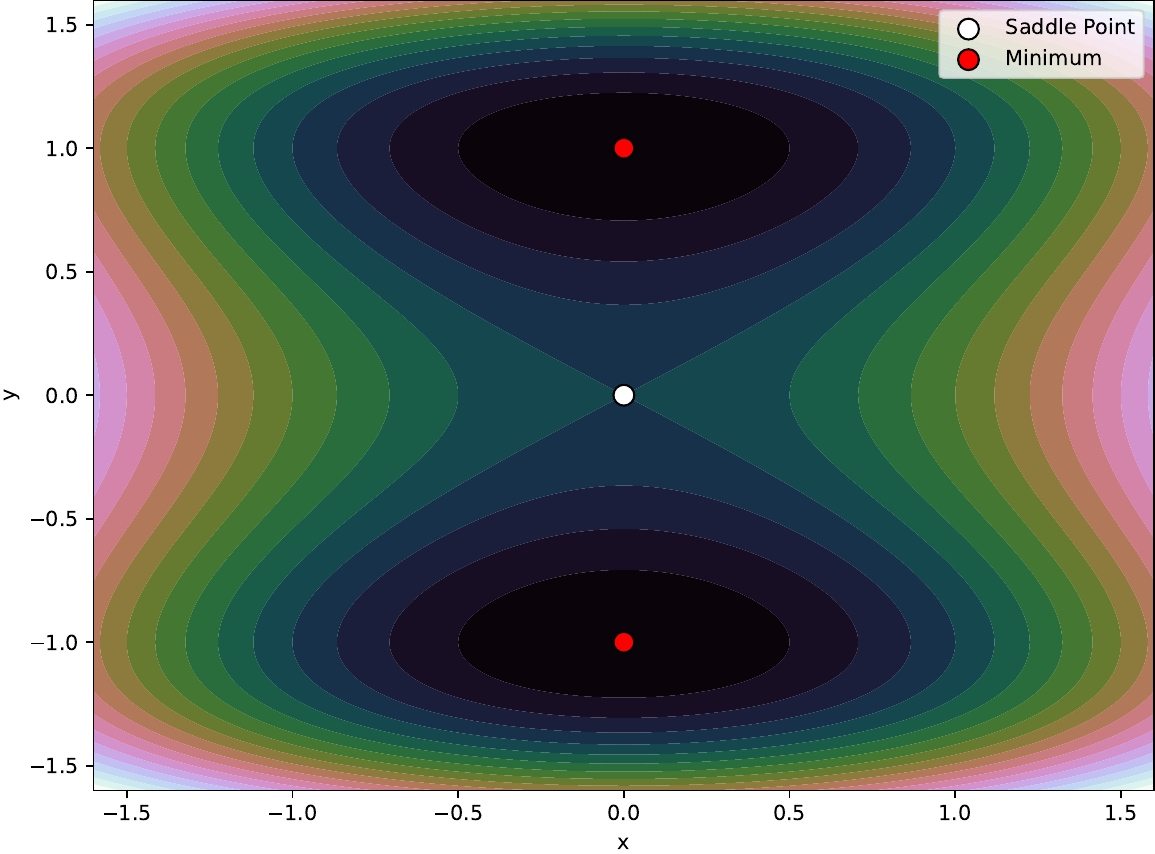}}%
        \caption{Stationary points of the function $h(x,y)= x^2 + \Big(y^2 - 1\Big)^2$: global minima at \((0,\pm 1)\) (red points) and a saddle point at \((0,0)\) (white point).}\label{fig:paraconvExa}
    \end{figure}
\end{exa}

For many problems of the form \cref{prb2}, there is a wide basin around the global minima, as suggested in \Cref{fig:paraconvExa}. Hence, if a local optimization algorithm initializes from a point in this basin, it should converge fast to a global minimizer. This motivates the quest for developing two-stage algorithms in which (i) the outer algorithm (e.g., a spectral approach) is used to find a reasonable approximation of a global minimizer in the basin of the cost function, and (ii) the inner algorithm is used to achieve fast convergence to a global minimizer. In more detail, some sufficient conditions can be considered under which variants of spectral methods \cite[Section VIII]{chi2019nonconvex} can bring us to the vicinity of a global minimizer, and then we can apply a proper algorithm (e.g., PSMs; See \Cref{sec:subGradMethod}) to converge to this global minimizer.

In the next result, we introduce a neighborhood of the optimal solution set of (\ref{prb2}) that is devoid of extraneous stationary points as a direct result of $\nu$-paraconvexity and H\"{o}lderian error boundness. Note that this region ensures a suitable region around the global minia such that if a proper algorithm is initialized from there, then it can generate a sequence converging to the global minimizer. In the following result, let us recall that \Cref{ass} holds.

\begin{lem}\label{lem-dis2}
    Let $x\in X$ be a stationary point of \cref{prb2}. Then, either $x\in \mathcal{X}^*$ or
    \begin{equation}\label{dis2}
        \bigg(\tfrac{\mu}{\rho}\bigg)^{\tfrac{\delta}{\delta(1+\nu)-1}} \leq \dist(x;\mathcal{X}^*).
    \end{equation} 
\end{lem}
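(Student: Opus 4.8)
The plan is to exploit the elementary observation that at a stationary point the zero vector is an admissible subgradient, so the paraconvex subgradient inequality of \cref{pro:characterization}(a) loses its linear term and directly controls the gap $f(x)-f^*$. First I would translate the hypothesis: $x$ being stationary for \cref{prb2} means $0\in\partial(f+\delta_X)(x)$. Since $f$ is $\nu$-paraconvex with constant $\rho$ and, by \cref{prop:paralip}, locally Lipschitz on $X$, \cref{pro:characterization}(a) applied with $\zeta=0$ yields
\[
    f(y)\ge f(x)-\rho\|y-x\|^{1+\nu},\qquad\forall y\in X.
\]

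Next I would specialize $y$ to a nearest optimal point. Continuity of $f$ (again \cref{prop:paralip}) together with closedness of $X$ makes $\mathcal{X}^*=\{x\in X:f(x)=f^*\}$ closed, so a projection $y:=\operatorname{proj}_{\mathcal{X}^*}(x)$ exists with $\|y-x\|=\dist(x;\mathcal{X}^*)$ and $f(y)=f^*$. Substituting gives the upper estimate $f(x)-f^*\le \rho\,\dist^{1+\nu}(x;\mathcal{X}^*)$, which I would then pair with the lower estimate furnished by the H\"olderian error bound (item~4 of \cref{ass}), namely $\mu\,\dist^{\frac1\delta}(x;\mathcal{X}^*)\le f(x)-f^*$. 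Eliminating $f(x)-f^*$ between the two produces
\[
    \mu\,\dist^{\frac1\delta}(x;\mathcal{X}^*)\le \rho\,\dist^{1+\nu}(x;\mathcal{X}^*).
\]

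Finally I would resolve the dichotomy. If $x\in\mathcal{X}^*$ there is nothing to prove; otherwise $d:=\dist(x;\mathcal{X}^*)>0$, and dividing the last inequality by $d^{1+\nu}$ leaves $\mu\,d^{\frac1\delta-(1+\nu)}\le\rho$ (forcing in particular $\rho>0$). The standing hypothesis $\delta>\tfrac1{1+\nu}$ in \cref{ass} guarantees that the exponent $\tfrac1\delta-(1+\nu)$ is strictly negative; raising both sides to the reciprocal of this negative number reverses the inequality and, after simplifying $\tfrac1\delta-(1+\nu)=-\tfrac{\delta(1+\nu)-1}{\delta}$, delivers precisely $\big(\tfrac\mu\rho\big)^{\delta/(\delta(1+\nu)-1)}\le d$, as claimed.

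I do not anticipate a serious obstacle: the argument is short once the right subgradient inequality is invoked. The two points deserving care are the legitimacy of the projection onto $\mathcal{X}^*$ (handled by closedness of $\mathcal{X}^*$) and the bookkeeping of the exponent's sign; indeed the constraint $\tfrac1{1+\nu}<\delta\le 1$ imposed in \cref{ass} is exactly what keeps $\delta(1+\nu)-1>0$ and hence makes the displayed exponent $\delta/(\delta(1+\nu)-1)$ positive and the final bound meaningful.
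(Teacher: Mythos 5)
Your proposal is correct and follows essentially the same route as the paper's proof: use $0\in\partial(f+\delta_X)(x)$ in the subgradient inequality of \cref{pro:characterization}(a), evaluate at a nearest point $\bar x\in\operatorname{proj}_{\mathcal{X}^*}(x)$, and sandwich $f(x)-f^*$ between the HEB lower bound $\mu\,\dist^{1/\delta}(x;\mathcal{X}^*)$ and the paraconvexity upper bound $\rho\,\dist^{1+\nu}(x;\mathcal{X}^*)$. Your additional remarks on the closedness of $\mathcal{X}^*$ and the sign of the exponent $\tfrac{1}{\delta}-(1+\nu)$ are just the details the paper leaves implicit.
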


\begin{proof}
    Without loss of generality, let $x\in X\setminus \mathcal{X}^*$ be a stationary point of \cref{prb2}. Hence, $0\in \partial(f+\delta_X)(x)$ and \cref{thm:f-chara} yield that
    $$f(y) - f(x) \geq -\rho \Vert y-x\Vert^{(1+\nu)}, \quad \forall y\in X.$$
    Setting $\bar{x} \in \operatorname{proj}_{\mathcal{X}^*}{(x)}$ and applying the HEB, we come to
    \begin{equation*}
             \mu \dist^{\nicefrac{1}{\delta}}(x;\mathcal{X}^*)\leq f(x) - f(\bar x) \leq \rho \Vert x-\bar x\Vert^{1+\nu}
                =\rho \dist^{1+\nu}(x;\mathcal{X}^*),
    \end{equation*}
    leading to \cref{dis2}. 
\end{proof}

Now, for any $\gamma \in (0,1]$, let us define the tube
\begin{equation}\label{eq:Tgamma}
    \mathcal{T}_\gamma :=\left\{x \in  X : \dist(x;\mathcal{X}^*) < \left(\gamma\tfrac{\mu}{\rho}\right)^{\tfrac{\delta}{\delta(1+\nu)-1}}\right\},
\end{equation}
which contains no extraneous stationary points of the problem, due to \cref{lem-dis2}.
Additionally, we set
$$L := \sup \bigg\{\Vert \zeta\Vert : \zeta\in \partial f(x),~ x \in \mathcal{T}_{1}\bigg\}.$$
The following lemma provides a key relationship between $\mu$ and $L$. Specifically, for each $x\in \mathcal{T}_{1}$, the ratio
$\tau_{x} := \tau \dist^{\tfrac{1}{\delta}-1}(x;\mathcal{X}^*)$
lies within the interval $[0,1]$, where $\tau = \tfrac{\mu}{L}$.

\begin{lem}\label{lem:tau}
    Let $\gamma\in (0,1]$ and $x\in \mathcal{T}_{\gamma}$. Then, $\tau_{x} = \tau \dist^{\frac{1}{\delta}-1}(x;\mathcal{X}^*)\in [0,1]$. In addition, if $\delta=1$, then $\tau_{x}=\tau \in (0,1]$.
\end{lem}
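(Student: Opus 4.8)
The plan is to treat the two bounds separately. The lower bound $\tau_x \ge 0$ is immediate, since $\tau = \mu/L \ge 0$ and, because $\delta \le 1$ forces $\tfrac1\delta - 1 \ge 0$, the factor $\dist^{\frac1\delta-1}(x;\mathcal X^*)$ is nonnegative (with the convention $0^0=1$ when $\delta=1$, the case $d:=\dist(x;\mathcal X^*)=0$ being trivial). The whole content therefore lies in the upper bound $\tau_x \le 1$, which I would rewrite as the equivalent inequality $\mu\,\dist^{\frac1\delta-1}(x;\mathcal X^*) \le L$. I note in advance that a direct use of the subgradient inequality from \cref{pro:characterization} together with the HEB condition only yields $\mu\,\dist^{\frac1\delta-1}(x;\mathcal X^*) \le \|\zeta\| + \rho\,\dist^{\nu}(x;\mathcal X^*) \le L + \rho\,\dist^{\nu}(x;\mathcal X^*)$, which is too weak; the surplus term $\rho\,\dist^{\nu}$ cannot be discarded this way. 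Instead I would exhibit a single point of $\mathcal T_1$ whose subgradient norm \emph{alone} already reaches $\mu\,\dist^{\frac1\delta-1}(x;\mathcal X^*)$.

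To this end, fix $x\in\mathcal T_\gamma$ with $d>0$, set $\bar x := \operatorname{proj}_{\mathcal X^*}(x)$, and consider the one-dimensional restriction $\varphi(t) := f(\bar x + t(x-\bar x))$ for $t\in[0,1]$. By \cref{prop:paralip} the function $f$ is locally Lipschitz on the open convex set $X'\supseteq X$, so $\varphi$ is Lipschitz on $[0,1]$, and HEB gives $\varphi(1)-\varphi(0) = f(x)-f^* \ge \mu\,d^{\frac1\delta}$. Applying Lebourg's nonsmooth mean value theorem together with the Clarke chain rule for the affine composition $\varphi = f\circ(\bar x + \,\cdot\,(x-\bar x))$, I obtain some $t_0\in(0,1)$, the point $y_0 := \bar x + t_0(x-\bar x)$, and a subgradient $\zeta\in\partial f(y_0)$ such that $\langle \zeta,\, x-\bar x\rangle = \varphi(1)-\varphi(0) \ge \mu\,d^{\frac1\delta}$. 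The Cauchy--Schwarz inequality then gives $\|\zeta\|\,d \ge \mu\,d^{\frac1\delta}$, that is, $\|\zeta\| \ge \mu\,d^{\frac1\delta-1}$.

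The final step is to certify that $y_0$ is admissible for the supremum defining $L$. Since $\bar x\in\mathcal X^*\subseteq X$ and $x\in X$, convexity of $X$ gives $y_0\in X$, while $\dist(y_0;\mathcal X^*) \le \|y_0-\bar x\| = t_0\,d < d < (\tfrac{\mu}{\rho})^{\frac{\delta}{\delta(1+\nu)-1}}$, so $y_0\in\mathcal T_1$. Consequently $L \ge \|\zeta\| \ge \mu\,d^{\frac1\delta-1}$, which is precisely $\tau_x = \tfrac{\mu}{L}\,d^{\frac1\delta-1} \le 1$, establishing $\tau_x\in[0,1]$. For the refinement when $\delta=1$ one has $d^{\frac1\delta-1}=1$, so $\tau_x = \tau = \mu/L$; the inequality just proved reads $L\ge\mu$, whence $\tau\le 1$, while $\mu>0$ together with finiteness of $L$ gives $\tau>0$, i.e. $\tau\in(0,1]$.

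I expect the main obstacle to be this mean value step: one must invoke Lebourg's theorem and the Clarke chain rule for the restriction in a way that produces a genuine subgradient $\zeta\in\partial f(y_0)$ realizing the average slope of $f$ along the segment, and then observe that the intermediate point $y_0$ is strictly closer to $\mathcal X^*$ than $x$ (so $y_0\in\mathcal T_1$), which is exactly what makes the comparison $\|\zeta\|\le L$ available. The auxiliary finiteness of $L$, needed only for the lower bound $\tau>0$ in the $\delta=1$ case, is a comparatively routine consequence of the local boundedness of the Clarke subdifferential of $f$ on the tube.
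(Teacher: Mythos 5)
Your proposal is correct and follows essentially the same route as the paper's proof: both apply a nonsmooth mean value theorem (the paper cites Mordukhovich's version, you invoke Lebourg's) along the segment from $\bar x = \operatorname{proj}_{\mathcal{X}^*}(x)$ to $x$, obtain a subgradient $\zeta$ at an intermediate point realizing the slope, combine the HEB lower bound $\mu\,\dist^{1/\delta}(x;\mathcal{X}^*) \leq f(x)-f^*$ with Cauchy--Schwarz, and observe that the intermediate point lies in $\mathcal{T}_1$ so that $\|\zeta\|\leq L$. The only differences are cosmetic (equality vs.\ inequality form of the mean value theorem, and your explicit remark that the naive paraconvexity-plus-HEB bound is too weak), so there is nothing further to reconcile.
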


\begin{proof}
    Let $x\in \mathcal{T}_{\gamma}$. If $x\in \mathcal{X}^*$, then trivially $\tau_{x}=0$ and there is nothing to prove, i.e., without loss of generality, we assume that
    $x\in \mathcal{T}_{\gamma} \setminus \mathcal{X}^*$.
    For $\bar x \in \operatorname{proj}_{\mathcal{X}^*} {(x)}$, invoking the mean value theorem \cite[Theorem 3.51]{mordukhovich2006variational}, there exist $z\in [x,\bar x)$ and $\zeta \in \partial f(z)$ satisfying
    $f(x) - f(\bar x) \leq \langle \zeta , x-\bar x \rangle.$
    Using the HEB, we come to
    $$\mu \dist^{\nicefrac{1}{\delta}}(x;\mathcal{X}^*)\leq f(x) -f(\bar x) \leq \Vert\zeta\Vert ~\Vert x-\bar x\Vert=\Vert\zeta\Vert ~\dist(x;\mathcal{X}^*).$$
    Furthermore, it is easy to see that $z\in \mathcal{T}_{\gamma}\subseteq \mathcal{T}_1$, and
    so $\Vert \zeta\Vert \leq L$.
    Thus, $\mu \dist^{\nicefrac{1}{\delta}}(x;\mathcal{X}^*)\leq L\dist(x;\mathcal{X}^*)$,
    and consequently $\tau_{x} \leq 1$.
    In the special case where $\delta=1$, it follows that $\tau_{x}=\tau=\tfrac{\mu}{L}\neq 0$.
\end{proof}


\section{Projected subgradient algorithm for paraconvex optimization}\label{sec:subGradMethod}

In this section, we present the projected subgradient methods (PSMs) for the nonsmooth and constrained paraconvex optimization problems of the form \cref{prb2} and establish their convergence analysis for several choices of step-sizes. Let us begin with the following generic PSM, where the starting point $x_0$ satisfies
\[
    x_0 \in \mathcal{T}_\gamma,
\]
as $\mathcal{T}_\gamma$ is defined in \cref{eq:Tgamma}.

\vspace{3mm}
\RestyleAlgo{boxruled}
\begin{algorithm}[H]
\DontPrintSemicolon
\KwIn{$x_0\in \mathcal{T}_\gamma$;~ $\gamma \in (0,1]$.}
\Begin
{   
    \While{the stopping criteria do not hold}{
       Choose $\zeta_k \in \partial f(x_k)$;\;
       
       Set $x_{k+1}=\operatorname{proj}_{X}{\Big(x_k - \alpha_{k} \tfrac{\zeta_k}{\Vert\zeta_k\Vert} \Big)}$ and $k=k+1$;\;
       
        }
    Set $x_{k+1}=x_k$;\;
}
\KwOut{$x_{k+1}$.} 
\caption{Generic Projected Subgradient Method (PSM)\label{alg:subgradient}}
\end{algorithm}

\vspace{3mm}
In the above algorithm, the step-size $\alpha_{k}>0$ plays a key role in the algorithm's progression.
In fact, by employing different step-sizes, such as fixed, diminishing, and Scaled Polyak's step-sizes, we can define various PSMs, where they can then be compared in terms of the convergence rates and overall numerical performance.
Recall that, by \Cref{ass}, the function $f$ is $\nu$-paraconvex on the open convex set $X'\supseteq X$.
This property implies that $f$ is locally Lipschitz continuous at each iterate $x_k\in X$; see \Cref{thm:paralip}~\ref{thm:paralip-2}.
Consequently, the Clarke's subdifferential $\partial f(x_k)$ is nonempty for all $k$.

We begin with the subsequent lemma establishing a fundamental recurrence. This result is instrumental in deriving the convergence rate of the PSMs.

\begin{lem}[{\bf Basic inequalities I}]\label{lembasicrecurrence}
    Let \(x_k\) and \(x_{k+1}\) be two consecutive iterations of \Cref{alg:subgradient} such that \(x_k \in \mathcal{T}_\gamma\) with $\gamma=\nicefrac{1}{2}$. Then, one has
    \begin{align}
        \dist^2(x_{k+1};\mathcal{X}^*) \leq \dist^2(x_{k};\mathcal{X}^*) -\tfrac{\alpha_{k}}{L} (f(x_{k}) - f^{*}) + \alpha_{k}^{2},\hspace{3mm}\label{eq-lem-basic01}\\
        \dist^2(x_{k+1};\mathcal{X}^*) \leq \dist^2(x_{k};\mathcal{X}^*) -\alpha_{k} \tau \dist^{\tfrac{1}{\delta}}( x_{k};\mathcal{X}^*)+ \alpha_{k}^{2}.\label{eq-lem-basic02}
    \end{align}
    If, in addition, $\alpha_{k}\leq \min\Big\{1, \min\{1,\tau\} (\tfrac{\mu}{2\rho})^{\tfrac{\delta}{\delta(1+\nu)-1}}\Big\}$, then $x_{k+1} \in \mathcal{T}_\gamma$.
\end{lem}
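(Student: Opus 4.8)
The plan is to handle the two basic inequalities and the tube-invariance claim separately: the first two are a direct computation, while the last requires a more delicate one-dimensional analysis of the resulting recurrence.

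For \cref{eq-lem-basic01} and \cref{eq-lem-basic02}, I would fix $\bar x \in \operatorname{proj}_{\mathcal{X}^*}(x_k)$ and use the nonexpansiveness of the Euclidean projection together with $\bar x = \operatorname{proj}_X(\bar x)$ (since $\bar x\in\mathcal{X}^*\subseteq X$) to write $\dist^2(x_{k+1};\mathcal{X}^*)\le \|x_{k+1}-\bar x\|^2 \le \bigl\|x_k-\alpha_k \tfrac{\zeta_k}{\|\zeta_k\|} - \bar x\bigr\|^2$. Expanding the square gives $\dist^2(x_k;\mathcal{X}^*) - 2\alpha_k \langle \zeta_k, x_k-\bar x\rangle/\|\zeta_k\| + \alpha_k^2$, so everything reduces to lower-bounding the normalized inner product. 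The paraconvexity characterization \cref{eq:charcterization1} applied at the pair $(x_k,\bar x)$ yields $\langle\zeta_k, x_k-\bar x\rangle \ge f(x_k)-f^* - \rho\,\dist^{1+\nu}(x_k;\mathcal{X}^*)$. This is exactly where $\gamma=\tfrac12$ enters: since $x_k\in\mathcal{T}_{1/2}$, the defining radius in \cref{eq:Tgamma} gives $\rho\,\dist^{(\delta(1+\nu)-1)/\delta}(x_k;\mathcal{X}^*) < \tfrac{\mu}{2}$, whence $\rho\,\dist^{1+\nu}(x_k;\mathcal{X}^*) < \tfrac{\mu}{2}\dist^{1/\delta}(x_k;\mathcal{X}^*)\le \tfrac12(f(x_k)-f^*)$ by the HEB \cref{eq-holerrbound}. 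Combining, $\langle\zeta_k,x_k-\bar x\rangle \ge \tfrac12(f(x_k)-f^*)$. Dividing by $\|\zeta_k\|\le L$ (valid as $x_k\in\mathcal{T}_{1/2}\subseteq\mathcal{T}_1$) produces \cref{eq-lem-basic01}, and using HEB once more to replace $f(x_k)-f^*$ by $\mu\,\dist^{1/\delta}(x_k;\mathcal{X}^*)$ together with $\tau=\mu/L$ produces \cref{eq-lem-basic02}.

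For the tube-invariance I would argue directly from \cref{eq-lem-basic02}. Abbreviating $d=\dist(x_k;\mathcal{X}^*)$ and $R=(\tfrac{\mu}{2\rho})^{\delta/(\delta(1+\nu)-1)}$ (the radius of $\mathcal{T}_{1/2}$), it suffices to show $\dist^2(x_{k+1};\mathcal{X}^*)\le \phi(d):=d^2-\alpha_k\tau d^{1/\delta}+\alpha_k^2 < R^2$ for every $d\in[0,R)$. The key structural observation is that \Cref{ass} forces $\delta>\tfrac{1}{1+\nu}\ge\tfrac12$, so that $1/\delta\in[1,2)$; a short inspection of $\phi'(d)=2d-\tfrac{\alpha_k\tau}{\delta}d^{1/\delta-1}$ then shows $\phi$ is decreasing-then-increasing on $[0,R]$ with a unique interior minimizer, hence $\sup_{d\in[0,R)}\phi(d)=\max\{\phi(0),\phi(R)\}$. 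It remains to certify $\phi(0)=\alpha_k^2\le R^2$ and $\phi(R)=R^2-\alpha_k\tau R^{1/\delta}+\alpha_k^2\le R^2$ under the step-size cap: the former is immediate from $\alpha_k\le\min\{1,\tau\}R\le R$, and the latter reduces to the smallness of $\alpha_k$ relative to $\tau R^{1/\delta}$, which I would extract from the step-size bound together with the relation $\tau\,\dist^{1/\delta-1}\le 1$ on $\mathcal{T}_1$ furnished by \cref{lem:tau}.

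The main obstacle is precisely this last certification. The recurrence \cref{eq-lem-basic02} only guarantees a genuine decrease when $\alpha_k\le\tau d^{1/\delta}$, a condition that breaks down for iterates very close to $\mathcal{X}^*$, so one cannot simply assert monotone contraction of the distance; one instead needs to rule out an overshoot past $R$ in the regime where $d$ is small but the step is comparatively large. The delicate bookkeeping is tying the step-size ceiling $\min\{1,\min\{1,\tau\}R\}$ to the endpoint value $\phi(R)$ through the exponent $\delta/(\delta(1+\nu)-1)$, and the reduction ``$\delta>\tfrac12\ \Rightarrow$ maximum of $\phi$ at the endpoints'' is exactly what turns this into a clean two-point check rather than an optimization over all of $[0,R)$.
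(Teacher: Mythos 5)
Your derivation of \cref{eq-lem-basic01} and \cref{eq-lem-basic02} is correct and is essentially the paper's own argument: nonexpansiveness of the projection, the characterization \cref{eq:charcterization1} at the pair $(x_k,\bar x)$, and the radius of $\mathcal{T}_{\gamma}$ with $\gamma=\tfrac12$ combined with HEB to absorb $\rho\,\dist^{1+\nu}(x_k;\mathcal{X}^*)$ into $\tfrac12(f(x_k)-f^*)$. (Only minor omission: as in the paper, dispose first of the case $\zeta_k=0$, where $x_k$ is stationary and hence lies in $\mathcal{X}^*$ by \cref{lem-dis2}, so that the normalization $\zeta_k/\|\zeta_k\|$ is licit.) Your treatment of the tube invariance is organized differently from the paper's --- a single quasiconvexity argument for $\phi(d)=d^2-\alpha_k\tau d^{1/\delta}+\alpha_k^2$ on $[0,R]$ in place of the paper's two cases $\alpha_k\le\tau d^{1/\delta}$ (monotone decrease) and $\alpha_k>\tau d^{1/\delta}$ (convexity of $t\mapsto t-\alpha_k\tau t^{1/(2\delta)}+\alpha_k^2$ on $[0,(\alpha_k/\tau)^{2\delta}]$) --- but the two routes are equivalent in strength: your endpoint checks $\phi(0)\le R^2$ and $\phi(R)\le R^2$ amount to $\alpha_k\le R$ and $\alpha_k\le\tau R^{1/\delta}$, and the paper's case~(ii) bound $\max\{\alpha_k^2,(\alpha_k/\tau)^{2\delta}\}\le R^2$ demands exactly the same two inequalities.

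The genuine gap is the final certification, precisely where you defer it, and the tool you propose cannot close it. \cref{lem:tau} yields $\tau\,\dist^{1/\delta-1}(x;\mathcal{X}^*)\le 1$ on the tube, which at $\dist=R$ reads $\tau R^{1/\delta}\le R$: this is the \emph{reverse} of what you need. It tells you that the threshold $\tau R^{1/\delta}$ sits below $R$, while the stated cap only guarantees $\alpha_k\le\min\{1,\tau\}R$; so when $R<1$ and $\delta<1$ there are admissible step-sizes violating $\alpha_k\le\tau R^{1/\delta}$. Concretely, take $\nu=1$, $\delta=0.6$, $\tau=\tfrac12$, $\tfrac{\mu}{2\rho}=\tfrac12$: then $R=\tfrac18$, the cap permits $\alpha_k=\tfrac1{16}$, yet $\tau R^{1/\delta}=\tfrac1{64}$, so $\phi(R)=R^2+\alpha_k(\alpha_k-\tau R^{1/\delta})>R^2$ and your supremum over $[0,R)$ genuinely exceeds $R^2$; no rearrangement of the same bound can then give invariance.

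You should know, however, that this defect is inherited from the paper rather than introduced by you: the paper's last chain of inequalities silently requires $\alpha_k^{2\delta}\le\min\{1,\tau^{2\delta}\}\big(\tfrac{\mu}{2\rho}\big)^{\tfrac{2\delta}{\delta(1+\nu)-1}}$, i.e., $\alpha_k\le\min\{1,\tau\}\big(\tfrac{\mu}{2\rho}\big)^{\tfrac{1}{\delta(1+\nu)-1}}=\min\{1,\tau\}R^{1/\delta}$, which is \emph{not} implied by the stated hypothesis $\alpha_k\le\min\{1,\tau\}R$ unless $\delta=1$ or $\mu\ge 2\rho$ (equivalently $R\ge1$). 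So both arguments close under the corrected cap with exponent $\tfrac{1}{\delta(1+\nu)-1}$, and both fail for the stated one in the regime $\mu<2\rho$; the honest conclusion is that the lemma's step-size hypothesis carries the wrong exponent, and your proof becomes complete (and slightly cleaner than the paper's) once $\alpha_k\le\min\{1,\tau\}R^{1/\delta}$ is assumed, whereas the appeal to \cref{lem:tau} must be dropped.
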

\begin{proof}
    For \Cref{alg:subgradient}, if $\zeta_k=0$, the claims are evident then from $x_{k+1}=x_{k}\in \mathcal{X}^*$.
    Without loss of generality, we assume $\zeta_k\neq 0$. To show \cref{eq-lem-basic01}, let $x^* \in \operatorname{proj}_{\mathcal{X}^*}(x_{k})$.
    Then, $\dist(x_{k};\mathcal{X}^*) = \Vert x_{k}-x^* \Vert$ and $f(x^{*})= f^*$.
    Applying on the nonexpansiveness of $\operatorname{proj}_{X}$, it holds that
    \begin{equation}\label{eq:lembasic1}
        \begin{array}{ll}
             \dist^2(x_{k+1};\mathcal{X}^*) &\leq \Vert x_{k+1}- x^* \Vert^{2} \leq \Vert(x_{k}-x^*)-\alpha_{k} \tfrac{\zeta_{k}}{\Vert \zeta_{k} \Vert}\Vert^{2} = \Vert x_{k}-x^* \Vert^{2}+\tfrac{2 \alpha_{k}}{\Vert\zeta_{k}\Vert} \langle\zeta_{k}, x^* - x_{k}\rangle + \alpha_{k}^{2} \\[2mm]
            & \leq  \dist^2(x_{k};\mathcal{X}^*)+\tfrac{2 \alpha_{k}}{\Vert \zeta_{k} \Vert} \left(f^{*}-f(x_{k})+ \rho \dist^{1+\nu}(x_{k};\mathcal{X}^*)\right) + \alpha_{k}^{2},
        \end{array}
    \end{equation}
    where the last inequality comes from $\nu$-paraconvexity of $f$, \cref{thm:f-chara}~\ref{thm:f-chara_p1}.
    Now, inasmuch as $x_{k}\in \mathcal{T}_\gamma\subseteq \mathcal{T}_{1}$, we have $\tfrac{\alpha_{k}}{L}\leq \tfrac{\alpha_{k}}{\Vert \zeta_{k} \Vert}$ and $\dist(x_{k};\mathcal{X}^*) < \left(\tfrac{\mu}{2\rho}\right)^{\tfrac{\delta}{\delta(1+\nu)-1}}$, i.e.,
    $$\rho \dist^{1+\nu}(x_{k};\mathcal{X}^*) < {\tfrac{1}{2}}\,\mu \dist^{\tfrac{1}{\delta}}(x_{k};\mathcal{X}^*) \leq {\tfrac{1}{2}}(f(x_{k})-f^{*}),$$
    using the HEB.
    Substituting these bounds into \cref{eq:lembasic1} results in \cref{eq-lem-basic01}, i.e.,
    \begin{align*}
        \dist^2(x_{k+1};\mathcal{X}^*)
        \leq \dist^2(x_{k};\mathcal{X}^*)+\tfrac{2\alpha_{k}}{\Vert \zeta_{k} \Vert} \Big(f^{*}-f(x_{k})+ {\tfrac{1}{2}} (f(x_{k})-f^{*})\Big) + \alpha_{k}^{2}\leq \dist^2(x_{k};\mathcal{X}^*)-\tfrac{\alpha_{k}}{L} \left(f(x_{k})-f^{*}\right) + \alpha_{k}^{2}.
    \end{align*}
    Moreover, the argument for \cref{eq-lem-basic02} is derived from \cref{eq-lem-basic01}, relying on the HEB, i.e.,
    $$
    \begin{aligned}
        \dist^2(x_{k+1};\mathcal{X}^*) &\leq
        \dist^2(x_{k};\mathcal{X}^*)-\tfrac{\alpha_{k}}{L} \left(f(x_{k})-f^{*}\right) + \alpha_{k}^{2}
        \leq \dist^2(x_{k};\mathcal{X}^*) - \alpha_{k} \tau \dist^{\tfrac{1}{\delta}}(x_{k};\mathcal{X}^*)  + \alpha_{k}^{2}.
    \end{aligned}
    $$
    For deriving the last part, $x_{k+1} \in \mathcal{T}_\gamma$, we consider two cases: (i) $\alpha_{k} \leq \tau\dist^{\tfrac{1}{\delta}}(x_{k};\mathcal{X}^*)$; (ii) $\alpha_{k} > \tau\dist^{\tfrac{1}{\delta}}(x_{k};\mathcal{X}^*)$.
    
    In Case~(i), $\alpha_{k} \leq \tau\dist^{\tfrac{1}{\delta}}(x_{k};\mathcal{X}^*)$, applying the above inequality to \cref{eq-lem-basic01} yields
    $$
    \dist^2(x_{k+1};\mathcal{X}^*) \leq \dist^2(x_{k};\mathcal{X}^*) -\alpha_{k} \tau \dist^{\tfrac{1}{\delta}}(x_{k};\mathcal{X}^*) + \alpha_{k}^{2} \leq \dist^2(x_{k};\mathcal{X}^*)
    < \left(\tfrac{\mu}{2\rho}\right)^{\tfrac{2\delta}{1-\delta(1+\nu)}}.
    $$
    i.e., $x_{k+1}\in \mathcal{T}_\gamma$. 
    
    For Case~(ii), $\alpha_{k} > \tau\dist^{\tfrac{1}{\delta}}(x_{k};\mathcal{X}^*)$, leading to
    \begin{equation}\label{eq:lembasic}
        \dist(x_{k};\mathcal{X}^*) < \left(\tfrac{\alpha_{k}}{\tau}\right)^{\delta}.
    \end{equation}
    Take the convex function
    $\varphi(t) := t - \alpha_{k} \tau t^{\tfrac{1}{2\delta}}   + \alpha_{k}^{2}$
    on $[0,(\tfrac{\alpha_{k}}{\tau})^{2\delta}]$
    where the maximum is achieved at $0$ or $(\tfrac{\alpha_{k}}{\tau})^{2\delta}$.
    On the basis of \cref{eq-lem-basic01} and \cref{eq:lembasic}, $\dist^2(x_{k+1};\mathcal{X}^*) \leq \varphi (\dist^{2}(x_{k};\mathcal{X}^*))$ and
    $\dist^{2}(x_{k};\mathcal{X}^*) \in [0,(\tfrac{\alpha_{k}}{\tau})^{2\delta}]$,
    i.e.,
    \begin{align*}
        \dist^2(x_{k+1};\mathcal{X}^*) &\leq \max \left\{\varphi(t): ~ t\in [0,(\tfrac{\alpha_{k}}{\tau})^{2\delta}]\right\}= \max \left\{ \alpha^{2}_{k},~
        (\tfrac{\alpha_{k}}{\tau})^{2\delta} - \alpha_{k} \tau \tfrac{\alpha_{k}}{\tau}  + \alpha^{2}_{k}\right\}\leq \max \left\{ \alpha^{2\delta}_{k},~
        (\tfrac{\alpha_{k}}{\tau})^{2\delta} \right\}\\
        &\leq \max \left\{ 1,~
        \tfrac{1}{\tau^{2\delta}} \right\}\min\{1, \tau^{2\delta}\}(\tfrac{\mu}{2\rho})^{\tfrac{2\delta}{\delta(1+\nu)-1}} 
        =(\tfrac{\mu}{2\rho})^{\tfrac{2\delta}{\delta(1+\nu)-1}},
    \end{align*}
    where the second and third inequalities are derived from the bound on $\alpha_{k}$, i.e., $x_{k+1}\in \mathcal{T}_\gamma$.
\end{proof}

Next, we drive an upper bound for the difference between the lowest value of the function over the iterations and the optimal value. Let us define $f_{k}^{*}:=\min\{f(x_{i}) : i=1,2,\ldots,k\}$ for all $k\ge 0$.

\begin{lem}[{\bf Basic inequalities II}]\label{lembasicrecurrence2}
    Let $\{x_{k}\}_{k\geq 0}\subseteq \mathcal{T}_\gamma$ with $\gamma=\nicefrac{1}{2}$ is generated by \Cref{alg:subgradient}. Then, one has
    \begin{align*}
        f(x_{k}) - f^{*} \leq \frac{L\dist^2(x_{k};\mathcal{X}^*)+ L \alpha_{k}^{2}}{\alpha_{k}},~~\\
        f_{k}^{*}- f^{*} \leq \frac{L\dist^2(x_{1};\mathcal{X}^*)+ L \sum_{i=1}^{k}\alpha_{i}^{2}}{\sum_{i=1}^{k}\alpha_{i}}.
    \end{align*}
\end{lem}
\begin{proof}
    These inequalities follow directly 
    from \cref{lembasicrecurrence}.
\end{proof}

\subsection{{\bf PSM with constant step-size}}\label{sec:subGradMethodWithConstantstep-size}
Here, we analyze a PSM with a constant step-size, $\alpha_{k}=\alpha>0$ for all $k$.
We show that, with appropriate initialization, the sequence $D_k:= \dist^2(x_k; \mathcal{X}^*)$ converges linearly below a certain fixed threshold.

\begin{thm}[{\bf Convergence of constant step-size method}]\label{thm:constants}
    Let $\tau=\tfrac{\mu}{L}$. Fix a constant step-size $\alpha$ satisfying
    $$
    0<\alpha<\min\left\{\tfrac{2\delta}{\tau}\left(\tfrac{\mu}{2\rho} \right)^{\tfrac{2\delta-1}{\delta(1+\nu)-1}},~\tfrac{\tau}{(\tau^{2\delta}+1)^{\tfrac{1}{2\delta}}}\left(\tfrac{\mu}{2\rho} \right)^{\tfrac{1}{\delta(1+\nu)-1}},~1\right\}.
    $$
    Let $\{x_{k}\}_{k\geq 0}$ be generated by \Cref{alg:subgradient} with $\gamma=\nicefrac{1}{2}$ and the constant step-size $\alpha_{k}=\alpha$. Define the constants
    $$D_{*}:= \left(\tfrac{\alpha}{\tau}\right)^{2\delta},\quad\quad
    q:= 1- \tfrac{\alpha\tau}{2\delta}\left(\tfrac{2\rho}{\mu} \right)^{\tfrac{2\delta-1}{\delta(1+\nu)-1}},\quad\quad 
     \mathcal{D}^{2}:=\max \left\{D_{0}, \alpha^{2}+D_{*}\right\}.$$
    Then, $0<q<1$, and for any $ k\in \N$, we have
    \begin{equation}\label{eq:constant:iteration}
         \sqrt{D_{k}} \leq \mathcal{D} \leq \left(\tfrac{\mu}{2\rho}\right)^{\tfrac{\delta}{\delta(1+\nu)-1}},
        \quad \text { and } \quad
        D_{k}-D_{*} \leq \max \left\{q^{k}\left(D_{0}-D_{*}\right),
        \alpha^{2}\right\}.   
    \end{equation}
\end{thm}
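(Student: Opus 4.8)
The plan is to collapse the whole analysis onto the scalar recurrence governing $D_k=\dist^2(x_k;\mathcal{X}^*)$. Setting $\psi(t):=t^{1/(2\delta)}$ and noting $\dist^{1/\delta}(x_k;\mathcal{X}^*)=\psi(D_k)$, inequality \cref{eq-lem-basic02} of \Cref{lembasicrecurrence} reads $D_{k+1}\le D_k-\alpha\tau\psi(D_k)+\alpha^2$ whenever $x_k\in\mathcal{T}_{1/2}$. Since $\psi(D_*)=\alpha/\tau$, we have $\alpha^2=\alpha\tau\psi(D_*)$, so the recurrence rewrites as
\[
    D_{k+1}-D_*\le (D_k-D_*)-\alpha\tau\big(\psi(D_k)-\psi(D_*)\big).
\]
The structural fact I would extract first is that \Cref{ass} forces $\delta>\tfrac{1}{1+\nu}\ge\tfrac12$ (because $\nu\le1$), hence $\tfrac{1}{2\delta}<1$ and $\psi$ is strictly concave on $[0,\infty)$; this concavity drives everything.

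First I would dispose of the elementary bounds. Writing $R:=(\tfrac{\mu}{2\rho})^{\delta/(\delta(1+\nu)-1)}$, the tube $\mathcal{T}_{1/2}$ from \cref{eq:Tgamma} is exactly $\{\dist(\cdot;\mathcal{X}^*)<R\}$ and $R^2=(\tfrac{\mu}{2\rho})^{2\delta/(\delta(1+\nu)-1)}$. The inequality $q<1$ is clear since the subtracted term is positive, while $q>0$ is equivalent to $\tfrac{\alpha\tau}{2\delta}(\tfrac{2\rho}{\mu})^{(2\delta-1)/(\delta(1+\nu)-1)}<1$, i.e. $\alpha<\tfrac{2\delta}{\tau}(\tfrac{\mu}{2\rho})^{(2\delta-1)/(\delta(1+\nu)-1)}$, precisely the first entry of the minimum defining the step-size. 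For the radius estimate, $D_0<R^2$ follows from $x_0\in\mathcal{T}_{1/2}$, while using $\alpha<1$ (so $\alpha^2\le\alpha^{2\delta}$) gives $\alpha^2+D_*\le\alpha^{2\delta}\tfrac{\tau^{2\delta}+1}{\tau^{2\delta}}$, and raising the second entry of the step-size minimum to the power $2\delta$ yields $\alpha^{2\delta}<\tfrac{\tau^{2\delta}}{\tau^{2\delta}+1}R^2$; together these give $\alpha^2+D_*<R^2$ and hence $\mathcal{D}^2=\max\{D_0,\alpha^2+D_*\}<R^2$, the right-hand inequality in \cref{eq:constant:iteration}.

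Next I would run one induction establishing simultaneously that $D_k\le\mathcal{D}^2$ (so $x_k\in\mathcal{T}_{1/2}$ and the recurrence keeps applying) and the sharp one-step bounds, splitting along the two regimes $D_k\ge D_*$ and $D_k<D_*$ exactly as in \Cref{lembasicrecurrence}. When $D_k\ge D_*$, concavity of $\psi$ and the mean value theorem give $\psi(D_k)-\psi(D_*)\ge\psi'(D_k)(D_k-D_*)\ge\psi'(R^2)(D_k-D_*)$, using that $\psi'$ is decreasing and $D_k\le R^2$; a direct computation shows $\psi'(R^2)=\tfrac{1}{2\delta}(R^2)^{1/(2\delta)-1}=\tfrac{1}{2\delta}(\tfrac{2\rho}{\mu})^{(2\delta-1)/(\delta(1+\nu)-1)}$, so that $1-\alpha\tau\psi'(R^2)=q$ and thus $D_{k+1}-D_*\le q(D_k-D_*)$, which also forces $D_{k+1}\le D_k\le\mathcal{D}^2$. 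When $D_k<D_*$, the function $u(t):=t-\alpha\tau\psi(t)$ is convex on $[0,D_*]$ (again by concavity of $\psi$), so it is maximized at an endpoint, $u(D_k)\le\max\{u(0),u(D_*)\}=\max\{0,D_*-\alpha^2\}$; adding $\alpha^2$ gives $D_{k+1}\le\max\{\alpha^2,D_*\}\le\mathcal{D}^2$ and $D_{k+1}-D_*\le\alpha^2$.

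Finally I would feed these one-step estimates into an induction for $D_k-D_*\le\max\{q^k(D_0-D_*),\alpha^2\}$, with base case $k=0$ immediate. If $D_k<D_*$, the bound $D_{k+1}-D_*\le\alpha^2$ already closes the step; if $D_k\ge D_*$, I split on whether $q^k(D_0-D_*)\ge\alpha^2$: in that case contraction by $q$ yields $q^{k+1}(D_0-D_*)$, otherwise $0\le D_k-D_*\le\alpha^2$ and $q\in(0,1)$ give $D_{k+1}-D_*\le q\alpha^2\le\alpha^2$. I expect the main obstacle to be the secant-slope estimate in the regime $D_k\ge D_*$: producing a uniform lower bound on $\tfrac{\psi(D_k)-\psi(D_*)}{D_k-D_*}$ that equals the advertised rate constant. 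This is precisely where concavity of $\psi$ (equivalently $\delta>\tfrac12$, which must be read off from $\delta>\tfrac{1}{1+\nu}$) and the confinement $D_k\le R^2$ inside the tube are both indispensable, and where the bookkeeping of the three exponents in the step-size bound must be arranged so that $\psi'(R^2)$ and $1-q$ coincide.
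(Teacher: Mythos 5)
Your proposal is correct and follows essentially the same route as the paper's proof: the same recurrence from \cref{eq-lem-basic02} of \Cref{lembasicrecurrence}, the same verification that the two entries of the step-size bound yield $q\in(0,1)$ and $\mathcal{D}^2<\left(\tfrac{\mu}{2\rho}\right)^{\tfrac{2\delta}{\delta(1+\nu)-1}}$, and the same induction with the case split $D_k\ge D_*$ versus $D_k<D_*$, where concavity of $t\mapsto t^{1/(2\delta)}$ converts the secant slope into the rate $q$. The only cosmetic differences are that in the regime $D_k<D_*$ you maximize the convex map $t\mapsto t-\alpha\tau t^{1/(2\delta)}$ over endpoints while the paper simply drops the negative term, and your closing induction step (splitting on whether $q^k(D_0-D_*)\ge\alpha^2$) is spelled out more carefully than the paper's, cleanly delivering the stated bound with $\alpha^2$ rather than the $\alpha^2+D_*$ that appears in the paper's final display.
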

\begin{proof}
    We first verify claims about $\mathcal{D}$ and $q$, and then use the induction to establish the iteration-dependent inequalities.
    First, we will verify $\mathcal{D} \leq \left(\tfrac{\mu}{2\rho}\right)^{\tfrac{\delta}{\delta(1+\nu)-1}}$.
    Inasmuch as $x_0\in \mathcal{T}_\gamma$, $D_0 \leq \left(\tfrac{\mu}{2\rho}\right)^{\tfrac{2\delta}{\delta(1+\nu)-1}}$ due to definition of $\mathcal{T}_\gamma$.
    Moreover, using the upper bound of $\alpha$, we get
    \begin{align*}
        \alpha^{2}+D_{*}
        \leq \alpha^{2\delta} + \tfrac{\alpha^{2\delta}}{\tau^{2\delta}}
        =\alpha^{2\delta} \tfrac{\tau^{2\delta}+1}{\tau^{2\delta}} \leq \left(\tfrac{\mu}{2\rho} \right)^{\tfrac{2\delta}{\delta(1+\nu)-1}},
    \end{align*}
    i.e.,   $\mathcal{D} \leq \left(\tfrac{\mu}{2\rho}\right)^{\tfrac{2\delta}{\delta(1+\nu)-1}}.$
    Regarding to $q \in(0,1)$, it is  clear that  $q<1$.
    Considering
    $$
    0 < \alpha <\tfrac{2\delta}{\tau}\left(\tfrac{\mu}{2\rho} \right)^{\tfrac{2\delta-1}{\delta(1+\nu)-1}},
    $$
    and applying the basic calculus, it holds that $q>0$.
    We now prove the inequalities in \Cref{eq:constant:iteration} using induction. At $k=0$, the inequalities hold trivially. Now by induction, assume that for some $k \in \mathbb{N}$, the following holds (inductive hypothesis):
    $$
    \sqrt{D_{i}} \leq \mathcal{D} \, \,  \text { and } \,\, D_{i}-D_{*} \leq \max \left\{q^{i}\left(D_{0}-D_{*}\right), \alpha^{2}\right\}, \quad \forall i=0,1, \ldots, k.
    $$
    We will verify these claims for $(k+1)$-th step. From \cref{lembasicrecurrence}, and the concavity of function $t^{\tfrac{1}{2\delta}}$ on $[0,+\infty)$, it follows that
    \begin{align*}
        D_{k+1} \leq D_{k} -\alpha \tau D_{k}^{\tfrac{1}{2\delta}}+\alpha^{2}
        =D_{k} -\alpha \tau\left( D_{k}^{\tfrac{1}{2\delta}} - \tfrac{\alpha}{\tau}\right)
        =D_{k} -\alpha \tau \left(D_{k}^{\tfrac{1}{2\delta}} - D_{*}^{\tfrac{1}{2\delta}}\right)
        \leq D_{k} - \tfrac{\alpha\tau}{2\delta D_{k}^{1-\tfrac{1}{2\delta}}}(D_{k}-D_{*}),
    \end{align*}
    i.e.,
    \begin{equation*}
        D_{k+1}-D_{*} \leq \bigg(1 - \tfrac{\alpha\tau}{2\delta D_{k}^{1-\tfrac{1}{2\delta}}}\bigg)(D_{k}-D_{*}).
    \end{equation*}
    We have two cases: (i) $D_{k} \geq D_{*}$; (ii) $D_{k} < D_{*}$.\\
    In Case~(i), since  $D_{k} \leq \mathcal{D}^{2} \leq \left(\tfrac{\mu}{2\rho}\right)^{\tfrac{2\delta}{\delta(1+\nu)-1}}$, we obtain
    $$
    D_{k+1}-D_{*} 
    \leq \bigg(1 - \tfrac{\alpha\tau}{2\delta D_{k}^{1-\tfrac{1}{2\delta}}}\bigg)(D_{k}-D_{*})
    \leq \bigg(1 - \tfrac{\alpha\tau}{2\delta \left(\tfrac{\mu}{2\rho}\right)^{\tfrac{2\delta}{\delta(1+\nu)-1} \tfrac{2\delta-1}{2\delta}}}\bigg)(D_{k}-D_{*})
    =q\left(D_{k}-D_{*}\right).
    $$
    Moreover, $\sqrt{D_{k+1}} \leq \sqrt{D_{k}} \leq \mathcal{D}$, inasmuch as $0<q<1$.
    In Case~(ii), $D_{k}<D_{*}$, relying once again on \cref{lembasicrecurrence}, we get
    $$
    D_{k+1} - D_{*} \leq D_{k} -\alpha \tau D_{k}^{\tfrac{1}{2\delta}}+\alpha^{2} - D_{*}\leq
    D_{k} +\alpha^{2} - D_{*}< \alpha^{2}.
    $$
    In addition, $\sqrt{D_{k+1}} \leq \sqrt{\alpha^{2}+ D_{*}} \leq \mathcal{D}$.
    Hence, in both cases and by the induction assumption, it can be deduced
    $$
        D_{k+1}-D_{*} \leq \max \left\{q\left(D_{k}-D_{*}\right), \alpha^{2}\right\}\leq \max \left\{q^{k+1}\left(D_{0}-D_{*}\right),
        \alpha^{2}\right\}.
    $$
    Since $\sqrt{D_{k+1}} \leq \mathcal{D}$, by induction, the claims hold for all $k\geq 0$.
\end{proof}

\cref{thm:constants} establishes that the sequence
$D_{k}$ decrease to a value below $D_{*}$ at a linear rate. Furthermore, even if $D_k$ becomes smaller than $D_{*}$, it remains close to this threshold.

The subsequent result is a straightforward consequence of \cref{thm:constants} for $\delta= 1$ under \Cref{ass}.

\begin{cor}\label{cor:constants}
    Let $\delta= 1$ and $\tau=\nicefrac{\mu}{L}$. 
    Let $\{x_{k}\}_{k\ge 0}$ be generated by \Cref{alg:subgradient} with $\gamma=\nicefrac{1}{2}$ and the constant step-size $\alpha_{k}=\alpha$ satisfying $0<\alpha<\min\big\{\tfrac{\tau}{\sqrt{2}}\big(\tfrac{\mu}{2\rho} \big)^{\nicefrac{1}{\nu}},~1\big\}$. Then, 
    $0<q:= 1- \tfrac{\alpha\tau}{2}\left(\tfrac{2\rho}{\mu} \right)^{\nicefrac{1}{\nu}}$ and for any $k\in \N$, one has
    $$
    D_{k} \leq \max \left\{D_{0}, \alpha^{2}+D_{*}\right\} \leq \Big(\dfrac{\mu}{2\rho}\Big)^{\nicefrac{2}{\nu}},
    \quad \text { and } \quad
    D_{k}-\tfrac{\alpha^{2}}{\tau^{2}} \leq \max \left\{q^{k}\left(D_{0}-\tfrac{\alpha^{2}}{\tau^{2}}\right), \alpha^{2}\right\}.
    $$
\end{cor}

\begin{proof}
    By straightforward calculations, the result follows directly from \cref{thm:constants}.
\end{proof}

The next theorem provides an upper bound, determined by the step-size, for the distance of the lowest estimated value over iterations from the optimal value.

\begin{thm}[{\bf Convergence rate of constant step-size method}]\label{thm-css}
    Let $\{x_{k}\}_{k\ge 1}$ be generated by \Cref{alg:subgradient} with $\gamma=\nicefrac{1}{2}$ and the constant step-size $\alpha_{k}=\alpha$ satisfying
    $
    0<\alpha<\min\big\{1,\,\min\{1,\tfrac{\mu}{L}\} (\tfrac{\mu}{2\rho})^{\tfrac{1}{\delta(1+\nu)-1}}\big\}.
    $ Then,
    
    $$
    0\leq f^{*}_{k} - f^{*} \leq 2L\alpha, \quad\quad\forall k\geq \tfrac{1}{\alpha^{2}}\left(\tfrac{\mu}{2\rho}\right)^{\tfrac{2\delta}{\delta(1+\nu)-1}}.
    $$
\end{thm}
\begin{proof}
    This inequality follows directly form \Cref{lembasicrecurrence2}, together with
    \cref{lembasicrecurrence}.
\end{proof}

\subsection{{\bf PSMs with diminishing step-sizes}}\label{sec:subGradMethodWithDiminishingstep-size}

In the previous section, we analyzed the constant step-size scheme and established convergence results within a fixed threshold.
Here, we shift our focus to achieving convergence to the optimal solution by studying the PSM with a diminishing step-size.
The next result validates the convergence of 
$\{f^{*}_{k}\}_{k\ge 1}$ and of a subsequence of $\{x_k\}_{k\ge 1}$ in which $\{x_{k}\}_{k\ge 1}$ is generated by the PSM with Nonsummable Diminishing (ND) step-size satisfying
$$
\alpha_{k}\geq 0,\quad \lim_{k\to\infty}\alpha_{k}=0, \quad \sum_{k=1}^{\infty}\alpha_{k} =\infty.
$$

\begin{thm}[{\bf Convergence of ND method}]\label{thm-nsds}
    Let $\{x_{k}\}_{k\geq 1}$ be generated by \Cref{alg:subgradient} with $\gamma=\nicefrac{1}{2}$ and ND step-size $\alpha_k$ 
    satisfying
    $0< \alpha_{k}<\min \Big\{1,\,\min\{1,\tfrac{\mu}{L}\} (\tfrac{\mu}{2\rho})^{\tfrac{1}{\delta(1+\nu)-1}}\Big\}.$
    Then,
    $$
    \lim_{k\to \infty} f^{*}_{k} = f^{*},\quad\quad
    \liminf_{k\to\infty} f(x_{k}) = f^{*},\quad\text{and}\quad \liminf_{k\to\infty} \dist(x_{k};\mathcal{X}^*)=0.
    $$
    Moreover, if $\{x_{k}\}_{k\geq 1}$ is a bounded sequence, it has a convergent subsequence to some optimal solution $x^{*}\in \mathcal{X}^*$.
\end{thm}
\begin{proof}
    According to \cref{lembasicrecurrence,lembasicrecurrence2}, we have
    $$
    0\leq f_{k}^{*}- f^{*} \leq \frac{L\dist^2(x_{1};\mathcal{X}^*)+ L \sum_{i=1}^{k}\alpha_{i}^{2}}{\sum_{i=1}^{k}\alpha_{i}}.
    $$
    Let us consider a fixed $j\in \mathbb{N}$.
    For $k>j$, it holds that
    $$
        0\leq \frac{\sum_{i=1}^{k}\alpha_{i}^{2}}{\sum_{i=1}^{k}\alpha_{i}}
        =\frac{\sum_{i=1}^{j-1}\alpha_{i}^{2}}{\sum_{i=1}^{k}\alpha_{i}} + \frac{\sum_{i=j}^{k}\alpha_{i}^{2}}{\sum_{i=1}^{k}\alpha_{i}}
        \leq\frac{\sum_{i=1}^{j-1}\alpha_{i}^{2}}{\sum_{i=1}^{k}\alpha_{i}} + \displaystyle\max_{j\leq i\leq k}\{\alpha_{i}\}\frac{\sum_{i=j}^{k}\alpha_{i}}{\sum_{i=1}^{k}\alpha_{i}}
        \leq\frac{\sum_{i=1}^{j-1}\alpha_{i}^{2}}{\sum_{i=1}^{k}\alpha_{i}} +
        \displaystyle\max_{j\leq i}\{\alpha_{i}\}.
    $$
    Taking the limit as $k\to \infty$, we get
    $0\leq\tfrac{\sum_{i=1}^{\infty}\alpha_{i}^{2}}{\sum_{i=1}^{\infty}\alpha_{i}}\leq \max_{j\leq i}\{\alpha_{i}\}$, for each $j\in \N$.
    Now, taking limit as $j\to \infty$, we conclude that $\tfrac{\sum_{i=1}^{\infty}\alpha_{i}^{2}}{\sum_{i=1}^{\infty}\alpha_{i}}= 0$.
    On the other hand, $\tfrac{L\dist^2(x_{1};\mathcal{X}^*)}{\sum_{i=1}^{\infty}\alpha_{i}}= 0$.
    Thus, $\displaystyle\lim_{k\to \infty} f_{k}^{*}= f^{*}$.
    
    Next, we demonstrate that $\displaystyle\liminf_{k\to\infty} f(x_{k}) = f^{*}.$
    To this end, we construct a subsequence $\{x_{k_j}\}_{j\in \mathbb{N}}$ of $\{x_{k}\}_{k\in \mathbb{N}}$ converging to some optimal solution.
    If $f(x_{k})=f^{*}$ for some $k\in \mathbb{N}$, the result is valid.
    Otherwise, assume $f^{*}<f(x_{k})$, for all $k\in \mathbb{N}$.
    For each $k\in \mathbb{N}$, there is nondecreasing index $i_{k} \in \{1,2,\ldots,k\}$ such that $f^{*}_{k} = f(x_{i_{k}})$.
    Therefore, we have $\displaystyle\lim_{k\to \infty} f(x_{i_{k}}) =f^{*}$.
    Let us define $k_{1}:=i_{1}=1$.
    Since $f^{*}<f(x_{1})$, there exists $k\in \mathbb{N}$, such that $f(x_{i_{k}}) < f(x_{1})$.
    Now, let us set $k_{2}:=\min\{i_{k}\in \mathbb{N}: f(x_{i_{k}}) < f(x_{1})\}$, i.e., $k_{1}=1<k_{2}$ and $f(x_{k_{2}})<f(x_{k_{1}})$. Continuing this process, we construct a subsequence $x_{k_j}$ such that
    $$
    k_{1}<k_{2}<\ldots<k_{j},\quad\text{and}\quad f(x_{k_{j}})<f(x_{k_{j-1}})<\ldots<f(x_{k_{1}}),
    $$
    where $k_{t} =\min \{i_{k}\in \mathbb{N}: f(x_{i_{k}}) < f(x_{k_{t-1}})\}$ for each $1\leq t\leq j$.
    Now, noticing $\displaystyle\lim_{k\to \infty} f(x_{i_{k}})=f^{*}<f(x_{k_{j}})$, there exists $k\in \mathbb{N}$ such that $f(x_{i_{k}}) < f(x_{k_{j}})$.
    As such, setting  $k_{j+1}:=\min \{i_{k}\in \mathbb{N}: f(x_{i_{k}}) < f(x_{k_{j}})\}$,
    we have $f(x_{k_{j+1}}) < f(x_{k_{j}})$ and $k_{j}<k_{j+1}$.
    Otherwise, if $k_{j+1}<k_{j}$, $f(x_{k_{j+1}}) < f(x_{k_{j}})< f(x_{k_{j-1}})$, this leads to a contradiction.
    Thus, $\{x_{k_j}\}_{j\in\mathbb{N}}$ is a subsequence of the sequences $\{x_k\}_{k\in\mathbb{N}}$ and $\{x_{i_k}\}_{k\in\mathbb{N}}$.
    Therefore, $\displaystyle\lim_{j\to \infty}f(x_{k_j})=\displaystyle\lim_{k\to \infty} f(x_{i_{k}})= f^{*}$
    and
    $$
    f^{*}\leq \liminf_{k\to\infty} f(x_{k})
    \leq \lim_{j\to \infty}f(x_{k_j})=f^{*}.
    $$
    This implies  $
    \displaystyle\liminf_{k\to\infty} f(x_{k}) = f^{*}$. Furthermore, the HEB yields $\liminf_{k\to\infty} \dist(x_{k};\mathcal{X}^*)=0$.
    
    We address the final claim by assuming that
    $\{x_{k}\}_{k\in \N}$ is a bounded sequence.
    Consequently, the sequence $\{x_{k_j}\}_{j\in \N}$ has a convergent subsequence.
    Without loss of generality, we assume $x_{k_j}\to x^{*}$ for some $x^{*}\in X$, i.e., 
    $f^{*}=\lim_{j\to \infty}f(x_{k_j})=f(x^{*}).$
    As such, $x^{*}\in \mathcal{X}^*$, completing the proof.
\end{proof}

The previous theorem established that PSMs with nonsummable step-size exhibit subsequential convergence.
The following theorem extends this result by demonstrating the convergence of $\{f(x_k)-f^*\}_{k\geq 1}$ and $\{\dist(x_k;\mathcal{X}^*)\}_{k\geq 1}$ where $\{x_{k}\}_{k\geq 1}$ is generated by the PSM when the step-size is Square-Summable but Not Summable (SSNS) satisfying the following conditions:
$$
\alpha_{k}\geq 0, \quad \sum_{k=1}^{\infty}\alpha_{k} =\infty, \quad \sum_{k=1}^{\infty}\alpha_{k}^{2} <\infty.
$$

\begin{thm}[{\bf Convergence of SSNS method}]\label{thm-ssns}
    Let $\{x_{k}\}_{k\geq 1}$ be generated by \Cref{alg:subgradient} with $\gamma=\nicefrac{1}{2}$ and the SSNS step-size satisfying
    $0\leq \alpha_{k}<\min \Big\{1,\,\min\{1,\tfrac{\mu}{L}\} (\tfrac{\mu}{2\rho})^{\tfrac{1}{\delta(1+\nu)-1}}\Big\}.$
    Then,
    $$
    \lim_{k\to \infty} f(x_{k})= \lim_{k\to \infty} f^{*}_{k} = f^{*},\quad\text{and}\quad \lim_{k\to\infty} \dist(x_{k};\mathcal{X}^*)=0.
    $$
\end{thm}

\begin{proof}
    Invoking \cref{thm-nsds}, it is clear that
    $
    \liminf_{k\to \infty} f(x_{k})= \lim_{k\to \infty} f^{*}_{k} = f^{*},
    $
    which ensures that
    \begin{equation}\label{eq:ssns1}
        \lim_{j\to \infty} f(x_{k_{j}}) = f^{*},
    \end{equation}
    for some subsequence $\{x_{k_j}\}_{j\in\mathbb{N}}$ of $\{x_{k}\}_{k\in\mathbb{N}}$.
    We first show that $\dist(x_{k};\mathcal{X}^*)$ tends to zero.
    By \cref{lembasicrecurrence}, for each $k\in \mathbb{N}$, we have $x_{k}\in \mathcal{T}_\gamma$, and
    \begin{align}
        \dist^2(x_{k+1};\mathcal{X}^*) &\leq \dist^2(x_{k};\mathcal{X}^*) -\tfrac{\alpha_k}{L}\left(f(x_{k})-f^{*}\right)+\alpha_{k}^{2}
        \leq
        \dist^2(x_{k-1};\mathcal{X}^*) +\alpha_{k-1}^{2}+\alpha_{k}^{2}\nonumber\\[-1mm]
        &\leq \ldots\label{eq:ssns2}\leq \dist^2(x_{1};\mathcal{X}^*) +\sum_{i=1}^{k} \alpha_{i}^{2}\leq \left(\tfrac{\mu}{2\rho}\right)^{\tfrac{2\delta}{\delta(1+\nu)-1}} +\sum_{i=1}^{\infty} \alpha_{i}^{2}.
    \end{align}
    As such, $\dist(x_{k};\mathcal{X}^*)$ is a bounded sequence, inasmuch as
    $\sum_{i=1}^{\infty} \alpha_{i}^{2}<\infty$, i.e., $\dist(x_{k_j};\mathcal{X}^*)$ has a convergent subsequence.
    Without loss of generality, we assume that $\dist(x_{k_j};\mathcal{X}^*)$ converges to some $d\geq 0$, and we hence need to show $d=0$. 
    Suppose on the contrary that $d>0$, i.e., for sufficiently large $j$, $\dist(x_{k_j};\mathcal{X}^*)> \tfrac{d}{2}$.
    Using this inequality and the HEB, it follows that
    $$
    \mu\left(\tfrac{d}{2}\right)^{\nicefrac{1}{\delta}}
    < \mu\dist^{\nicefrac{1}{\delta}}(x_{k_j};\mathcal{X}^*)
    \leq f(x_{k_j})-f^{*},
    $$
    for sufficiently large $j$, which contradicts \cref{eq:ssns1}, i.e., $\dist(x_{k_j};\mathcal{X}^*)\to 0$ as $j\to\infty$.
    Let $\epsilon>0$ be arbitrary and fixed.
    There exists $j_{0}\in \mathbb{N}$ such that $\dist(x_{k_{j_{0}}};\mathcal{X}^*)<\tfrac{\epsilon^{2}}{2}$ and $\sum_{i=k_{j_{0}}}^{k} \alpha_{i}^{2}<\tfrac{\epsilon^{2}}{2}$, for each $k\geq k_{j_{0}}$.
    Following \cref{eq:ssns2}, for each $k\geq k_{j_{0}}$, yields
    \begin{center}
        $
        \dist^2(x_{k};\mathcal{X}^*)\leq \dist^2(x_{k_{j_{0}}};\mathcal{X}^*) +\sum_{i=k_{j_{0}}}^{k} \alpha_{i}^{2} <
        \tfrac{\epsilon^{2}}{2}+\tfrac{\epsilon^{2}}{2}=\epsilon^{2}.
        $
    \end{center}
    Consequently, $\dist(x_{k};\mathcal{X}^*)\to 0$.
    On the other hand, Using the mean value theorem \cite[Theorem 3.51]{mordukhovich2006variational} it can be concluded that
    $$0\le f(x_k) - f^* \leq L\dist(x_k;\mathcal{X}^*),\quad\quad k\ge 1,$$
    confirming $f(x_k)\to f^*$ and
    adjusting our desired results.
\end{proof}

We next analyze the convergence of the PSMs with specific step-size schemes, including diminishing step-sizes and geometrically decaying step-sizes.
First, we focus on the PSM with diminishing step-size of the form, $\alpha_{k}=\lambda (k+ k_{0})^{-r}$ where $\lambda, k_{0}>0$ and $0<r<1$. Such step-sizes are widely used in both stochastic and deterministic versions of the subgradient method.
The sublinear convergence of the PSM with this step-size is established in the following result.

\begin{thm}[{\bf Convergence rate of diminishing method}]\label{thm:nonsummablestep-size}
    Let $r\in (0,1)$ and $\lambda >0$.
    Set
    $$
    A:=2^{r\delta}~\sqrt{\left(\tfrac{2\lambda}{\tau}\right)^{2\delta}+\lambda^{2}},\quad \text{and}\quad  k_{0}:=\max\bigg\{ \Big(\tfrac{4r\delta A^{\tfrac{2\delta-1}{\delta}}}{\lambda\tau}\Big)^{\tfrac{1}{1-2r(1-\delta)}}, A^{\tfrac{1}{\delta r}}\left(\tfrac{2\rho}{\mu}\right)^{\tfrac{1}{r(\delta(1+\nu)-1)}},1\bigg\}.
    $$
    Let $\{x_{k}\}_{k\geq 0}$ be generated by \Cref{alg:subgradient} with $\gamma=\nicefrac{1}{2}$ and step-size $\alpha_{k}=\lambda (k+ k_{0})^{-r}$ and let the initial point $x_{0} \in \mathcal{T}_\gamma$ satisfying $\dist(x_{0};\mathcal{X}^*)\leq \min\left\{(\tfrac{2\lambda}{\tau})^{\delta}, A k_{0}^{-\delta r}\right\}$. Then,
    \begin{equation}\label{eq:nonsummableconvergence}
        \dist(x_{k} ; \mathcal{X}^*) \leq \dfrac{A}{(k+ k_{0})^{\delta r}}.
    \end{equation}
\end{thm}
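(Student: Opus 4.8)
The plan is to prove \cref{eq:nonsummableconvergence} by induction on $k$, working with the squared distance $D_k:=\dist^2(x_k;\mathcal{X}^*)$ and the target $B_k:=A^2(k+\beta)^{-2\delta r}$. Since $B_k=(A(k+\beta)^{-\delta r})^2$, the claim is exactly $D_k\le B_k$, and the base case $D_0\le B_0$ is immediate from $\dist(x_0;\mathcal{X}^*)\le A\beta^{-\delta r}$. Throughout the induction I must keep $x_k\in\mathcal{T}_\gamma$ so that the basic recurrence \cref{eq-lem-basic02} is available; because $\alpha_k=\lambda(k+\beta)^{-r}$ is decreasing, its largest value $\alpha_0=\lambda\beta^{-r}$ controls everything. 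The second term in the definition of $\beta$ is calibrated precisely for this: it forces $A\beta^{-\delta r}\le(\tfrac{\mu}{2\rho})^{\delta/(\delta(1+\nu)-1)}$, so the whole trajectory bound stays inside $\mathcal{T}_\gamma$, and (together with $A^{1/\delta}\ge 2\lambda/\tau$) keeps $\alpha_0$ below the cap of the last claim of \cref{lembasicrecurrence}, which then propagates $x_k\in\mathcal{T}_\gamma$ to $x_{k+1}\in\mathcal{T}_\gamma$.

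For the inductive step I would feed \cref{eq-lem-basic02} into the scalar map $g(D):=D-\alpha_k\tau\,D^{1/(2\delta)}+\alpha_k^2$, so that $D_{k+1}\le g(D_k)$. The key structural observation is that $g$ is strictly convex on $[0,\infty)$: since $\delta>\tfrac{1}{1+\nu}\ge\tfrac12$, the exponent $\tfrac{1}{2\delta}<1$ makes $-D^{1/(2\delta)}$ convex, so the unique interior critical point of $g$ is a minimum. Hence $g$ attains its maximum over $[0,B_k]$ at an endpoint, and the inductive hypothesis $D_k\in[0,B_k]$ yields $D_{k+1}\le\max\{g(0),g(B_k)\}=\max\{\alpha_k^2,\,g(B_k)\}$. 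It then suffices to establish the two endpoint bounds $\alpha_k^2\le B_{k+1}$ and $g(B_k)\le B_{k+1}$.

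The first bound reads $\lambda^2(k+\beta)^{-2r}\le A^2(k+1+\beta)^{-2\delta r}$; using $A^2\ge 2^{2r\delta}\lambda^2$ (the $\lambda^2$ summand inside $A$), together with $(k+1+\beta)/(k+\beta)\le 2$ and $(k+\beta)^{-2r(1-\delta)}\le 1$ for $\beta\ge 1$, it reduces to $A^2/\lambda^2\ge 2^{2r\delta}$, which holds. For the second, substituting $B_k^{1/(2\delta)}=A^{1/\delta}(k+\beta)^{-r}$ and $\alpha_k=\lambda(k+\beta)^{-r}$ rewrites $g(B_k)\le B_{k+1}$ as $B_k-B_{k+1}\le(\lambda\tau A^{1/\delta}-\lambda^2)(k+\beta)^{-2r}$. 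I would bound the left side by convexity of $t\mapsto t^{-2\delta r}$, giving $B_k-B_{k+1}\le 2\delta r\,A^2(k+\beta)^{-2\delta r-1}$, and the right side from below by $\tfrac12\lambda\tau A^{1/\delta}(k+\beta)^{-2r}$, which uses $A^{1/\delta}\ge 2\lambda/\tau$ (the $(2\lambda/\tau)^{2\delta}$ summand inside $A$, giving $\lambda^2\le\tfrac12\lambda\tau A^{1/\delta}$). The second bound then collapses to $\tfrac{4\delta r\,A^{(2\delta-1)/\delta}}{\lambda\tau}\le(k+\beta)^{\,1-2r(1-\delta)}$.

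The crux of the whole argument is this last inequality, i.e. making the per-step budget $B_k-B_{k+1}\sim(k+\beta)^{-2\delta r-1}$ dominate the available descent $\sim(k+\beta)^{-2r}$ uniformly in $k$. The decisive point is that the exponent $1-2r(1-\delta)$ is strictly positive: from $\delta>\tfrac{1}{1+\nu}\ge\tfrac12$ we get $1-\delta<\tfrac12$, and with $r<1$ this gives $2r(1-\delta)<1$. Consequently $(k+\beta)^{\,1-2r(1-\delta)}$ is increasing in $k$, so it is minimized at $k=0$, and the requirement becomes $\beta^{\,1-2r(1-\delta)}\ge \tfrac{4\delta r\,A^{(2\delta-1)/\delta}}{\lambda\tau}$, which is exactly the first term in the definition of $\beta$. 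This closes the induction. What remains is routine bookkeeping: verifying $\beta\ge 1$ and the step-size cap from $\beta$'s second term (so the tube-containment of \cref{lembasicrecurrence} stays in force), and noting that the bound $\dist(x_0;\mathcal{X}^*)\le(2\lambda/\tau)^\delta$ keeps the first iterate in the regime where the endpoint estimates are tight.
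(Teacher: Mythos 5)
Your proof is correct, and it runs on the same engine as the paper's: induction on $k$ with target $A(k+\beta)^{-\delta r}$, tube membership $x_k\in\mathcal{T}_\gamma$ secured by the second term in $\beta$ so that \cref{eq-lem-basic02} of \cref{lembasicrecurrence} applies, a convexity/endpoint-maximum argument for the one-step scalar map, the convexity of $t\mapsto t^{-2\delta r}$ to bound $B_k-B_{k+1}$, and the first term of $\beta$ to close the resulting requirement $\tfrac{4\delta r A^{(2\delta-1)/\delta}}{\lambda\tau}\le(k+\beta)^{1-2r(1-\delta)}$. Where you genuinely depart from the paper is the organization of the inductive step, and your version is cleaner: the paper introduces the index set $I=\big\{i:\tfrac12\tau\dist^{\tfrac{1}{\delta}}(x_i;\mathcal{X}^*)\le\lambda(i+\beta)^{-r}\big\}$ and argues by cases ($k+1\in I$; $k+1\notin I$ with $k\in I$; $k+1\notin I$ with $k\notin I$), invoking the convex-function/endpoint trick only in the last case, after absorbing $\alpha_k^2$ into half of the descent term. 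You instead keep $\alpha_k^2$ inside the convex map $g(D)=D-\alpha_k\tau D^{1/(2\delta)}+\alpha_k^2$ and verify both endpoint bounds $g(0)=\alpha_k^2\le B_{k+1}$ and $g(B_k)\le B_{k+1}$; the two summands in $A^2$ then serve precisely these two checks ($\lambda^2$ for the left endpoint, and $(2\lambda/\tau)^{2\delta}$ for the right one via $\lambda^2\le\tfrac12\lambda\tau A^{\tfrac{1}{\delta}}$), so the case analysis disappears at no cost in constants. You also make explicit that convexity of $g$ needs $\delta>\tfrac12$, which follows from $\delta>\tfrac{1}{1+\nu}$ in \cref{ass}; the paper uses this silently.

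Three small remarks. First, your aside that the second term of $\beta$ keeps $\alpha_0$ below the step-size cap in the last claim of \cref{lembasicrecurrence} is unnecessary and not obviously true when $\tfrac{\mu}{2\rho}>1$; drop it. Membership $x_{k+1}\in\mathcal{T}_\gamma$ is never needed within step $k$, since at step $k+1$ it follows from the freshly proven $D_{k+1}\le B_{k+1}$ together with $A\beta^{-\delta r}\le\big(\tfrac{\mu}{2\rho}\big)^{\tfrac{\delta}{\delta(1+\nu)-1}}$. Second, the hypothesis $\dist(x_0;\mathcal{X}^*)\le\big(\tfrac{2\lambda}{\tau}\big)^{\delta}$ plays no role in your argument (nor, in fact, in the paper's). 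Third, your left-endpoint check uses $(k+\beta)^{-2r}\le(k+\beta)^{-2\delta r}$ and $k+1+\beta\le 2(k+\beta)$, hence $k+\beta\ge 1$; the paper's case $k\in I$ needs exactly the same, so treating $\beta\ge 1$ as shared bookkeeping is fair.
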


\begin{proof}
    Clearly, $\dist(x_{0};\mathcal{X}^*)\leq A k_{0}^{-\delta r}$, by virtue of the initialization condition.
    Let us proceed by induction. Assume that the inequality \cref{eq:nonsummableconvergence} holds for $k$, $\dist(x_{k};\mathcal{X}^*)\leq A(k+ k_{0})^{-\delta r}$.
    Noticing $ k_{0}\geq A^{\tfrac{1}{\delta r}}\left(\tfrac{2\rho}{\mu}\right)^{\tfrac{1}{r(\delta(1+\nu)-1)}}$, it follows that $x_{k}\in \mathcal{T}_\gamma$.
    From \cref{lembasicrecurrence}, we obtain
    \begin{equation}\label{eq:nonsummable1}
        \dist^2(x_{k+1};\mathcal{X}^*) \leq \dist^2(x_{k};\mathcal{X}^*)- \lambda (k+ k_{0})^{-r} \tau  \dist^{\nicefrac{1}{\delta}}(x_{k};\mathcal{X}^*)  + \lambda^{2} (k+ k_{0})^{-2r}.
    \end{equation}
    Let us consider the index set
    $$
    I:=\Big\{i\in \mathbb{N} :~ {\tfrac{1}{2}}\, \tau \dist^{\nicefrac{1}{\delta}}(x_{i};\mathcal{X}^*) \leq \frac{\lambda}{(i+ k_{0})^{r}}\Big\}.
    $$
    There are two possible cases for the index $k+1$: (i) $k+1\in I$; (ii) $k+1\not\in I$.
    In Case~(i), $k+1\in I$, it follows that: $\dist(x_{k+1};\mathcal{X}^*) \leq \frac{A}{(k+1+ k_{0})^{\delta r}}$,
    since $\left(\tfrac{2\lambda}{\tau}\right)^\delta < A$.
    Alternatively, in Case~(ii), $k+1\not\in I$, let us further consider two subcases: $k$: either $k\in I$ or $k\not\in I$. In the first case, $k\in I$, employing the inequality \cref{eq:nonsummable1}, we come to
    $$
        \dist^2(x_{k+1};\mathcal{X}^*) \leq \dist^2(x_{k};\mathcal{X}^*)  + \frac{\lambda^{2}}{(k+ k_{0})^{2r}}\leq  \frac{\left(\tfrac{2\lambda}{\tau}\right)^{2\delta}}{(k+ k_{0})^{2\delta r}} + \frac{\lambda^{2}}{(k+ k_{0})^{2r}}
        \leq \left(\left(\tfrac{2\lambda}{\tau}\right)^{2\delta}+ \lambda^{2}\right) \frac{1}{(k+ k_{0})^{2\delta r}}
        \leq \frac{A^{2}}{(k+1+ k_{0})^{2\delta r}}.
    $$
    We now verify the second case, $k\not\in I$, i.e.,
    $$
    {\tfrac{1}{2}}\, \tau \dist^{\nicefrac{1}{\delta}}(x_{k};\mathcal{X}^*) > \dfrac{\lambda}{(k+ k_{0})^{r}}.$$
    Substituting this into \cref{eq:nonsummable1} yields
    \begin{equation*}\label{nonsummable3}
         \dist^2(x_{k+1};\mathcal{X}^*) <\dist^2(x_{k};\mathcal{X}^*)- \dfrac{ {\tfrac{1}{2}}\lambda\tau}{(k+ k_{0})^{r}} \dist^{\nicefrac{1}{\delta}}(x_{k};\mathcal{X}^*).
    \end{equation*}
    Let us consider the convex function $\func{\varphi}{\R}{\R}$ given by
    $$\varphi(t) := t - \dfrac{ {\tfrac{1}{2}}\lambda\tau}{(k+ k_{0})^{r}}  t^{\tfrac{1}{2\delta}},$$
    defined over $[0,A^{2}(k+ k_{0})^{-2\delta r}]$ where the function achieves its maximum at $0$ or $A^{2}(k+ k_{0})^{-2\delta r}$.
    Using the inductive assumption and \cref{eq:nonsummable1}, we deduce
    $$
        \dist^2(x_{k+1};\mathcal{X}^*) 
        < \varphi\big(\dist^{2}(x_{k};\mathcal{X}^*)\big)
        \leq \max \left\{\varphi(0), \varphi\Big(A^{2}(k+ k_{0})^{-2\delta r}\Big)\right\}
        =
        \dfrac{A^{2}}{(k+ k_{0})^{2\delta r}}- \dfrac{{\tfrac{1}{2}} \lambda\tau A^{\nicefrac{1}{\delta}}} {(k+ k_{0})^{2r}}.
    $$
    To ensure the result, it is enough to verify 
    \begin{equation}\label{eq:nonsummable4}
        \dfrac{A^{2}}{(k+ k_{0})^{2\delta r}}- \dfrac{ \tfrac{1}{2}\lambda\tau A^{\nicefrac{1}{\delta}}} {(k+ k_{0})^{2r}} \leq \dfrac{A^{2}}{(k+1+ k_{0})^{2\delta r}}.
    \end{equation}
    Using the convexity of the function $\dfrac{A^{2}}{x^{2\delta r}}$ on positive real number set, we get
    $$
    \dfrac{A^{2}}{(k+1+ k_{0})^{2\delta r}} \geq \dfrac{A^{2}}{(k+ k_{0})^{2\delta r}} - \dfrac{2\delta r A^{2}}{(k+ k_{0})^{2\delta r+1}}.
    $$
    Moreover,
    $
    \tfrac{2\delta r A^{2}}{(k+ k_{0})^{2\delta r+1}} \leq
    \tfrac{\tfrac{1}{2}\lambda\tau A^{\nicefrac{1}{\delta}}} {(k+ k_{0})^{2r}}
    $
    if and only if
    $
    \big(\tfrac{4\delta r A^{2-\tfrac{1}{\delta}}}{\lambda \tau}\big)^{\tfrac{1}{1-2r(1-\delta)}} \leq
    k+ k_{0}.
    $
    Thus, \cref{eq:nonsummable4} holds as 
    \[ \big(\tfrac{4r\delta A^{\tfrac{2\delta-1}{\delta}}}{\lambda\tau}\big)^{\tfrac{1}{1-2r(1-\delta)}}\leq  k_{0},\] 
    giving our desired results.
\end{proof}

\Cref{thm:nonsummablestep-size} implies that the subgradient methods projected with decreasing step size $\alpha_{k}=\lambda (k+ k_{0})^{-r}$ when $0<r<1$ have a
convergence rate of \(O(K^{-r\delta})\).
However, for the case where $\tfrac{1}{1+\nu} <\delta < 1$ we can  achieve the convergence rate of
\(O\Big(K^{\tfrac{-\delta}{2(1-\delta)}}\Big)\).
Our next result establishes the sublinear convergence of the PSM with diminishing step-size $\alpha_{k}=\lambda (k+ k_{0})^{-r}$ with $r>1$.

\begin{thm}[{\bf Convergence rate of diminishing method}]\label{thm:decayingstep-size}
    Let $\tfrac{1}{1+\nu}<\delta<1$, $r:=\tfrac{1}{2(1-\delta)}$,
    $A:=(\tfrac{8\delta r}{\tau^{2}})^{\delta r}$, and $\lambda:= {\tfrac{1}{2}} \tau A^{\nicefrac{1}{\delta}}$
    with $\tau=\tfrac{\mu}{L}$, let $\{x_{k}\}_{k\geq 0}$ be generated by \Cref{alg:subgradient} with $\gamma=\nicefrac{1}{2}$ and step-size $\alpha_{k}=\lambda (k+ k_{0})^{-r}$, and let the initial point $x_{0} \in \mathcal{T}_\gamma$ satisfying $\dist(x_{0};\mathcal{X}^*)\leq A  k_{0}^{-\delta r},$
    with $ k_{0} =\max\left\{4\delta r, A^{\tfrac{1}{\delta r}}(\tfrac{2\rho}{\mu})^{\tfrac{2(1-\delta)}{\delta(1+\nu)-1}}\right\}$.
    Then, 
    \begin{equation}\label{eq:decayingconvergence}
        \dist(x_{k} ; \mathcal{X}^*) \leq \dfrac{A}{(k+ k_{0})^{\delta r}}.
    \end{equation}
\end{thm}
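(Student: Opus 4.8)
The plan is to prove $\dist(x_k;\mathcal{X}^*)\le A(k+\beta)^{-\delta r}$ by induction on $k$, in the spirit of \Cref{thm:nonsummablestep-size} but exploiting the sharp tuning of $A$, $\lambda$, $r$ so that a single convex comparison function handles the step (no index-set splitting is needed here). The base case $k=0$ is exactly the initialization hypothesis $\dist(x_0;\mathcal{X}^*)\le A\beta^{-\delta r}$. Before starting, I would record the algebraic identities forced by the definitions: since $r=\tfrac{1}{2(1-\delta)}$ one has $2r(1-\delta)=1$, hence $2r=2\delta r+1$; since $\lambda=\tfrac{1}{2}\tau A^{1/\delta}$ one has $\tfrac{2\lambda}{\tau}=A^{1/\delta}$ and $\lambda^2=\tfrac14\tau^2 A^{2/\delta}$; and since $A=(\tfrac{8\delta r}{\tau^2})^{\delta r}$ one has $A^{2/\delta-2}=\tfrac{8\delta r}{\tau^2}$, which combine to the crucial identity $\lambda^2=2\delta r A^2$. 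I would also note $\delta>\tfrac{1}{\nu+1}\ge\tfrac12$, so $\tfrac{1}{2\delta}<1$ and $2\delta r=\tfrac{\delta}{1-\delta}>1$.

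For the inductive step, assume $\dist(x_k;\mathcal{X}^*)\le A(k+\beta)^{-\delta r}$. First I would confirm $x_k\in\mathcal{T}_\gamma$: since $\beta\ge A^{1/(\delta r)}(\tfrac{2\rho}{\mu})^{\frac{2(1-\delta)}{\delta(1+\nu)-1}}$ and $2(1-\delta)\delta r=\delta$, raising to the power $\delta r$ gives $A\beta^{-\delta r}\le(\tfrac{\mu}{2\rho})^{\frac{\delta}{\delta(1+\nu)-1}}$, so the iterate lies in $\mathcal{T}_\gamma$ and \Cref{lembasicrecurrence} applies. Writing $D_k:=\dist^2(x_k;\mathcal{X}^*)$ and using \cref{eq-lem-basic02} with $\alpha_k=\lambda(k+\beta)^{-r}$ yields $D_{k+1}\le\varphi(D_k)$, where $\varphi(t):=t-\lambda\tau(k+\beta)^{-r}t^{1/(2\delta)}+\lambda^2(k+\beta)^{-2r}$. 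Because $\tfrac{1}{2\delta}<1$, the map $t\mapsto t^{1/(2\delta)}$ is concave, so $\varphi$ is convex; hence its maximum over $[0,M]$ with $M:=A^2(k+\beta)^{-2\delta r}$ is attained at an endpoint, and the inductive hypothesis $D_k\in[0,M]$ gives $D_{k+1}\le\max\{\varphi(0),\varphi(M)\}$.

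It then remains to bound both endpoints by $A^2(k+1+\beta)^{-2\delta r}$. For the right endpoint, substituting $M^{1/(2\delta)}=A^{1/\delta}(k+\beta)^{-r}$ and using $\lambda^2=2\delta r A^2$ together with $2r=2\delta r+1$ collapses $\varphi(M)$ exactly to $A^2(k+\beta)^{-2\delta r}-2\delta r A^2(k+\beta)^{-(2\delta r+1)}$, which is precisely the tangent line of the convex function $g(x):=A^2x^{-2\delta r}$ at $x=k+\beta$, evaluated at $x=k+1+\beta$; convexity of $g$ then gives $\varphi(M)\le g(k+1+\beta)$ with no further condition. For the left endpoint, $\varphi(0)=2\delta r A^2(k+\beta)^{-2r}$, and with $2r=2\delta r+1$ the target inequality reduces to $\tfrac{2\delta r}{k+\beta}\le\big(\tfrac{k+\beta}{k+1+\beta}\big)^{2\delta r}$; by Bernoulli's inequality (valid since $2\delta r>1$) the right side is at least $1-\tfrac{2\delta r}{k+1+\beta}$, so it suffices that $\tfrac{2\delta r}{k+\beta}+\tfrac{2\delta r}{k+1+\beta}\le1$, which follows from $\beta\ge4\delta r$. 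Combining the two endpoint bounds closes the induction.

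The main obstacle I anticipate is the left-endpoint estimate: unlike \Cref{thm:nonsummablestep-size}, the sharp choice of $A$ leaves no slack, so controlling $\varphi(0)$ is what forces the extra requirement $\beta\ge4\delta r$ and a careful Bernoulli-type argument. The second delicate point is recognizing that the prescribed constants make $\varphi(M)$ coincide \emph{exactly} with the tangent line of $x^{-2\delta r}$, so that the precise algebra of $A$, $\lambda$, $r$, rather than crude estimates, drives the proof.
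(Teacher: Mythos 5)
Your proof is correct and follows essentially the same route as the paper's: the same induction with the same tube-membership check, the same convex comparison function built from \cref{eq-lem-basic02} with an endpoint analysis on $[0,A^2(k+\beta)^{-2\delta r}]$, the same key identities $\lambda^2=2\delta r A^2$ and $2r=2\delta r+1$, and the same tangent-line (convexity) bound for the right endpoint. The only cosmetic difference is the left endpoint: you bound $\varphi(0)$ via Bernoulli's inequality applied at $k+\beta$, while the paper first reduces to $k=0$ using monotonicity of $x^{2r}/(1+x)^{2\delta r}$ and then applies the same tangent-line estimate at $\beta$; both arguments rest on $\beta\ge 4\delta r$ in exactly the same way.
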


\begin{proof}
    By induction, we show the inequality \cref{eq:decayingconvergence}. The result for $k=0$ holds.
    We now assume that the inequality \cref{eq:decayingconvergence} holds for iteration $k$. We will show that it remains true for $k+1$.
    First, we observe that 
    $$
    \dist(x_{k} ; \mathcal{X}^*) \leq \frac{A}{(k+ k_{0})^{\delta r}} \leq \frac{A}{ k_{0}^{\delta r}} \leq \frac{A}{A(\tfrac{2\rho}{\mu})^{\tfrac{2(1-\delta)\delta r}{\delta(1+\nu)-1}}} = (\tfrac{\mu}{2\rho})^{\tfrac{\delta}{\delta(1+\nu)-1}},
    $$
    i.e., $x_{k}\in \mathcal{T}_\gamma$. Thanks to \cref{lembasicrecurrence}, it holds that
    \begin{equation}\label{eq:decaying3}
        \dist^2(x_{k+1};\mathcal{X}^*) \leq \dist^2(x_{k};\mathcal{X}^*)- \lambda (k+ k_{0})^{-r} \tau  \dist^{\nicefrac{1}{\delta}}(x_{k};\mathcal{X}^*)  + \lambda^{2} (k+ k_{0})^{-2r}.
    \end{equation}
    We consider the convex function $\varphi(t) := t - \lambda (k+ k_{0})^{-r} \tau  t^{\tfrac{1}{2\delta}}   + \lambda^{2} (k+ k_{0})^{-2r}$
    on $[0,A^{2}(k+ k_{0})^{-2\delta r}]$, leading to
    $$
    \begin{aligned}
        \dist^2(x_{k+1};\mathcal{X}^*) &\leq \varphi(\dist^{2}(x_{k};\mathcal{X}^*))\leq \max \left\{\varphi(0), \varphi(A^{2}(k+ k_{0})^{-2\delta r})\right\}\\[2mm]
        &=\max \left\{ \lambda^{2} (k+ k_{0})^{-2r},~
        A^{2}(k+ k_{0})^{-2\delta r}- \lambda \tau  A^{\nicefrac{1}{\delta}}(k+ k_{0})^{-2r}  + \lambda^{2} (k+ k_{0})^{-2r}\right\},
    \end{aligned}
    $$
    where \cref{eq:decaying3} was applied in the first inequality.
    To establish the desired result, it suffices to show
    \begin{align}
        &\hspace{1cm}\dfrac{\lambda^{2}}{(k+ k_{0})^{2r}}  \leq \dfrac{A^{2}}{(k+1+ k_{0})^{2\delta r}},\label{eq:decaying4}\\
        &\dfrac{A^{2}}{(k+ k_{0})^{2\delta r}}+ \dfrac{\lambda^{2} - \lambda \tau  A^{\nicefrac{1}{\delta}}}{(k+ k_{0})^{2r}} \leq \dfrac{A^{2}}{(k+1+ k_{0})^{2\delta r}}.\label{eq:decaying5}
    \end{align}
    Let us begin with verifying \cref{eq:decaying4}.
    Inasmuch as the function $\frac{x^{2r}}{(1+x)^{2\delta r}}$ is increasing on $[0,\infty)$,
    \begin{equation}\label{eq:decaying7}
        \dfrac{ k_{0}^{2r}}{(1+ k_{0})^{2\delta r}} \leq \dfrac{(k+ k_{0})^{2r}}{(k+1+ k_{0})^{2\delta r}}.
    \end{equation}
    Using the convexity of the function $\frac{1}{x^{2\delta r}}$ on positive real numbers, we get 
    \begin{equation}\label{eq:decaying8}
        \dfrac{1}{(1+ k_{0})^{2\delta r}} \geq \dfrac{1}{ k_{0}^{2\delta r}} - \dfrac{2\delta r}{ k_{0}^{2\delta r+1}}=\dfrac{1}{ k_{0}^{2\delta r}} - \dfrac{2\delta r}{ k_{0}^{2r}},
    \end{equation}
    with $2\delta r+1=2r$. Combining \cref{eq:decaying7} with \cref{eq:decaying8}, it follows that
    $$
    \frac{(k+ k_{0})^{2r}}{(k+1+ k_{0})^{2\delta r}} \geq \frac{ k_{0}^{2r}}{(1+ k_{0})^{2\delta r}} \geq
    \frac{ k_{0}^{2r}}{ k_{0}^{2\delta r}} - \frac{2\delta r k_{0}^{2r}}{ k_{0}^{2r}}
    = k_{0} - 2\delta r \geq 4\delta r - 2\delta r = 2\delta r = \frac{\lambda^{2}}{A^{2}},
    $$
    ensuring \cref{eq:decaying4}.
    Next, we justify the inequality \cref{eq:decaying5}.
    From the convexity of $\dfrac{A^{2}}{x^{2\delta r}}$, we obtain
    $$
    \frac{A^{2}}{(k+1+ k_{0})^{2\delta r}} - \frac{A^{2}}{(k+ k_{0})^{2\delta r}} \geq \frac{-2\delta rA^{2}}{(k+ k_{0})^{2\delta r+1}} = \frac{-2\delta rA^{2}}{(k+ k_{0})^{2r}}.
    $$
    By combining this inequality with $\lambda^{2} - \lambda\tau A^{\nicefrac{1}{\delta}}=- 2\delta r A^{2}$, which follows from the definition of $A$, $\lambda$, and $r$, the inequality \cref{eq:decaying5} is verified.
\end{proof}

We next focus on a Geometrically Decaying (GD) step-size to ensure linear convergence to the optimal solution set.
In this case, the step-size forms a decreasing geometric sequence that tends to zero, i.e., $\alpha_{k}=\lambda q^{k}$ where $\lambda >0$ and $0<q<1$.
\cref{thm:geodecayingstep-size} establishes the linear convergence of the PSM with GD step-size.

\begin{thm}[{\bf Convergence rate of GD method}]\label{thm:geodecayingstep-size}
    Let $\delta= 1$, $\tau=\tfrac{\mu}{L}\neq 1$ and
    $$
    \max\Big\{\tfrac{5\tau^{2}-4}{2\tau^{2}},0\Big\} < \beta < {\tfrac{1}{2}},\quad 0<\lambda< \tfrac{\tau}{2} \left(\tfrac{\mu}{2\rho}\right)^{\nicefrac{1}{\nu}},
    \quad 
    q:=\sqrt{1-\tfrac{(1-2\beta) \tau^{2}}{4}}, \quad
    A:=\max\left\{\tfrac{2\lambda}{\tau},\dist(x_0;\mathcal{X}^*)\right\}.
    $$
    Let $\{x_{k}\}_{k\geq 0}$ be generated by \Cref{alg:subgradient} with $\gamma=\nicefrac{1}{2}$ and step-size $\alpha_{k}=\lambda q^{k}$ and let the initial point $x_{0} \in \mathcal{T}_\gamma$ satisfying
    \begin{equation}\label{eq:geodecaying1}
        \dist(x_0;\mathcal{X}^*)\leq\tfrac{2\lambda}{\tau -\sqrt{\tau^{2}-4(1-q^{2})}}.
    \end{equation}
    Then, 
    \begin{equation}\label{eq:geodecayingconvergence}
        \dist(x_{k} ; \mathcal{X}^*) \leq A q^{k}.
    \end{equation}
\end{thm}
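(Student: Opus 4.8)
The plan is to prove the bound \eqref{eq:geodecayingconvergence} by induction on $k$, writing $d_k := \dist(x_k;\mathcal{X}^*)$ throughout and exploiting that $\delta = 1$ collapses the basic recurrence \eqref{eq-lem-basic02} of \Cref{lembasicrecurrence} into the clean scalar form
$$
d_{k+1}^2 \le d_k^2 - \lambda q^k \tau\, d_k + \lambda^2 q^{2k},
$$
obtained by substituting $\alpha_k = \lambda q^k$ and using $\dist^{1/\delta} = \dist$. Before invoking this recurrence at step $k$ one must certify $x_k \in \mathcal{T}_{1/2}$: the inductive hypothesis $d_k \le A q^k \le A$ together with $2\lambda/\tau < (\mu/2\rho)^{1/\nu}$ (from the upper bound on $\lambda$) and $d_0 < (\mu/2\rho)^{1/\nu}$ (from $x_0\in\mathcal{T}_{1/2}$) yields $A < (\mu/2\rho)^{1/\nu}$, hence $d_k < (\mu/2\rho)^{1/\nu}$, which is exactly membership in $\mathcal{T}_{1/2}$.

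The base case $k=0$ is immediate from $A = \max\{2\lambda/\tau, d_0\} \ge d_0$. For the inductive step I would read the right-hand side of the recurrence as a convex (upward) quadratic $\psi(t) := t^2 - \lambda q^k\tau\, t + \lambda^2 q^{2k}$ in $t = d_k$ on the interval $[0, A q^k]$, so that its maximum is attained at an endpoint, giving $d_{k+1}^2 \le \max\{\psi(0), \psi(A q^k)\}$. It then suffices to show both endpoint values are at most the target $A^2 q^{2(k+1)}$; after cancelling $q^{2k}$ this reduces to the two scalar conditions
\begin{equation*}
    \lambda^2 \le A^2 q^2
    \qquad\text{and}\qquad
    A^2 - \lambda\tau A + \lambda^2 \le A^2 q^2 .
\end{equation*}

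The heart of the argument is verifying these two inequalities from the prescribed constants, which is where the algebraic choices of $q$, $\lambda$, $A$, and the initialization radius enter. Using $q^2 = 1 - (1-2\gamma)\tau^2/4$, hence $1 - q^2 = (1-2\gamma)\tau^2/4$ and $\tau^2 - 4(1-q^2) = 2\gamma\tau^2$, I would rewrite the second condition as $(1-q^2)A^2 - \lambda\tau A + \lambda^2 \le 0$ in $A$, whose roots compute (factoring $1-2\gamma = (1-\sqrt{2\gamma})(1+\sqrt{2\gamma})$) to the clean values $A_- = \tfrac{2\lambda}{(1+\sqrt{2\gamma})\tau}$ and $A_+ = \tfrac{2\lambda}{(1-\sqrt{2\gamma})\tau}$. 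The admissible range is $A \in [A_-, A_+]$, and the key observation is that the initialization bound \eqref{eq:geodecaying1} is precisely $d_0 \le A_+$, while $A_- \le 2\lambda/\tau \le A_+$ holds trivially since $0 < \sqrt{2\gamma} < 1$; together these force $A = \max\{2\lambda/\tau, d_0\} \in [A_-, A_+]$, securing the second condition. For the first condition $\lambda \le A q$, I would use $A \ge 2\lambda/\tau$ to reduce it to $q \ge \tau/2$, which follows from the lower bound on $\gamma$ (equivalent to $q > \tau$, since $q^2 - \tau^2$ has the same sign as $\gamma - (5\tau^2-4)/(2\tau^2)$, and $q>\tau$ in particular gives $q>\tau/2$).

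Finally, $\gamma < 1/2$ together with $\tau \in (0,1)$ (using $\tau\ne 1$ and \Cref{lem:tau}) guarantees $0 < q < 1$ and positivity of the denominator $\tau(1-\sqrt{2\gamma})$ in \eqref{eq:geodecaying1}, so all displayed quantities are well defined. Since $d_{k+1} \le A q^{k+1} < (\mu/2\rho)^{1/\nu}$, the iterate $x_{k+1}$ automatically re-enters $\mathcal{T}_{1/2}$ and the induction closes. The main obstacle is the bookkeeping in the third paragraph: correctly computing the roots $A_\pm$, recognizing the initialization radius as exactly $A_+$, and tracking how each stated constraint on $\gamma$, $\lambda$, and $d_0$ maps onto precisely one of the two endpoint conditions.
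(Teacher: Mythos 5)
Your proposal is correct and follows essentially the same route as the paper's proof: induction with tube membership certified via $A<(\mu/2\rho)^{1/\nu}$, the basic recurrence of \Cref{lembasicrecurrence} with $\delta=1$, endpoint maximization of the convex quadratic on $[0,Aq^k]$, and verification of the two resulting scalar conditions by locating $A$ between the roots of $(1-q^2)Z^2-\lambda\tau Z+\lambda^2=0$, which are exactly the paper's $z_1,z_2$ (your simplification $\sqrt{\tau^2-4(1-q^2)}=\tau\sqrt{2\gamma}$ just makes these roots more explicit).
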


\begin{proof}
    By \cref{lem:tau}, it follows that $\tau\in (0,1)$.
    Consequently,
    $\tfrac{5\tau^{2}-4}{2\tau^{2}}< {\tfrac{1}{2}}$, which ensures the existence of $\beta$ within the specified range. Additionally, it is evident that $0 < q < 1$. Observing that \(1-q^{2}<\tfrac{ \tau^{2}}{4}\), we deduce $\tau^{2}-4(1-q^{2}) >0$, i.e., the inequality \cref{eq:geodecaying1} is well-defined.
    We proceed by induction to verify the inequality~\cref{eq:geodecayingconvergence}.
    For $k=0$, we note that $\dist(x_{0}; \mathcal{X}^*) \leq A= Aq^{0}$. 
    Assume that \cref{eq:geodecayingconvergence} holds for $i=0,1,\ldots,k$, and we show that it holds for $k+1$.
    Using the upper bound of $\lambda$ and the assumption $x_0\in \mathcal{T}_\gamma$, we get
    $$
    \dist(x_{k} ; \mathcal{X}^*) \leq A q^{k}\leq A =\max\left\{\tfrac{2\lambda}{\tau},\dist(x_0;\mathcal{X}^*)\right\}\leq \left(\tfrac{\mu}{2\rho}\right)^{\nicefrac{1}{\nu}}.
    $$
    Hence, $x_{k}\in \mathcal{T}_\gamma$, i.e., applying \cref{lembasicrecurrence} to the case $\delta=1$ yields
    $$
    \dist^2(x_{k+1};\mathcal{X}^*) \leq \dist^2(x_{k};\mathcal{X}^*)- \lambda q^{k} \tau  \dist(x_{k};\mathcal{X}^*)  + \lambda^{2} q^{2k}.
    $$
    Let us define the convex function $\varphi(t) := t^{2} - \lambda q^{k} \tau  t   + \lambda^{2} q^{2k}$ on $[0,Aq^{k}]$, which attains its maximum at $0$ or $A q^{k}$.
    By the inductive assumption, $\dist(x_{k};\mathcal{X}^*) \in [0,A q^{k}]$, i.e., 
    $$
        \dist^2(x_{k+1};\mathcal{X}^*) \leq \varphi \big(\dist(x_{k};\mathcal{X}^*)\big) \leq \max \left\{\varphi(0), \varphi(A q^{k})\right\}
         = q^{2k} \max \left\{ \lambda^{2} ,~
        A^{2} - \lambda \tau A   + \lambda^{2}\right\}.
    $$
    To ensure $\dist^2(x_{k+1};\mathcal{X}^*)\leq A^{2} q^{2(k+1)}$, it is sufficient to verify
    \begin{equation}\label{eq:geodecaying3}
        \lambda^{2}\leq A^{2}q^{2}, \quad
        A^{2}-  \lambda \tau A   + \lambda^{2} \leq A^{2}q^{2}.
    \end{equation}
    For the first inequality, since $A^2\geq \tfrac{4\lambda^2}{\tau^2}$ and  $\tau^2 \leq q^2$, it follows that $ \lambda^{2}\leq A^{2}q^{2}$, where $\tau^{2} \leq q^{2}$ arises from the lower bound of $\beta$.
    To show the second inequality in \cref{eq:geodecaying3}, we consider the quadratic equation
    \begin{equation}\label{eq:geodecayingquadratic}
        \left(1  -q^{2}\right)Z^{2} - \lambda \tau Z  + \lambda^{2} =0,
    \end{equation}
    where its positive roots, $z_{1} \leq z_{2}$ are given by
    $$z_{1,2}:=\tfrac{\lambda\tau \pm \lambda\sqrt{ \tau^{2} -  4(1-q^{2})}}{2(1  -q^{2})}=
    \tfrac{2\lambda}{\tau \mp \sqrt{\tau^{2}-4(1-q^{2})}}.$$
    To satisfy the second inequality in \cref{eq:geodecaying3}, it is sufficient to verify that $z_{1}\leq A\leq z_{2}$.
    Regarding the smaller root, it is clear that
    $z_{1} \leq \tfrac{2\lambda}{\tau}\leq A$.
    For the larger root, we consider two cases: (i) $A=\dist(x_{0};\mathcal{X}^*)$; (ii) $A=\tfrac{2\lambda}{\tau}$. In Case~(i), $A=\dist(x_{0};\mathcal{X}^*)$, the upper bound in \cref{eq:geodecaying1} and $0<q<1$ ensures the result. In Case~(ii),  $A=\tfrac{2\lambda}{\tau}$, it is clear that
    $A\leq \tfrac{2\lambda}{\tau - \sqrt{\tau^{2}-4(1-q^{2})}}=z_{2}.$
    Therefore, the inductive step is complete, adjusting our desired result.
\end{proof}

\subsection{{\bf PSM with Scaled Polyak's step-size}}\label{sec:Polyak subgradient method2}

In this section, we concentrate on the PSM with the Scaled Polyak's step-size, $\alpha_k = \tfrac{f(x_k)-f^*}{\sigma\Vert\zeta_k\Vert}$ where $\sigma>0$.
Unlike the PSMs discussed in the previous sections, this approach requires knowing the optimal value of the problem \cref{prb2}. This requirement is reasonable in certain scenarios, such as when employing the exact penalty method to solve nonlinear equations. We conclude this section by analyzing the convergence of the sequence generated by \Cref{alg:subgradient} with a scaled version of Polyak's step-size. We will show that the sequence generated by this method attains a sublinear convergence rate if $\tfrac{1}{1+\nu}<\delta< 1$, and enjoys the $Q$-linear convergence if $\delta= 1$.

\begin{thm}[{\bf Convergence rate of Scaled Polyak's method}]\label{thm:polyak}
Fix the parameters $\sigma>\tfrac{1}{2}$ and $0<\gamma<{\tfrac{2\sigma-1}{2\sigma}}$. Let $\{x_{k}\}_{k\geq 0}$ be generated by \Cref{alg:subgradient} with Scaled Polyak's step-size.
Then, the following assertions hold:
\begin{enumerate}[label=(\alph*)]
    \item\label{thm:polyak:p1}
        For each integer $k\ge 0$, $x_k\in\mathcal{T}_\gamma$ and 
        \begin{equation}\label{eq:polyak1}
            \dist^{2}(x_{k+1};\mathcal{X}^*) \leq \bigg(1 -\Big(2\sigma(1-\gamma)-1\Big)\sigma^{-2}\tau_{k}^{2} \bigg) \dist^{2}(x_{k};\mathcal{X}^*),
        \end{equation}
        where $\tau_{k}:=\tau\dist^{\tfrac{1}{\delta}-1}(x_{k};\mathcal{X}^*)$ and $\tau = \tfrac{\mu}{L}$;
    
    \item\label{thm:polyak:p2}
        The sequence $\{\dist(x_k; \mathcal{X}^*)\}_{k\ge 0}$ either converges to zero in a finite number of steps or converges to zero monotonically and
        $\lim_{k\to\infty} f(x_k) =f^{*}$;
    
    \item\label{thm:polyak:p3}
        If $\delta= 1$, the sequence $\{\dist(x_k; \mathcal{X}^*)\}_{k\ge 0}$ Q-linearly converges with rate $\sqrt{1-\Big(2\sigma(1-\gamma)-1\Big)\sigma^{-2}\tau^2}$;
        
    \item\label{thm:polyak:p4}
        If $\tfrac{1}{1+\nu}<\delta<1$, the sequence $\{\dist(x_k; \mathcal{X}^*)\}_{k\ge 0}$ sublinearly converges, i.e., there exists $\eta>0$ such that
        $$\dist^{2}(x_k;\mathcal{X}^*)\leq \eta k^{-\tfrac{\delta}{1-\delta}},$$
        for all $k\in \mathbb{N}$ sufficiently large;
        
    \item\label{thm:polyak:p5}
        For each $k\in \mathbb{N}$,
        \begin{equation}\label{eq:polyak:pv}
            f^{*}_{k}-f^{*} \leq \frac{\sigma L\dist(x_{0};\mathcal{X}^*)}{\sqrt{2\sigma(1-\gamma)-1}\sqrt{k+1}}.
        \end{equation}
\end{enumerate}
\end{thm}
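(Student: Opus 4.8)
The plan is to establish part~(a) first, since it carries all the analytic content: it produces the one-step contraction and, by induction, the invariance $x_k\in\mathcal{T}_\gamma$; parts (b)--(e) then follow mechanically from this single inequality. For (a), I would fix $x^*\in\operatorname{proj}_{\mathcal{X}^*}(x_k)$ and expand exactly as in \cref{lembasicrecurrence}, using nonexpansiveness of $\operatorname{proj}_X$,
\[
    \dist^2(x_{k+1};\mathcal{X}^*)\leq \dist^2(x_k;\mathcal{X}^*)+\tfrac{2\alpha_k}{\|\zeta_k\|}\langle\zeta_k,x^*-x_k\rangle+\alpha_k^2,
\]
and then bound $\langle\zeta_k,x^*-x_k\rangle\leq f^*-f(x_k)+\rho\dist^{1+\nu}(x_k;\mathcal{X}^*)$ via the subgradient inequality of \cref{pro:characterization}(a). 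The decisive step is that $x_k\in\mathcal{T}_\gamma$ together with the tube definition \eqref{eq:Tgamma} and HEB yields $\rho\dist^{1+\nu}(x_k;\mathcal{X}^*)<\gamma\,(f(x_k)-f^*)$, so the inner product is dominated by $-(1-\gamma)(f(x_k)-f^*)$. Substituting the Scaled Polyak step-size $\alpha_k=(f(x_k)-f^*)/(\sigma\|\zeta_k\|)$ collapses the two error contributions into the single factor $(-2(1-\gamma)+\sigma^{-1})(f(x_k)-f^*)^2/(\sigma\|\zeta_k\|^2)$, giving
\[
    \dist^2(x_{k+1};\mathcal{X}^*)\leq \dist^2(x_k;\mathcal{X}^*)-\tfrac{2\sigma(1-\gamma)-1}{\sigma^2}\,\tfrac{(f(x_k)-f^*)^2}{\|\zeta_k\|^2}.
\]
Finally I would apply HEB ($f(x_k)-f^*\geq\mu\dist^{1/\delta}$) and $\|\zeta_k\|\leq L$ (legitimate since $x_k\in\mathcal{T}_1$) to replace $(f(x_k)-f^*)^2/\|\zeta_k\|^2$ by $\tau_k^2\dist^2(x_k;\mathcal{X}^*)$, producing the stated contraction.

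The invariance $x_k\in\mathcal{T}_\gamma$ then follows by induction on the same inequality. The bound $\gamma<(2\sigma-1)/(2\sigma)$ forces $2\sigma(1-\gamma)-1>0$, so the contraction factor is strictly below $1$; and $\tau_k\leq1$ from \cref{lem:tau} together with the elementary identity $(\sigma-1)^2+2\sigma\gamma\geq0$ (equivalently $(2\sigma(1-\gamma)-1)/\sigma^2\leq1$) forces the factor to stay in $[0,1)$. Hence $\dist(x_{k+1};\mathcal{X}^*)\leq\dist(x_k;\mathcal{X}^*)$, so $x_{k+1}$ cannot leave the tube, closing the induction started from $x_0\in\mathcal{T}_\gamma$.

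Parts (b)--(e) are short consequences. Writing $c:=(2\sigma(1-\gamma)-1)/\sigma^2>0$, the factor makes $\dist(x_k;\mathcal{X}^*)$ monotonically nonincreasing (with finite termination if some $\zeta_k=0$); telescoping (a) gives $\sum_k c\,\tau_k^2\dist^2(x_k;\mathcal{X}^*)<\infty$, and since $\tau_k^2\dist^2(x_k;\mathcal{X}^*)=\tau^2\dist^{2/\delta}(x_k;\mathcal{X}^*)$ a positive limit $d$ would keep these terms bounded below by $\tau^2 d^{2/\delta}>0$, a contradiction, whence $d=0$; continuity of $f$ from \cref{prop:paralip} then yields $f(x_k)\to f^*$, giving (b). For (c), $\delta=1$ makes $\tau_k\equiv\tau\in(0,1)$ by \cref{lem:tau}, so (a) is a fixed-ratio contraction, i.e.\ Q-linear convergence with the stated rate. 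For (d), setting $D_k:=\dist^2(x_k;\mathcal{X}^*)$ and using $\tau_k^2=\tau^2 D_k^{1/\delta-1}$ recasts (a) as $D_k^{1/\delta}\leq\tfrac{1}{c\tau^2}(D_k-D_{k+1})$; since $\delta<1$ gives exponent $1/\delta>1$ and $D_k\to0$ by (b), \cref{lem-sequence}(3) delivers $D_k\leq\eta k^{-\delta/(1-\delta)}$. For (e), I would keep $\|\zeta_k\|\leq L$ in the descent inequality before converting to $\tau_k$, sum from $0$ to $k$, telescope, and bound each $(f(x_i)-f^*)^2$ below by $(f_k^*-f^*)^2$; rearranging $(k+1)(f_k^*-f^*)^2\leq\sigma^2L^2\dist^2(x_0;\mathcal{X}^*)/(2\sigma(1-\gamma)-1)$ and taking square roots gives the claim.

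I expect the main obstacle to be the bookkeeping in part~(a): deploying the tube inequality $\rho\dist^{1+\nu}<\gamma(f(x_k)-f^*)$ at exactly the right moment so that the Polyak substitution produces precisely the coefficient $2\sigma(1-\gamma)-1$, and then verifying both sign conditions that make the induction close—positivity from the upper bound on $\gamma$, and the upper bound on the contraction factor from $\tau_k\leq1$. Once (a) is secured, everything downstream reduces to telescoping sums and a single application of \cref{lem-sequence}.
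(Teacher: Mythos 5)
Your proposal is correct and follows essentially the same route as the paper's own proof: the same one-step expansion via nonexpansiveness and \cref{pro:characterization}, the same use of the tube inequality $\rho\dist^{1+\nu}(x_k;\mathcal{X}^*)<\gamma\,(f(x_k)-f^*)$ to produce the coefficient $2\sigma(1-\gamma)-1$, the same induction for tube invariance (with the positivity and $\tau_k\le 1$ checks), and the same telescoping/contradiction arguments for (b)--(e). If anything, your part (d) is stated more cleanly than the paper's --- working with $D_k=\dist^2(x_k;\mathcal{X}^*)$ so that \cref{lem-sequence} applies verbatim with exponent $1/\delta$, whereas the paper's rewriting slips between $\dist$ and $\dist^2$ --- and the only detail you gloss over that the paper treats explicitly is the degenerate cases $\zeta_k=0$ or $x_k\in\mathcal{X}^*$, where the scaled Polyak step is zero or undefined and the induction closes trivially.
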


\begin{proof}
\ref{thm:polyak:p1} By assumption it holds that $x_{0}\in \mathcal{T}_\gamma$. 
Using induction, assume that $x_{k}\in \mathcal{T}_\gamma$ for some $k\ge 0$. We prove that $x_{k+1}\in \mathcal{T}_\gamma$ for $k=k+1$.
Two cases are recognized: (i) $\zeta_{k}=0$; (ii) $\zeta_{k}\neq 0$.
In Case~(i), $\zeta_k=0$, it clearly $x_{k+1}=x_{k}\in \mathcal{T}_\gamma$.
Moreover, $0=\zeta_k\in \partial f(x_{k})$ and $x_{k}$ is a stationary point for problem \cref{prb2}.
Hence, $x_{k} \in \mathcal{X}^*$ by \cref{lem-dis2}.
Thus, $\dist(x_{k+1};\mathcal{X}^*)=\dist(x_{k};\mathcal{X}^*)=0$, and
\cref{eq:polyak1} is satisfied.
In Case~(ii), $\zeta_{k}\neq 0$, let us further consider two subcases: either $x_{k}\in \mathcal{X}^*$ or $x_{k}\not\in \mathcal{X}^*$.
In the case $x_{k}\in \mathcal{X}^*$, it follows that $f(x_{k})=f^*$, $\alpha_k=0$, $\dist(x_{k};\mathcal{X}^*)=0$, and $\tau_{k}=0$. Then, we obtain $x_{k+1}=\operatorname{proj}_{X}{(x_{k})}=x_{k}\in \mathcal{T}_\gamma$, and also \cref{eq:polyak1} holds.

Now, let us examine the latter case $x_{k}\not\in \mathcal{X}^*$.
We choose $\bar x\in \operatorname{proj}_{\mathcal{X}^*}{(x_{k})}$ implying $\Vert x_{k} -\bar x\Vert = \dist(x_{k};\mathcal{X}^*)$ and $f(\bar{x})=f^{*}$.
Drawing on the nonexpansiveness of the projection operator and  \cref{thm:f-chara}, we have
\begin{align}
        \dist^2(x_{k+1};\mathcal{X}^*) & \leq \Vert x_{k+1} -\bar x\Vert^2  \leq \dist^2(x_{k};\mathcal{X}^*) + \tfrac{2\big(f(x_{k})-f(\bar x)\big)}{\sigma\Vert\zeta_{k}\Vert^2} \langle \zeta_{k}, \bar x -x_{k}\rangle + \tfrac{\big(f(x_{k})-f(\bar x)\big)^2}{\sigma^{2}\Vert\zeta_{k}\Vert^2}\nonumber\vspace{2mm}\\
         & \leq \dist^2(x_{k};\mathcal{X}^*) + \tfrac{2\big(f(x_{k})-f(\bar x)\big)}{\sigma\Vert\zeta_{k}\Vert^2} \Big(f(\bar x)-f(x_{k}) +\rho\Vert x_{k} -\bar x\Vert^{1+\nu}\Big)+ \tfrac{\big(f(x_{k})-f(\bar x)\big)^2}{\sigma^{2}\Vert\zeta_{k}\Vert^{2}}\nonumber\vspace{2mm}\\
         & = \dist^2(x_{k};\mathcal{X}^*) +\tfrac{f(x_{k})-f^{*}}{\sigma^{2}\Vert\zeta_{k}\Vert^2}
         \Big(2\sigma\rho\dist^{1+\nu}(x_{k};\mathcal{X}^*)-(2\sigma-1)(f(x_{k}) - f^{*})\Big).\label{eq:polyak2}
\end{align}
Since $x_{k}\in \mathcal{T}_\gamma\subseteq \mathcal{T}_{1}$, we get $\dist(x_{k};\mathcal{X}^*) < \left(\gamma\tfrac{\mu}{\rho}\right)^{\tfrac{\delta}{\delta(1+\nu)-1}}$ and $\Vert \zeta_{k}\Vert\leq L$.
Therefore,
\begin{align}\label{eq:polyak3}
    \rho\dist^{1+\nu}(x_{k};\mathcal{X}^*) < \gamma\mu\dist^{\tfrac{1}{\delta}}(x_{k};\mathcal{X}^*)\leq \gamma (f(x_{k}) - f^{*}),
\end{align}
where the second inequality follows from  the HEB. Applying these inequalities in \cref{eq:polyak2}, we deduce
\begin{equation*}
    \begin{array}{ll}
       \dist^2(x_{k+1};\mathcal{X}^*)
         &\leq \dist^2(x_{k};\mathcal{X}^*) +\tfrac{f(x_{k})-f^{*}}{\sigma^{2}\Vert\zeta_{k}\Vert^2}
         \Big(2\sigma\gamma\mu\dist^{\tfrac{1}{\delta}}(x_{k};\mathcal{X}^*) - (2\sigma-1)\mu\dist^{\tfrac{1}{\delta}}(x_{k};\mathcal{X}^*)\Big)\vspace{2mm}\\
         &\leq \dist^2(x_{k};\mathcal{X}^*) +\tfrac{\mu^{2}\dist^{\tfrac{2}{\delta}}(x_{k};\mathcal{X}^*)}{\sigma^{2}\Vert\zeta_{k}\Vert^2}
         \Big(2\sigma\gamma  - 2\sigma +1\Big)
         \leq \bigg(1 -\Big(2\sigma(1-\gamma)-1\Big)\sigma^{-2}\tau_{k}^{2} \bigg) \dist^2(x_{k};\mathcal{X}^*).
    \end{array}
\end{equation*}
Hence, the inequality \cref{eq:polyak1} is valid.
Additionally, inasmuch as $x_{k}\in \mathcal{T}_\gamma$ and $x_{k}\not\in \mathcal{X}^*$, we conclude $\tau_{k}\in (0,1]$ by \cref{lem:tau}, i.e., $0<1 - \Big(2\sigma(1-\gamma)-1\Big)\sigma^{-2}\tau_{k}^2 < 1$.
Hence, $\dist(x_{k+1};\mathcal{X}^*)< \dist(x_{k};\mathcal{X}^*)$, which yields
$x_{k+1} \in \mathcal{T}_\gamma$.\\
\ref{thm:polyak:p2} In order to prove the convergence of $\dist(x_k;\mathcal{X}^*)$, based on Assertion~\ref{thm:polyak:p1} either there exists some $k_{0}\in \mathbb{N}$ such that $\dist(x_k;\mathcal{X}^*) = 0$ for all $k\geq k_{0}$, or $\dist(x_k;\mathcal{X}^*)$ is a decreasing positive sequence.
We show that $\dist(x_k;\mathcal{X}^*) \to 0$ in the latter case. Suppose for contradiction that $\dist(x_k;\mathcal{X}^*)$ converges to some $d>0$.
Taking \cref{eq:polyak1} into account, we get
$d^2\leq \bigg(1-\Big(2\sigma(1-\gamma)-1\Big)\sigma^{-2}\tau^{2} d^{\tfrac{2}{\delta}-2}\bigg)~d^{2}$,
leading to a contradiction.
Hence, $\dist(x_k;\mathcal{X}^*)$ decreasingly converges to zero.
Moreover, the closedness of $\mathcal{X}^*$ implies that $x_{k}\to x^{*}$, for some $x^{*}\in \mathcal{X}^*$, i.e., it results in $f(x_{k})\to f(x^{*})=f^{*}$, due to continuity of $f$.\\ 
\ref{thm:polyak:p3} In this assertion, \cref{lem:tau} ensures that $\tau_{k}=\tau=\tfrac{\mu}{L} \in (0,1]$.
It follows from inequality \cref{eq:polyak1} that
$$\dist^{2}(x_{k+1};\mathcal{X}^*) \leq \bigg(1 - \Big(2\sigma(1-\gamma)-1\Big)\sigma^{-2}\tau^{2}\bigg) \dist^{2}(x_{k};\mathcal{X}^*),$$ where $0<1 -\Big(2\sigma(1-\gamma)-1\Big)\sigma^{-2}\tau^{2}<1$. Therefore, $\dist(x_k;\mathcal{X}^*)$ converges Q-linearly to zero with rate $\sqrt{1-\Big(2\sigma(1-\gamma)-1\Big)\sigma^{-2}\tau^2}$. Consequently, $x_k$ converges $\mathcal{X}^*$.\\
\ref{thm:polyak:p4}
The inequality \cref{eq:polyak1} can be rewritten as 
$$\dist^{\nicefrac{1}{\delta}}(x_k;\mathcal{X}^*) \leq \dfrac{\sigma^{2}}{\Big(2\sigma(1-\gamma)-1\Big)\tau^{2}}\Big(\dist(x_k;\mathcal{X}^*)-\dist(x_{k+1};\mathcal{X}^*)\Big).$$
Thus, the sublinear convergence of $\dist(x_k;\mathcal{X}^*)$ is guaranteed from
\cref{lem-sequence}.\\
\ref{thm:polyak:p5} Combining \cref{eq:polyak2} and \cref{eq:polyak3} with $\Vert \zeta_{k}\Vert\leq L$, it follows that
\begin{align*}
    \dist^2(x_{k+1};\mathcal{X}^*)
    &\leq \dist^2(x_{k};\mathcal{X}^*) +\tfrac{f(x_{k})-f^{*}}{\sigma^{2}\Vert\zeta_{k}\Vert^2}
    \Big(2\sigma\gamma(f(x_{k})-f^{*}) - (2\sigma-1)(f(x_{k})-f^{*})\Big)\\
    &\leq \dist^2(x_{k};\mathcal{X}^*) -\tfrac{(f(x_{k})-f^{*})^{2}}{\sigma^{2}L^2}
    \Big(2\sigma(1-\gamma)-1\Big)\leq \ldots\leq \dist^2(x_{0};\mathcal{X}^*) -\tfrac{2\sigma(1-\gamma)-1}{\sigma^{2}L^2} \sum_{i=0}^{k} (f(x_{i})-f^{*})^{2}\\
    &\leq \dist^2(x_{0};\mathcal{X}^*) -\tfrac{2\sigma(1-\gamma)-1}{\sigma^{2}L^2} (f^{*}_{k}-f^{*})^{2} (k+1).
\end{align*}
Since $\dist^2(x_{k+1};\mathcal{X}^*)\geq 0$, the inequality \cref{eq:polyak:pv} is obtained, adjusting our claims.
\end{proof}


\section{Application to robust matrix recovery}\label{sec:NumRes}
This section presents a detailed analysis of our numerical experiments to demonstrate the efficiency of various projected subgradient methods employing several step-size strategies, including the following:
\begin{itemize}
    \item[$\bullet$] Diminishing: \Cref{alg:subgradient} with the step-size $\alpha_k:=\frac{\alpha_0}{\sqrt{k+1}}$ and $\alpha_0>0$;
    \item[$\bullet$] Decaying: \Cref{alg:subgradient} with the step-size $\alpha_k:=\alpha_0 \lambda^k$  and $\alpha_0>0$;
    \item[$\bullet$] Polyak: \Cref{alg:subgradient} with the step-size $\alpha_k:= \tfrac{f(x_k)-f^*}{\Vert\zeta_k\Vert}$;
    \item[$\bullet$] Scaled Polyak: \Cref{alg:subgradient} with the step-size $\alpha_k:=\tfrac{f(x_k)-f^*}{4\Vert\zeta_k\Vert}$.
\end{itemize}

\subsection{{\bf Robust low-rank matrix recovery}}
The experiments are performed on various applications of {\it robust low-rank matrix recoveries} using different datasets. The applications include {\it robust matrix completion} on the MovieLens dataset, {\it image inpainting}, {\it robust nonnegative matrix factorization for face recognition}, and {\it matrix compression}. The basis of all these applications is the robust matrix recovery problem
\begin{equation} \label{eq:robustMC}
    \min_{U,V} \|X - UV\|_1, 
\end{equation}
where the matrix $X$ is approximated by $UV$. Here, $U \in \mathbb{R}^{m \times r}$ and $V \in \mathbb{R}^{r \times n}$ represent low-rank matrices \cite{li2020nonconvex}. This formulation results in a nonconvex and nonsmooth paraconvex optimization problem (as confirmed in \Cref{pro:para-comp}), which presents significant computational challenges. 
In our implementation, the code was entirely written in Python (a $3.11.7$ version) and tested on a MacBook Pro, macOS Ventura $13.6.6$, equipped with 32 GB of RAM.

\subsubsection{{\bf Robust matrix completion problem}}
In data mining and machine learning, many datasets are represented as matrices, often with missing values ranging from minimal to substantial. The problem of {\it robust matrix completion (RMC)} \cite{wang2012probabilistic} is formulated as 

\begin{equation}\label{eq:mc}
    \min_{U\in\R^{m\times r},V\in\R^{r\times n}}  \frac{1}{2} \|M\odot(X-UV)\|_1,
\end{equation}
where $\odot$ stands for the Hadamard product of two matrices. Missing values can be addressed by defining a binary weight matrix $M$ given by
\begin{align*}
    M_{ij}=\left\{
        \begin{array}{ll}
            1 &~~~~ \mathrm{if}~ X_{ij}~ \mathrm{is~known},\\
            0 &~~~~ \mathrm{if}~ X_{ij}~ \mathrm{is~unknown}.
        \end{array}
    \right.
\end{align*}
Recovering such matrices is challenging due to the sparsity of observed values, compounded by noise, corruption, and outliers.

We evaluated our method on the widely-used MovieLens (ML-100K) dataset, which contains approximately 100,000 ratings from 943 users for 1,682 movies (ratings range from 1 to 5). The dataset was split into an $80\%$ training set and a $20\%$ test set to evaluate reconstruction quality. The proposed algorithms are applied to the problem \cref{eq:mc} with the rank $r=20$ and a maximum of $1000$ iterations. For the decaying strategy, the parameters are set as $\gamma_k =0.95$ and $\alpha_0 =10^{-3}$,  while for the diminishing strategy, $\alpha_0=10^{-3}$. The reconstruction accuracy is measured by the {\it root mean squared error (RMSE)}, and the results are presented in \Cref{tab:rmse_mc}. The convergence behavior is analyzed based on the loss values, where the results are presented in \Cref{fig:loss_mc}.
To demonstrate the robustness of our algorithm, we employed two different initialization methods: one based on SVD (shown in the left subfigure of \Cref{fig:loss_mc}) and the other with a random starting point (shown in the right subfigure of \Cref{fig:loss_mc}).

\Cref{fig:loss_mc} illustrates that the Polyak and scaled Polyak strategies achieved lower loss values compared to the decaying and diminishing strategies under both initialization methods. However, \Cref{tab:rmse_mc} indicates the decaying strategy achieved the best performance with an RMSE of $1.153$, closely followed by Scaled Polyak with an RMSE of $1.254$. Both methods indicate fast convergence and accurate predictions. In contrast, the diminishing algorithm results in the highest RMSE of $1.656$. 

\begin{table}[htp!]
    \centering
    \begin{tabular}{|l|c|c|c|c|}
        \hline
        Step-size&Polyak& \text{Scaled Polyak} &Diminishing& Decaying \\ \hline
        svd initialization&1.460 & 1.254  &   1.655983 & 1.153\\ \hline
          random initialization    &1.350&  1.288&   1.085& 1.066\\\hline
    \end{tabular}
    \caption{RMSE values for the MovieLens 100k dataset.}
    \label{tab:rmse_mc}
\end{table}

\begin{figure}[!htbp]
\centering
\subfloat[Loss values with SVD starting point.]
{\includegraphics[width=7.5cm]{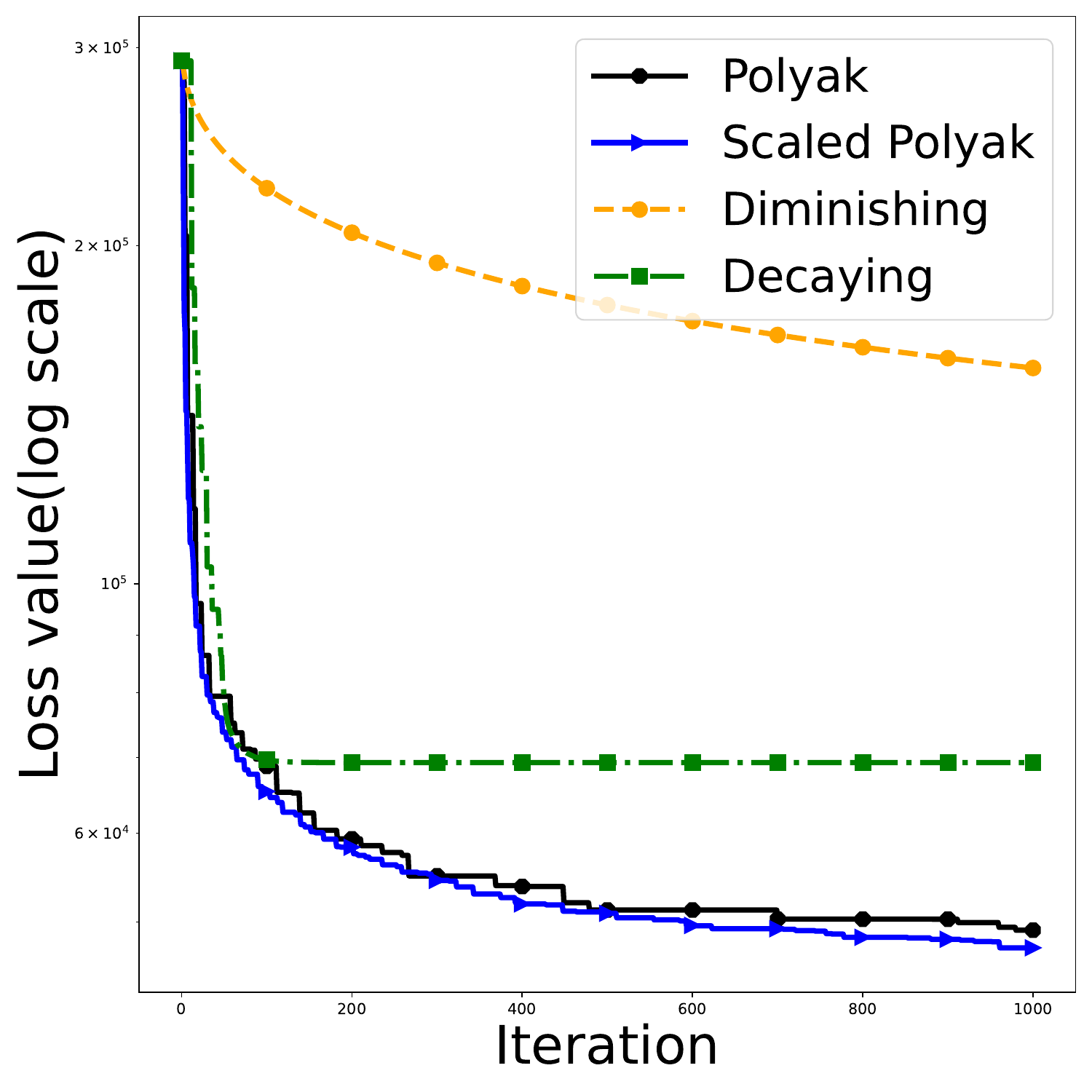}}
\hspace{0.2cm}
\subfloat[Loss values with random starting point.]
{\includegraphics[width=7.5cm]{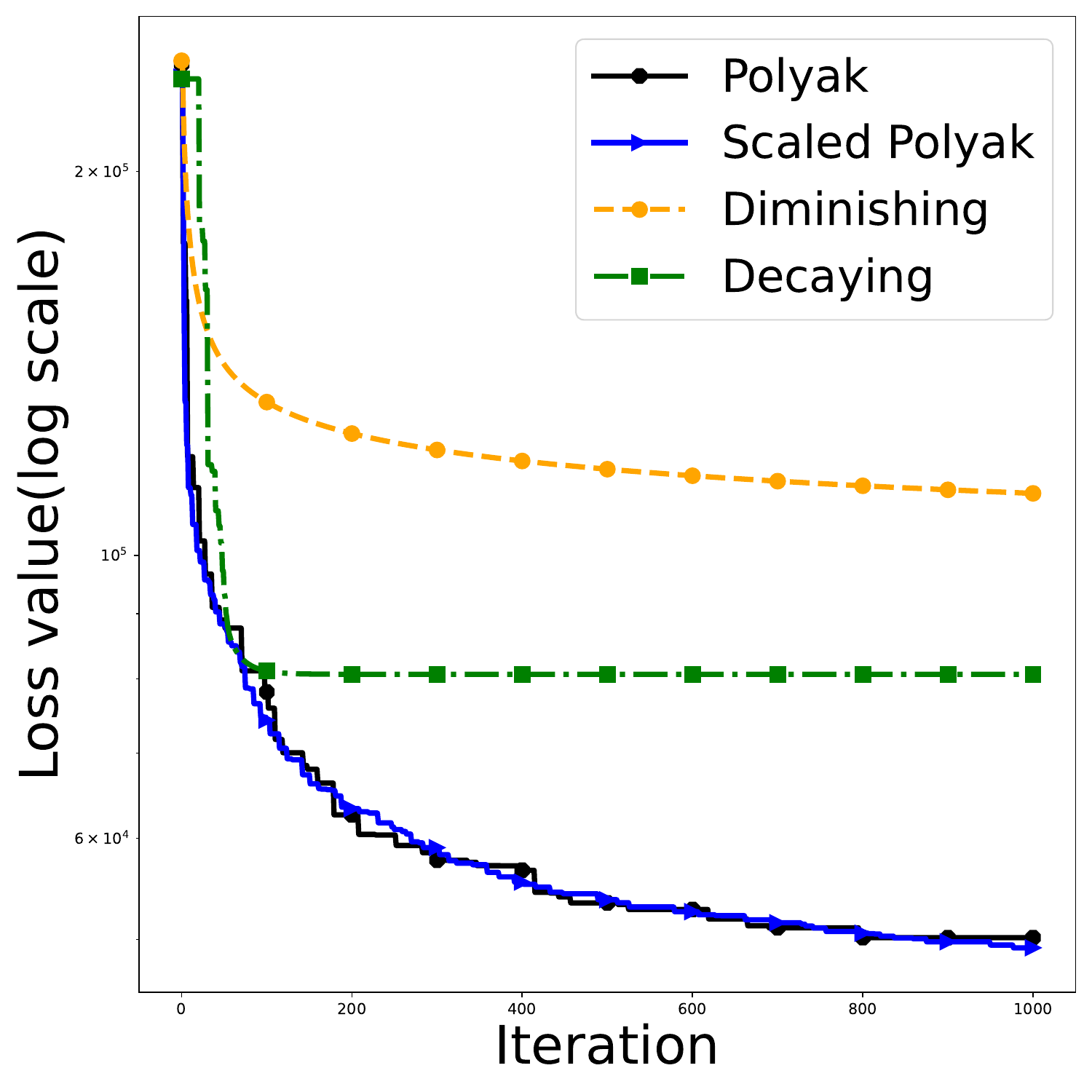}}
\caption{Convergence of the loss function (log scale) over 1000 iterations for the MovieLens 100K dataset, comparing four step-size strategies: Polyak, scaled Polyak, diminishing, and decaying using two initial points.}
\label{fig:loss_mc}
\end{figure}

\subsubsection{{\bf Image inpainting problem}}
Image inpainting aims to reconstruct missing or corrupted regions of an image in order to restore it as closely as possible to its original form; cf. \cite{guillemot2013image}. Here, we applied robust matrix completion (RMC) to images corrupted with 
 $40\%$ noise, where  $40\%$ of the pixel values were masked. 
The experimental setup uses the following parameters: $r=100$, $iterations=1000$, and $\alpha_0=10^{-2}$ for the diminishing step-size and $\alpha_0=10^{-3}$ for the decaying step-size. The quality of reconstruction is assessed using the {\it peak signal-to-noise ratio (PSNR)} and the {\it signal-to-noise ratio (SNR}), summarized in \Cref{tab:psnrimages4per}, and the loss function evolution in \Cref{fig:loss_image_4per}. The main images with their $40\%$ corrupted are plotted in \Cref{fig:inpaint_org}, and the reconstructed ones are plotted in \Cref{fig:inpaint_rec}.

\begin{table}[!ht]
\centering
\begin{tabular}{|l|cc|cc|cc|cc|}
\hline
\multirow{2}{*}{Image} & \multicolumn{2}{c|}{\textbf{Polyak}} & \multicolumn{2}{c|}{\textbf{Scaled Polyak}} & \multicolumn{2}{c|}{\textbf{Diminishing}} & \multicolumn{2}{c|}{\textbf{Decaying}} \\ \cline{2-9}
 & PSNR & SNR & PSNR & SNR & PSNR & SNR & PSNR & SNR \\ \hline
lighthouse & 20.86 & 7.39 & 21.25 & 7.78 & 17.64 & 4.17 & 15.06 & 1.60 \\
man        & 25.99 & 13.12 & 26.47 & 13.60 & 24.22 & 11.34 & 20.91 & 8.04 \\
kiel       & 23.04 & 8.11 & 21.32 & 6.40 & 18.05 & 3.13 & 15.10 & 0.18 \\
houses     & 21.76 & 9.77 & 20.96 & 8.97 & 16.50 & 4.51 & 14.05 & 2.06 \\
\hline
\end{tabular}
\caption{Quantitative evaluation of image reconstruction using the PSNR and SNR for four images (Lighthouse, Man, Kiel, and Houses), corrupted with $40\%$ noise. The results are reported for four step-size strategies: Polyak, Scaled Polyak, Diminishing, and Decaying. All values are rounded to two decimals.}\label{tab:psnrimages4per}
\end{table}

\begin{figure}[!htbp]
\centering
\subfloat[Original Man]{\includegraphics[width=4cm]{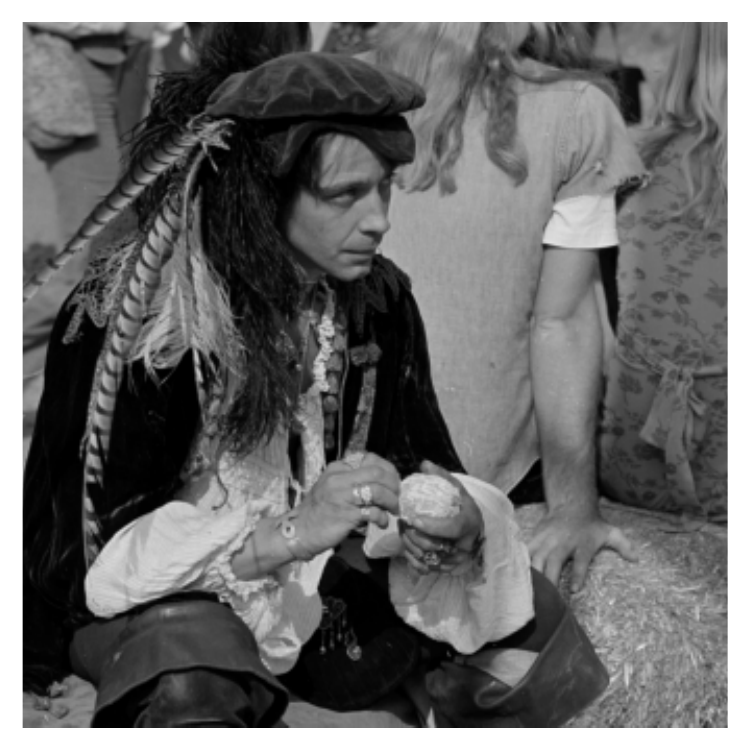}}%
\hspace{0.1cm}
\subfloat[Original Kiel]{\includegraphics[width=4cm]{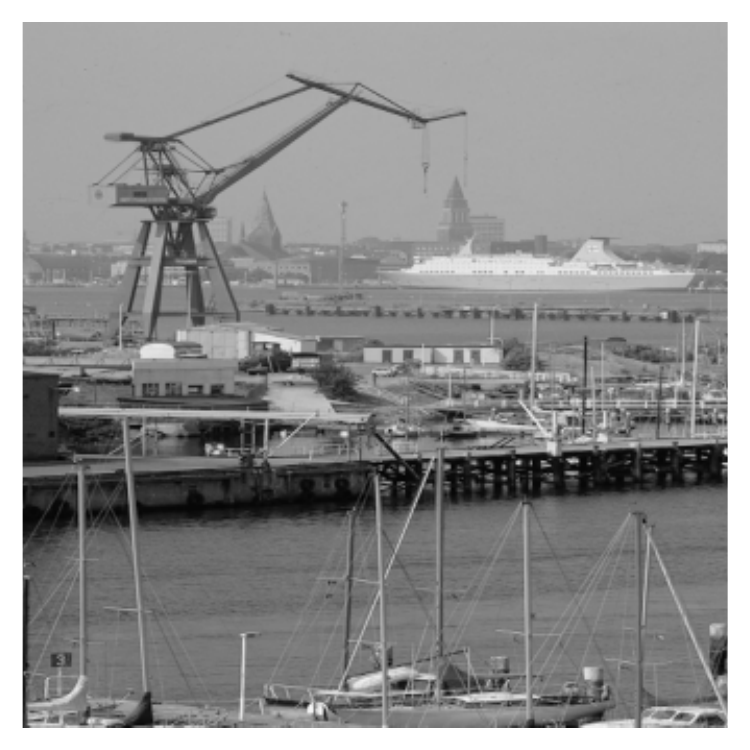}}%
\hspace{0.1cm}
\subfloat[Original Lighthouse]{\includegraphics[width=4cm]{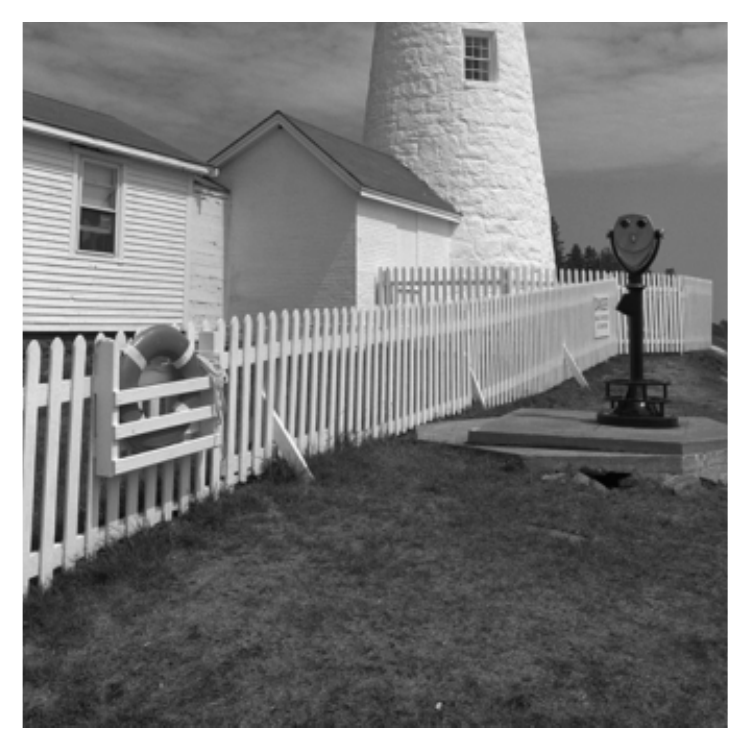}}%
\hspace{0.1cm}
\subfloat[Original Houses]{\includegraphics[width=4cm]{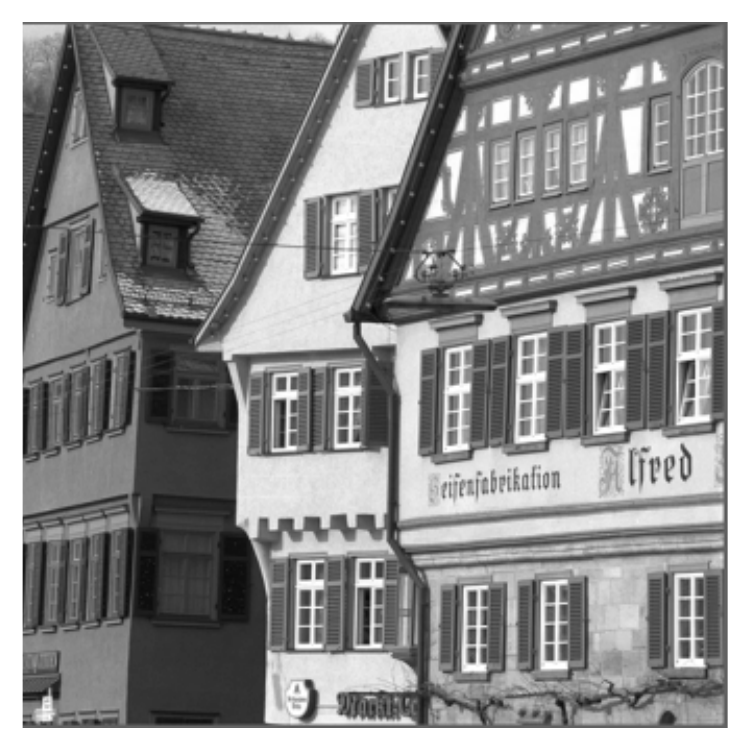}}%
\hspace{0.1cm}
\subfloat[Corrupted Man]{\includegraphics[width=4cm]{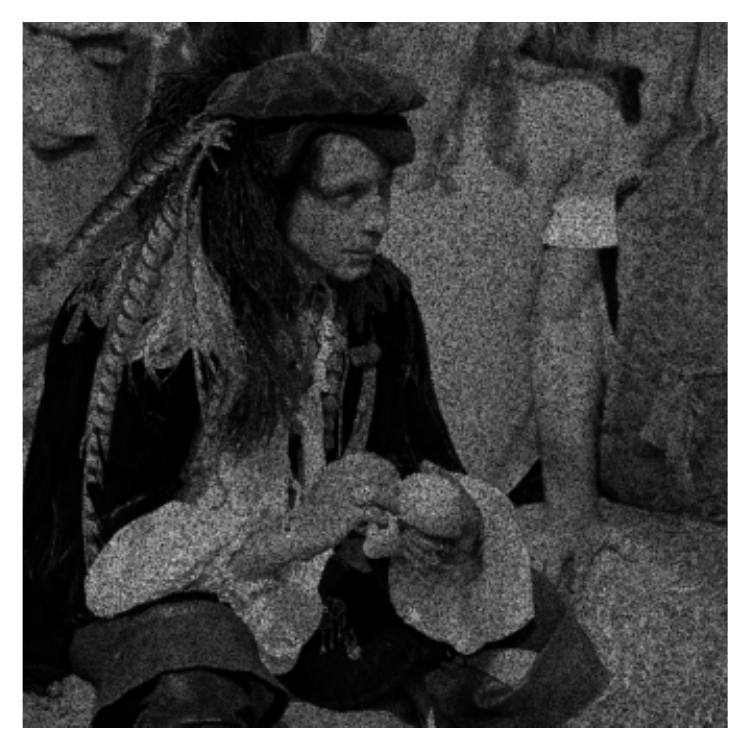}}%
\hspace{0.1cm}
\subfloat[Corrupted Kiel]{\includegraphics[width=4cm]{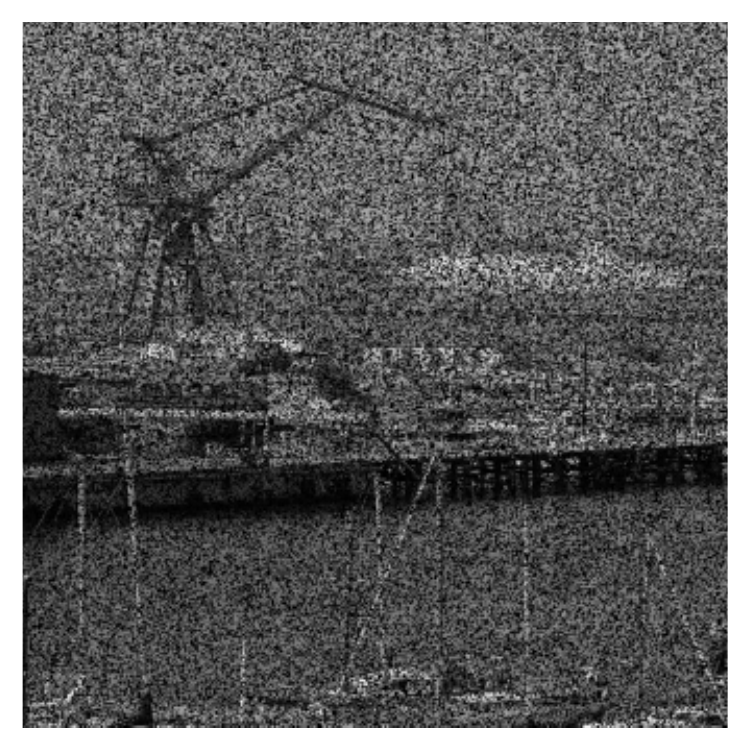}}%
\hspace{0.1cm}
\subfloat[Corrupted Lighthouse]{\includegraphics[width=4cm]{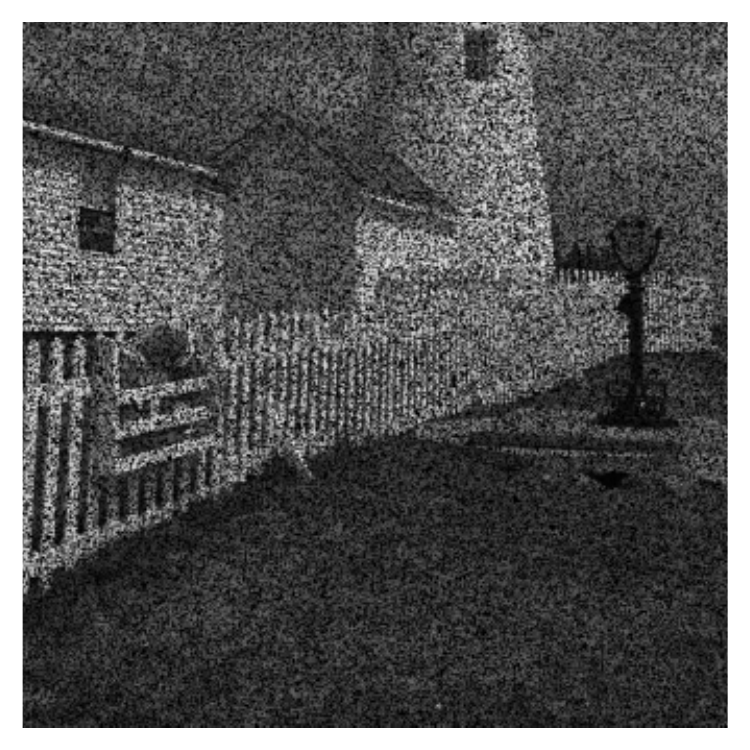}}%
\hspace{0.1cm}
\subfloat[Corrupted Houses]{\includegraphics[width=4cm]{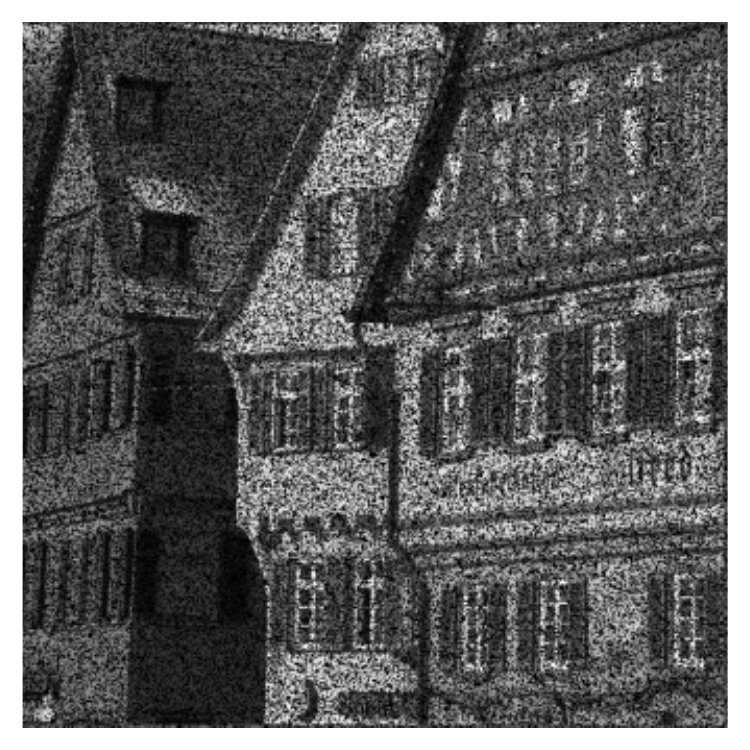}}%
\hspace{0.1cm}
\caption{The first row displays the original images: "Man" (a), "Kiel" (b), "Lighthouse" (c), and "Houses" (d). The second row shows the corresponding corrupted images (e–h) with $40\%$ random uniform noise applied.}
\label{fig:inpaint_org}
\end{figure}

The results in \Cref{fig:inpaint_rec,fig:loss_image_4per} and \Cref{tab:psnrimages4per} show that the Polyak and Scaled Polyak strategies consistently outperformed the diminishing and decaying methods across all metrics. Notably, the Scaled Polyak strategy achieved the highest PSNR and SNR values for most images, including the PSNR of 26.47 dB and the SNR of 13.60 for the 'Man' image, indicating superior reconstruction accuracy. The Polyak strategy also delivered strong results, with a PSNR of 25.99 dB and an SNR of 13.12 for the same image. In contrast, the diminishing and decaying methods demonstrated significantly poorer performance.
The convergence plots in \Cref{fig:loss_image_4per} highlight that Scaled Polyak and Polyak strategies provide the fastest and smoothest loss reductions, outperforming the diminishing and decaying methods. The decaying strategy showed minimal progress after an initial decrease, while the diminishing strategy exhibited slow and inconsistent convergence. The scaled Polyak strategy achieved the best convergence rates, which is closely followed by the Polyak strategy.

\begin{figure}[!htbp]
\captionsetup{justification=raggedright,singlelinecheck=false}
\centering
\subfloat[Scaled Polyak for Lighthouse]{\includegraphics[width=4.2cm]{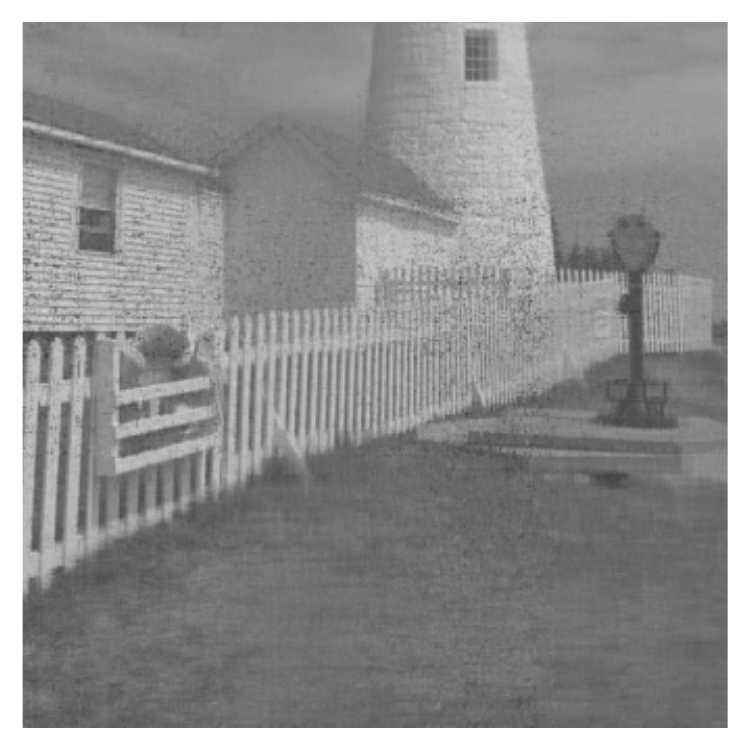}}%
\hspace{-0.1cm}
\subfloat[Polyak for Lighthouse]{\includegraphics[width=4.2cm]{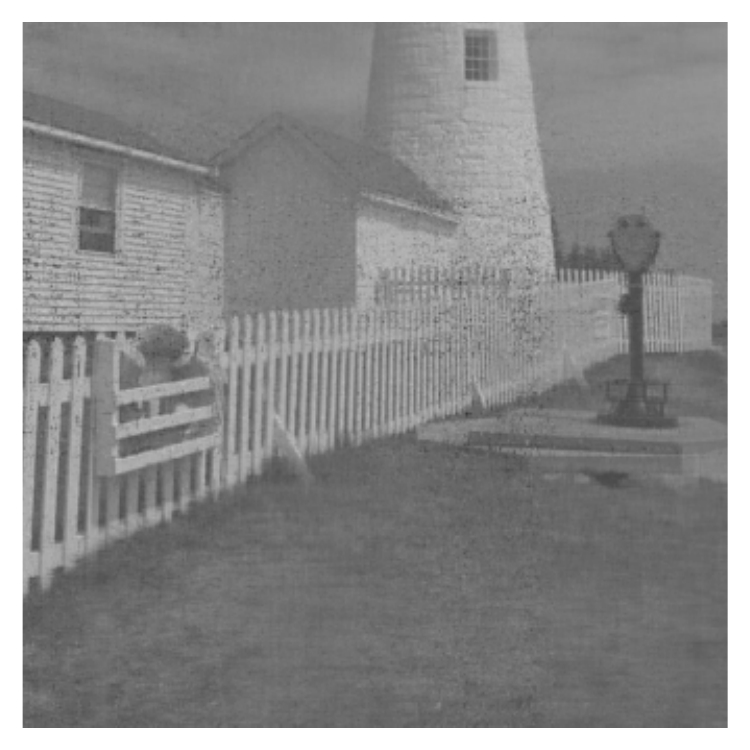}}%
\hspace{-0.1cm}
\subfloat[Decaying for Lighthouse]{\includegraphics[width=4.2cm]{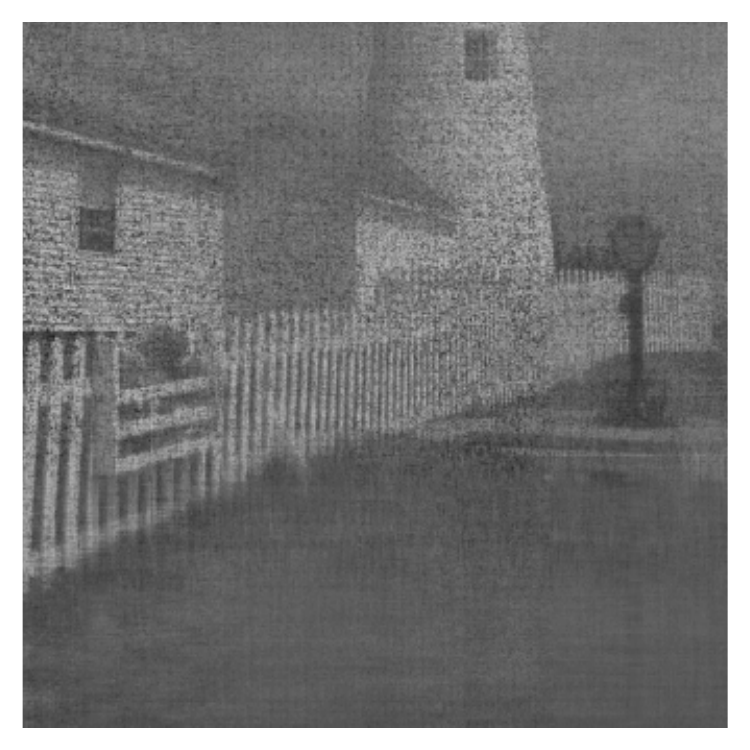}}%
\hspace{-0.1cm}
\subfloat[Diminishing for Lighthouse]{\includegraphics[width=4.2cm]{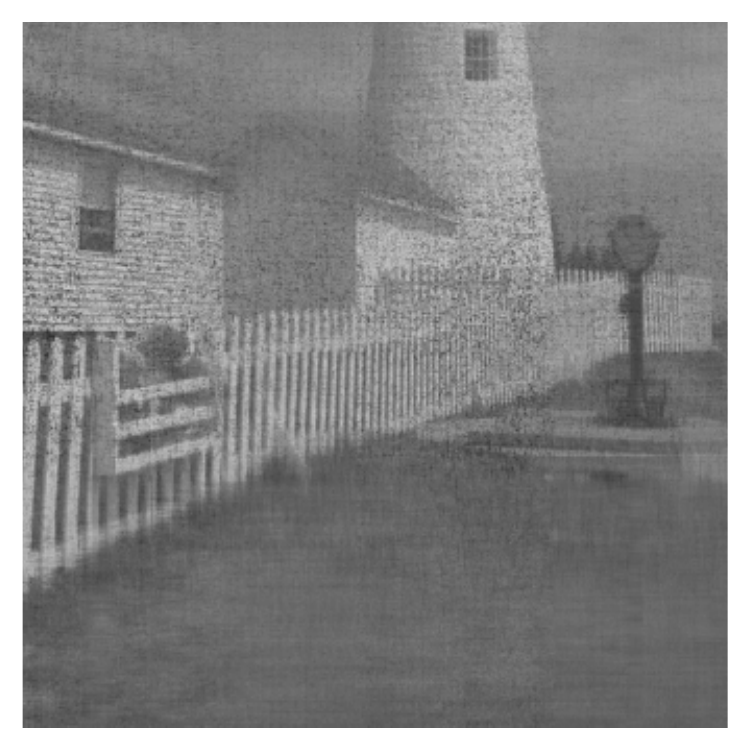}}%
\hspace{-0.1cm}
\subfloat[Scaled Polyak for Man.]{\includegraphics[width=4.2cm]{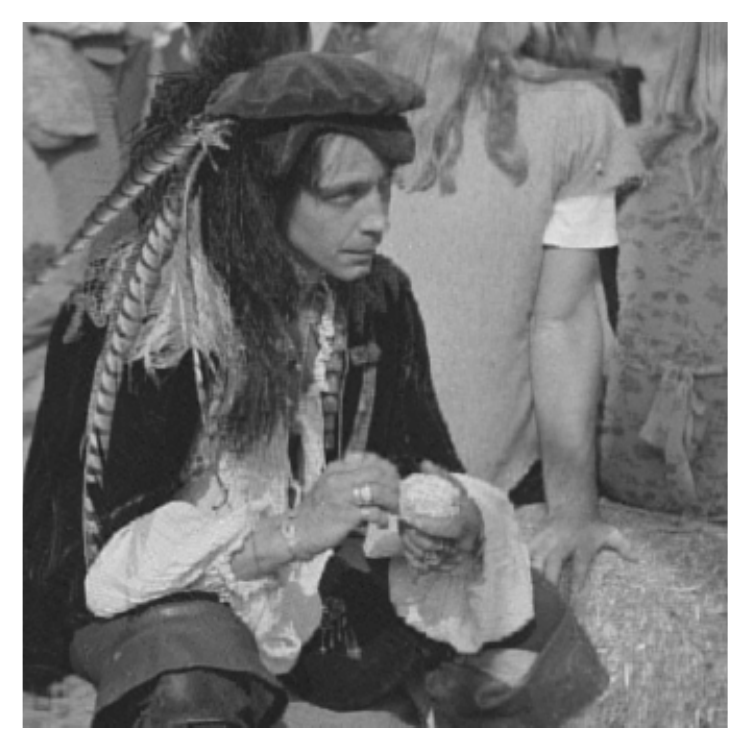}}%
\hspace{-0.1cm}
\subfloat[Polyak for Man]{\includegraphics[width=4.2cm]{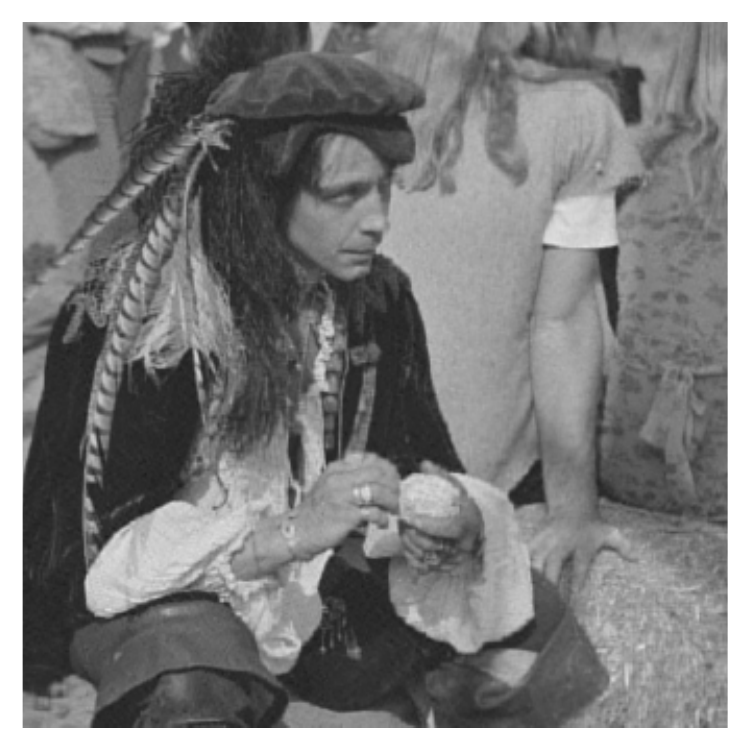}}%
\hspace{-0.1cm}
\subfloat[Decaying for Man]{\includegraphics[width=4.2cm]{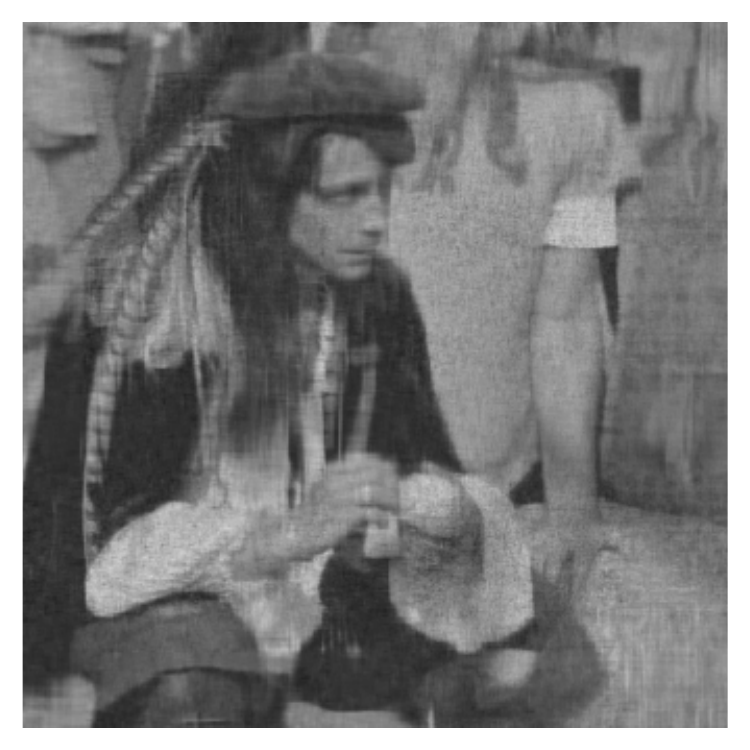}}%
\hspace{-0.1cm}
\subfloat[Diminishing for Man]{\includegraphics[width=4.2cm]{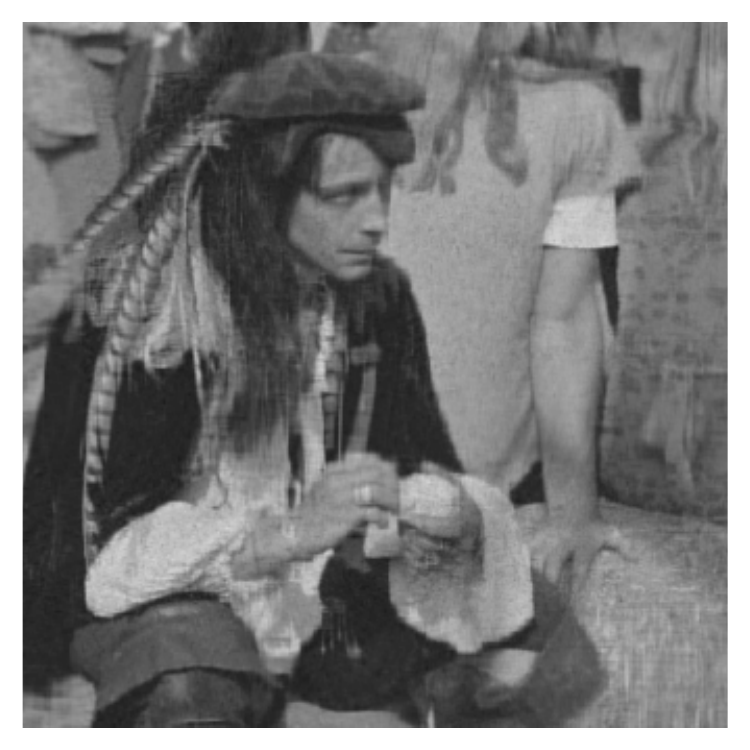}}%
\hspace{-0.1cm}
\subfloat[Scaled Polyak for Kiel.]{\includegraphics[width=4.2cm]{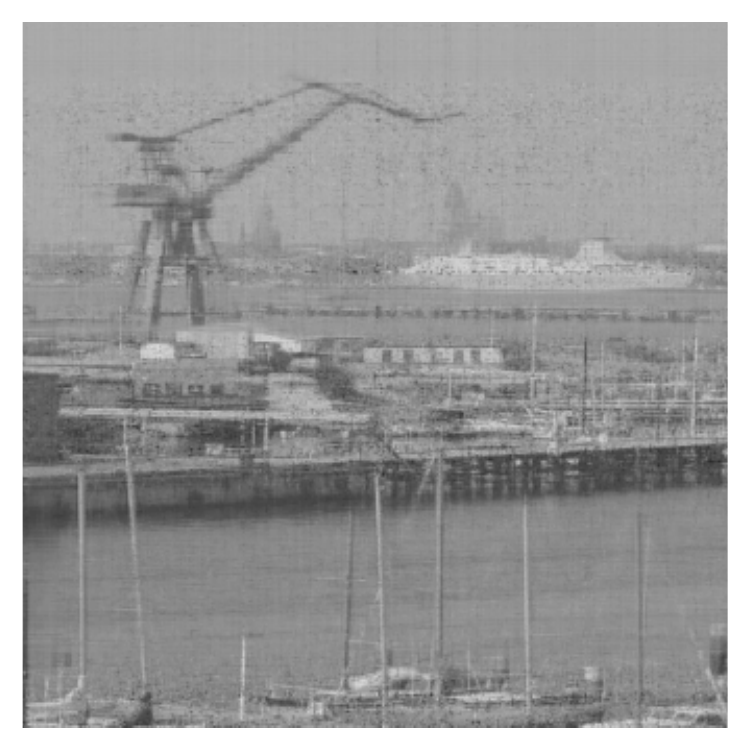}}%
\hspace{-0.1cm}
\subfloat[Polyak for Kiel]{\includegraphics[width=4.2cm]{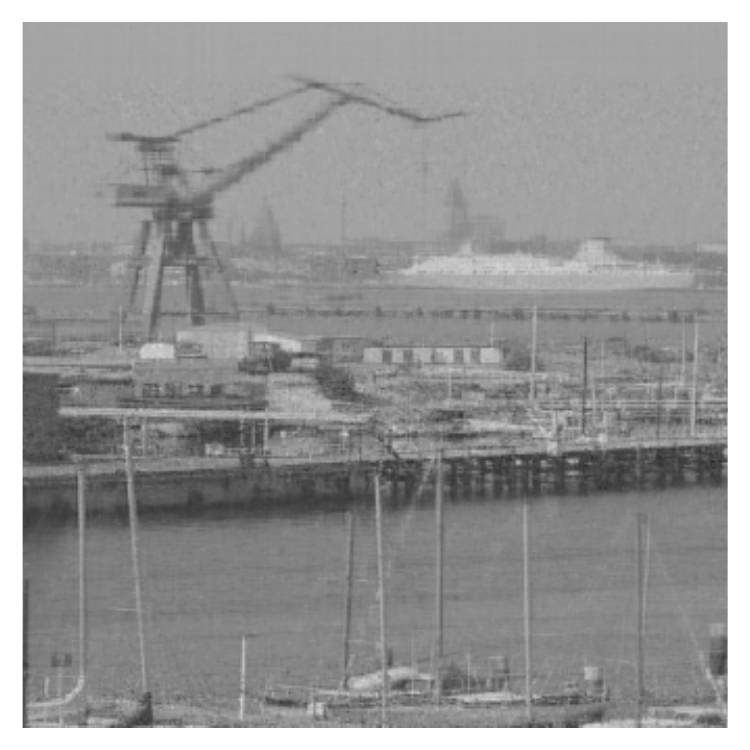}}%
\hspace{-0.1cm}
\subfloat[Decaying for Kiel]{\includegraphics[width=4.2cm]{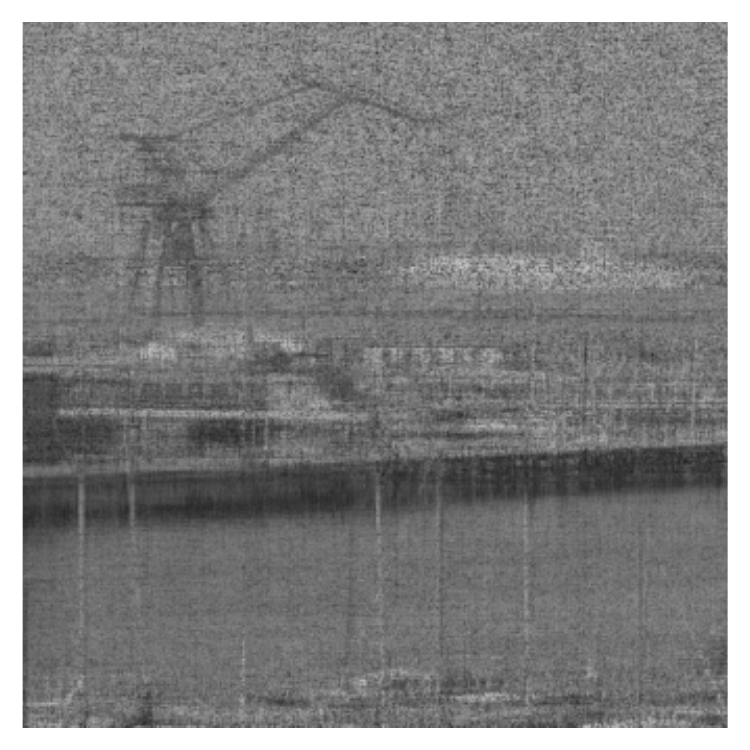}}%
\hspace{-0.1cm}
\subfloat[Diminishing for Kiel]{\includegraphics[width=4.2cm]{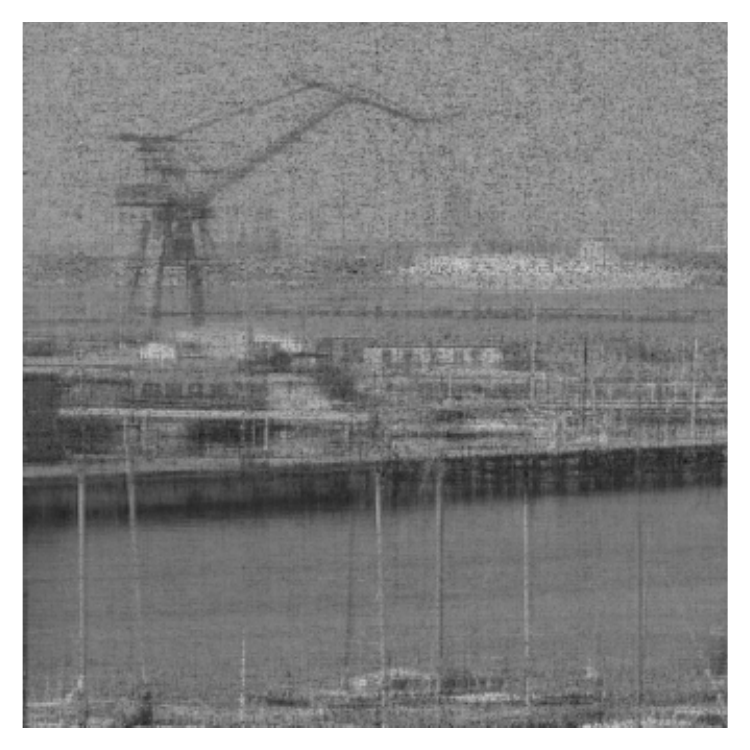}}%
\hspace{-0.1cm}
\subfloat[Scaled Polyak for Houses.]{\includegraphics[width=4.2cm]{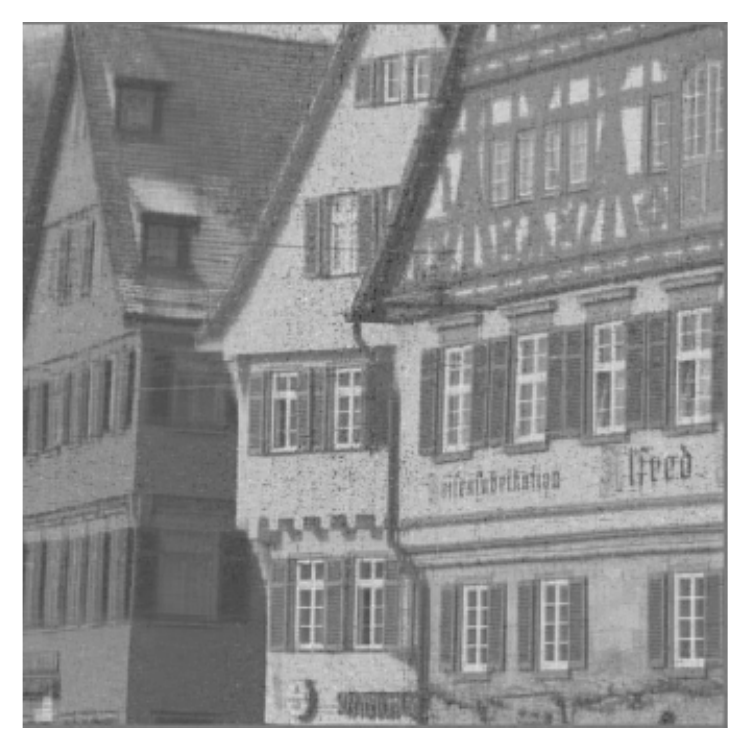}}%
\hspace{-0.1cm}
\subfloat[Polyak for Houses]{\includegraphics[width=4.2cm]{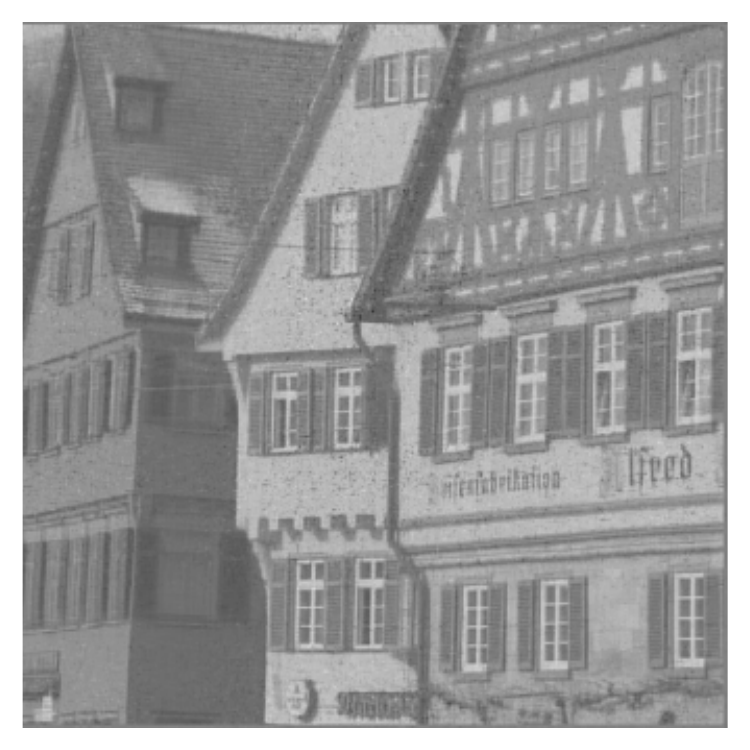}}%
\hspace{-0.1cm}
\subfloat[Decaying for Houses]{\includegraphics[width=4.2cm]{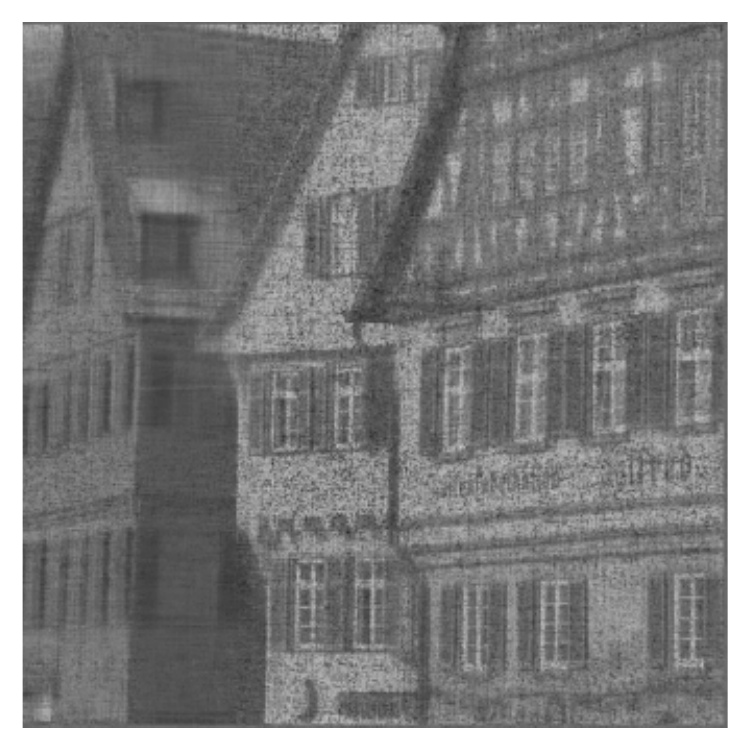}}%
\hspace{-0.1cm}
\subfloat[Diminishing for Houses]{\includegraphics[width=4.2cm]{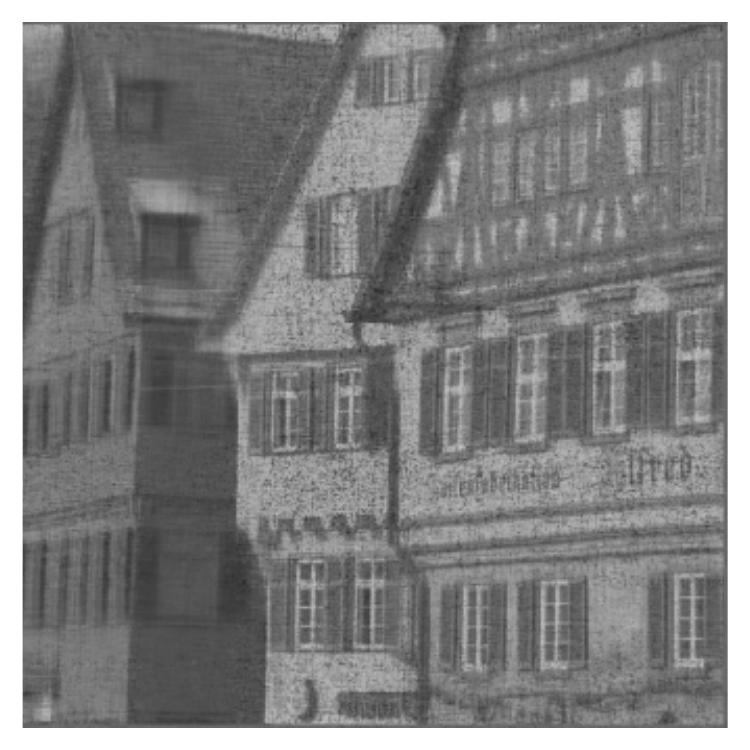}}%
\hspace{-0.1cm}
\caption{Qualitative comparison of image inpainting results for four corrupted images (lighthouse, man, kiel, and houses) using Polyak, scaled Polyak, diminishing, and decaying step-size strategies. Each column shows the reconstructions for the step-size method.}
\label{fig:inpaint_rec}
\end{figure}

\begin{figure}[htp!]
\centering
\subfloat[Loss values reconstruction for Lighthouse.]{\includegraphics[width=7.5cm]{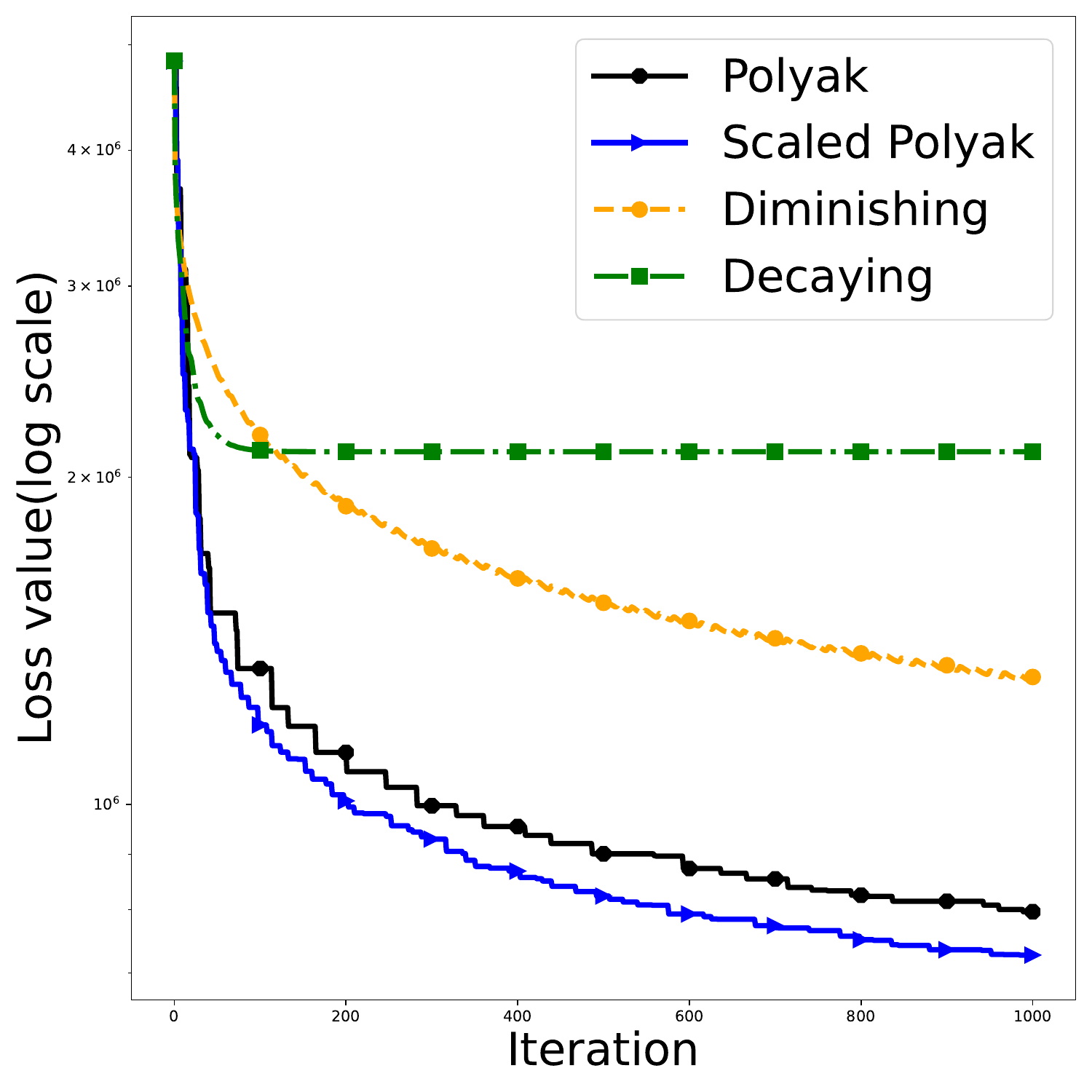}}%
\hspace{0.1cm}
\subfloat[Loss value reconstruction for Man.]{\includegraphics[width=7.5cm]{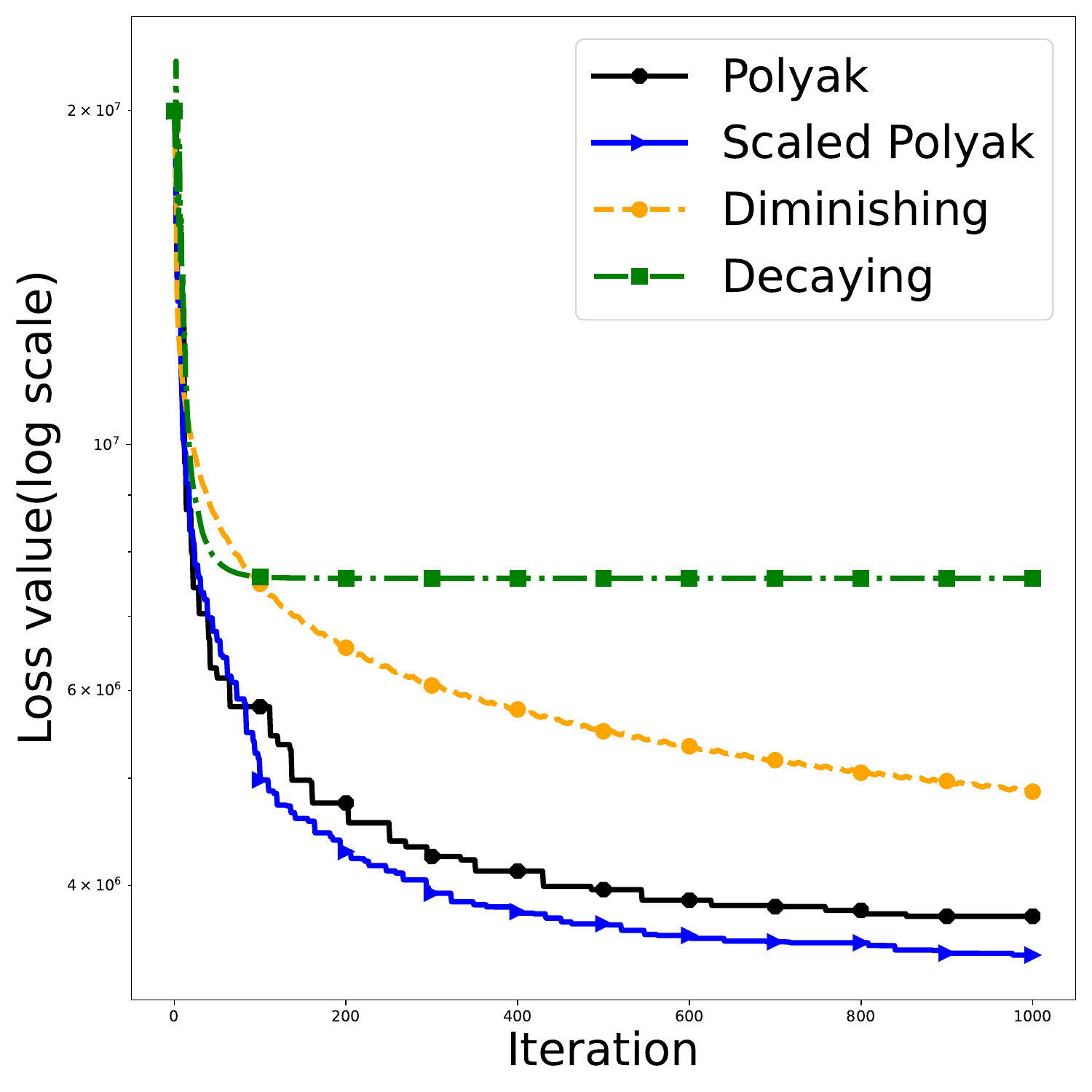}}%
\hspace{0.1cm}
\subfloat[Loss value reconstruction for Kiel.]{\includegraphics[width=7.5cm]{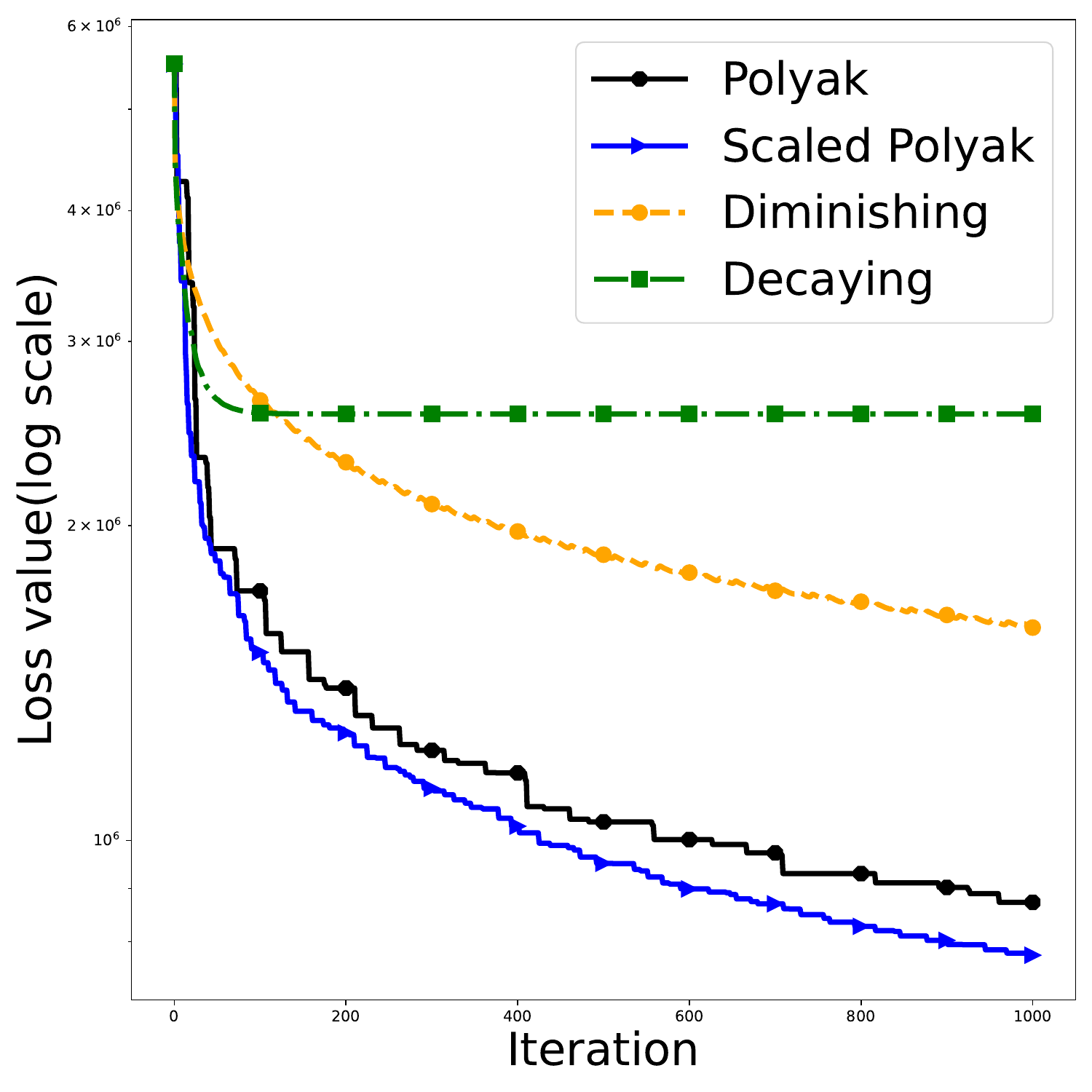}}%
\hspace{0.1cm}
\subfloat[Loss value reconstruction for House.]{\includegraphics[width=7.5cm]{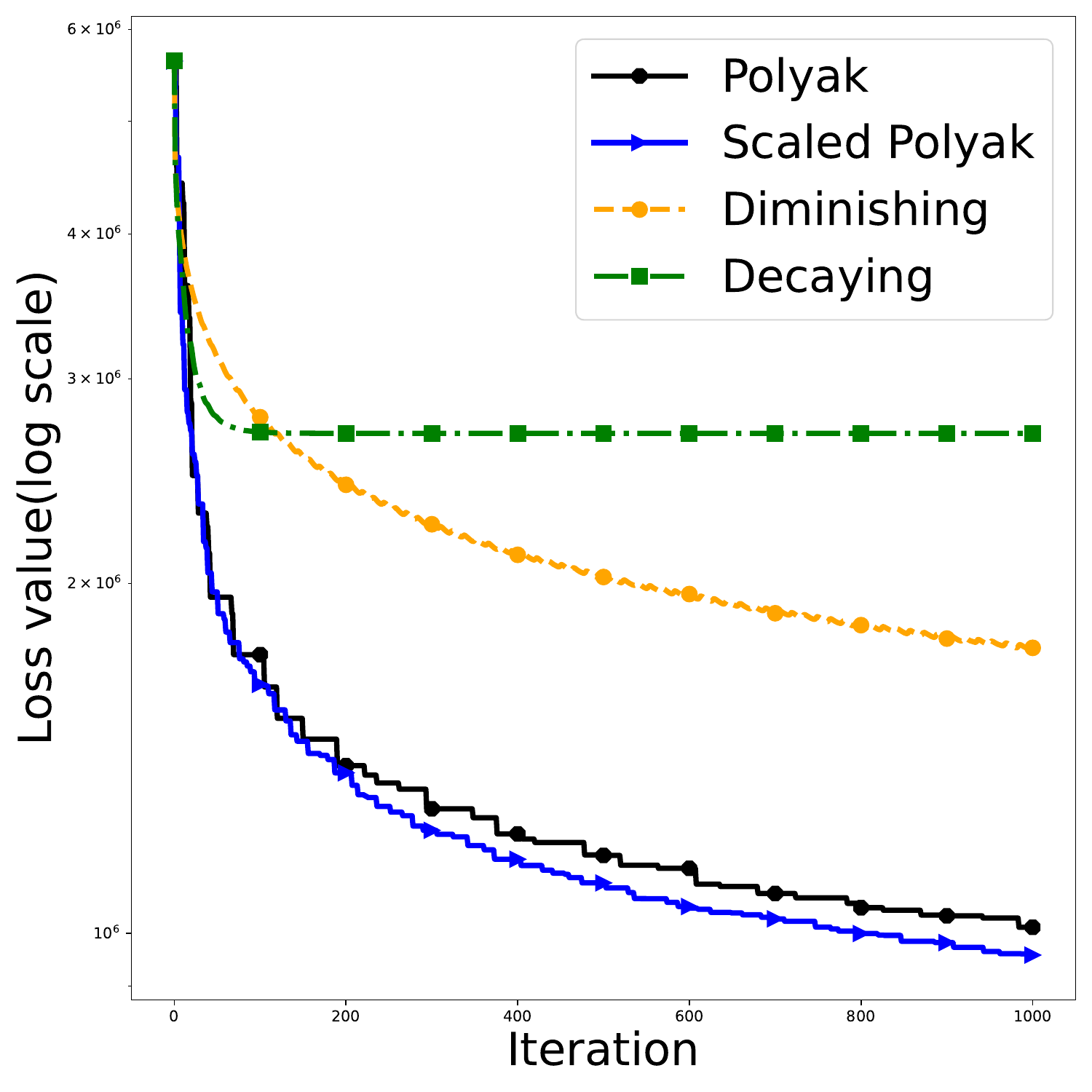}}%
\hspace{0.1cm}
\caption{Qualitative comparison of image inpainting results for four corrupted images (Lighthouse, Man, Kiel, and Houses) using Polyak, scaled Polyak, diminishing, and decaying step-size strategies. Each column shows the reconstructions for the step-size method.}
\label{fig:loss_image_4per}
\end{figure}

\subsubsection{{\bf Matrix compression using RNMF}}
Matrix compression reduces the storage of large matrices and computational demands by approximating them with compact representations that retain essential information while minimizing redundancy; cf. \cite{yuan2005projective}. This process is crucial in fields such as data analysis, machine learning, and scientific computing, where efficient handling of high-dimensional data is vital. Techniques such as low-rank approximations, including nonnegative matrix factorization (NMF), are widely used. These methods decompose the original matrix into two lower-rank factors that, when combined, approximate the original data, enabling efficient storage, faster computations, and, in some cases, enhanced interpretability of the underlying structures.

Here, we applied the projected subgradient method to the reformulated robust matrix completion (RMC) problem in \cref{eq:robustMC}, treating it as a robust nonnegative matrix factorization (RNMF) problem
\begin{equation}\label{eq:RNMF}
\min_{U\geq 0\in\R^{m\times r},V\geq 0 \in\R^{r\times n}} \frac{1}{2} \|X-UV\|_1, 
\end{equation}
using the aforementioned step-size strategies on the 'Man' image. The goal is to compress the image by representing it with two low-rank matrices for different ranks ($r=5,10,50,100$).

The results in \Cref{fig:compress_rec,fig:compress_loss} illustrate the impact of rank on reconstruction quality during matrix compression. Notably, the reconstruction quality is improved across all step-size strategies as the rank increased from 5 to 100. For lower ranks (5 and 10), the Decaying and Diminishing methods are struggling, leaving significant visual artifacts in the reconstructions. In contrast, the scaled Polyak and Polyak methods demonstrate better stability and higher-quality reconstructions, even at lower ranks. All methods improved at higher ranks (50 and 100), but the Scaled Polyak method consistently produced sharper and more accurate results.
\Cref{fig:compress_loss} further highlights the convergence behavior of the step-size strategies across ranks. The Decaying method stagnates early and fails to reduce the loss significantly. The Diminishing method shows some progress, but at a slower rate. However, the Polyak and scaled Polyak methods effectively reduce the loss, with the scaled Polyak consistently achieving the lowest final loss values. These findings demonstrate that while rank significantly impacts reconstruction performance, the Polyak-based methods remain robust across different ranks, rendering them particularly effective for matrix compression tasks.

\begin{figure}[!htb]
\centering
\subfloat[Decaying rank 5.]{\includegraphics[width=3.8cm]{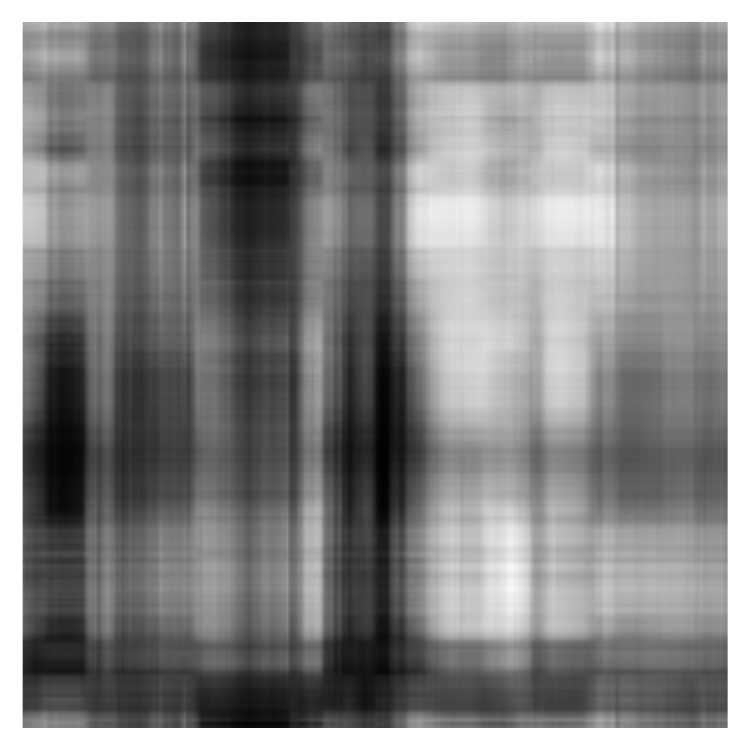}}%
\hspace{0.1cm}
\subfloat[Decaying rank 10.]{\includegraphics[width=3.8cm]{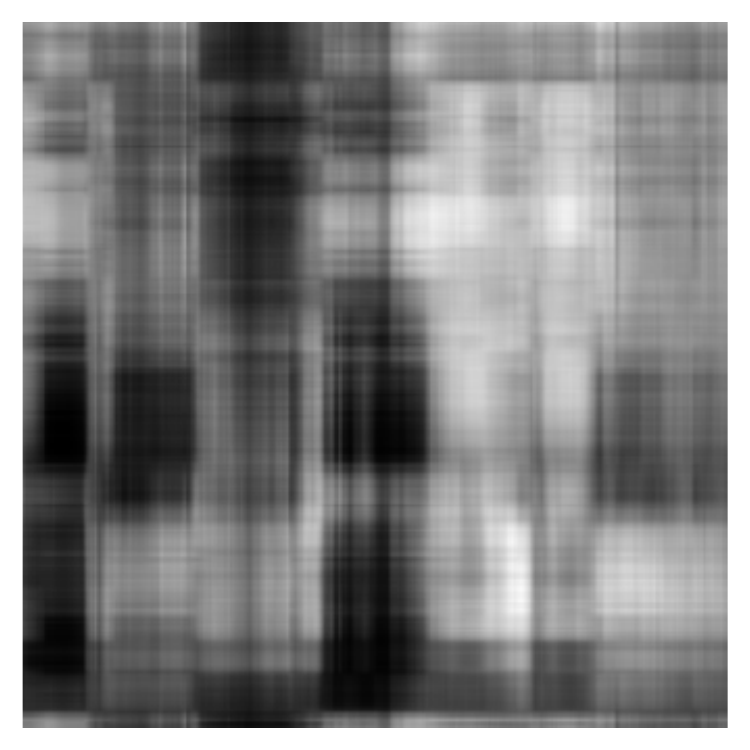}}%
\hspace{0.1cm}
\subfloat[Decaying rank 50.]{\includegraphics[width=3.8cm]{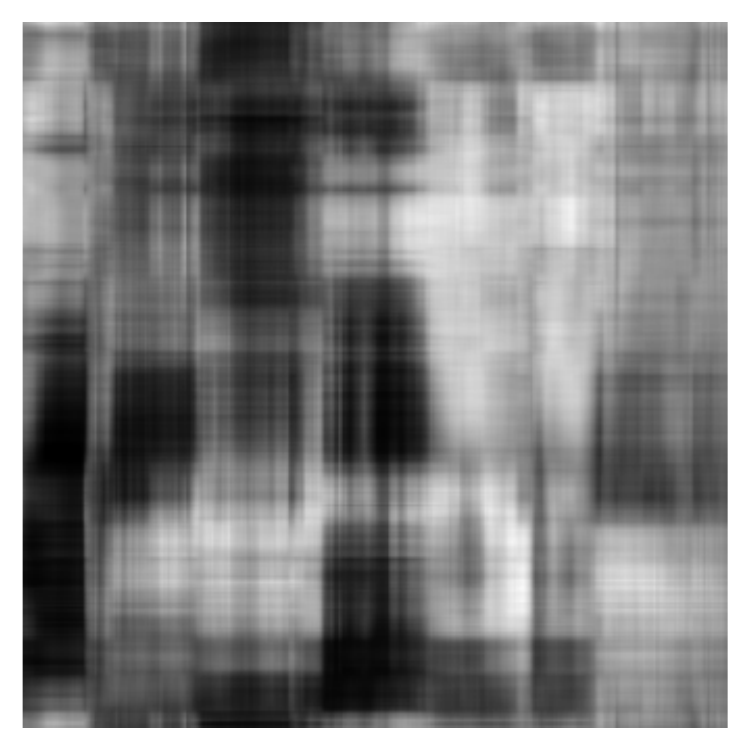}}%
\hspace{0.1cm}
\subfloat[Decaying rank 100.]{\includegraphics[width=3.8cm]{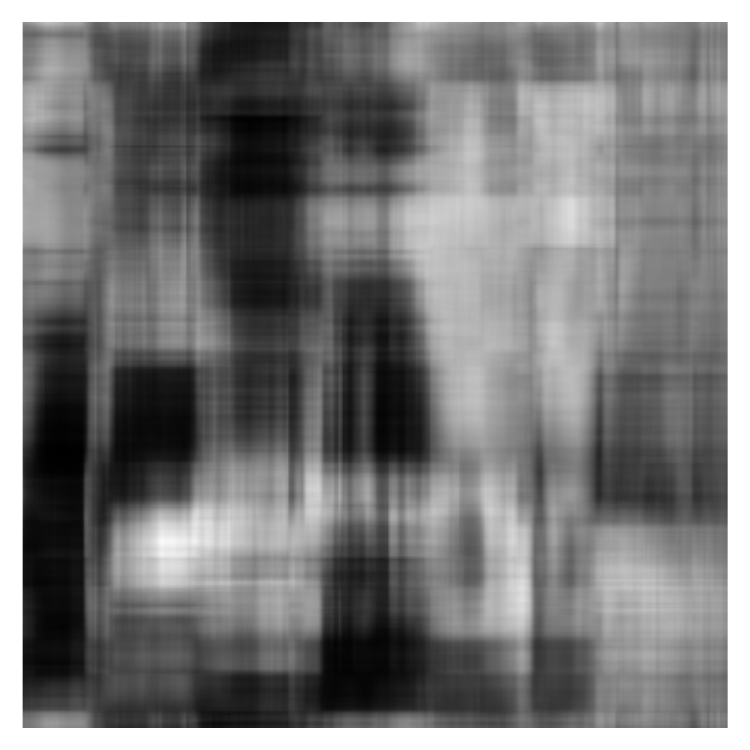}}%
\hspace{0.1cm}
\subfloat[Diminishing rank 5.]{\includegraphics[width=3.8cm]{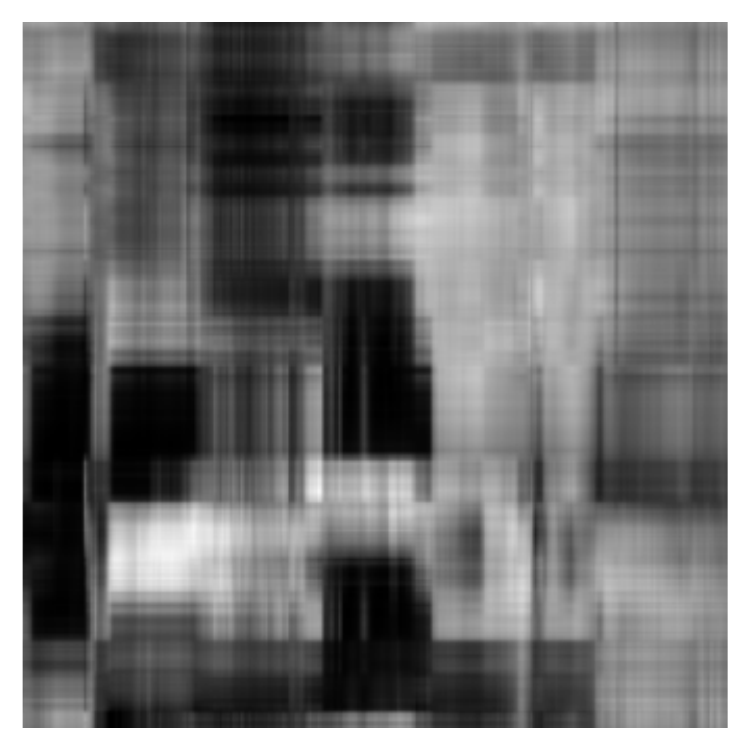}}%
\hspace{0.1cm}
\subfloat[Diminishing rank 10.]{\includegraphics[width=3.8cm]{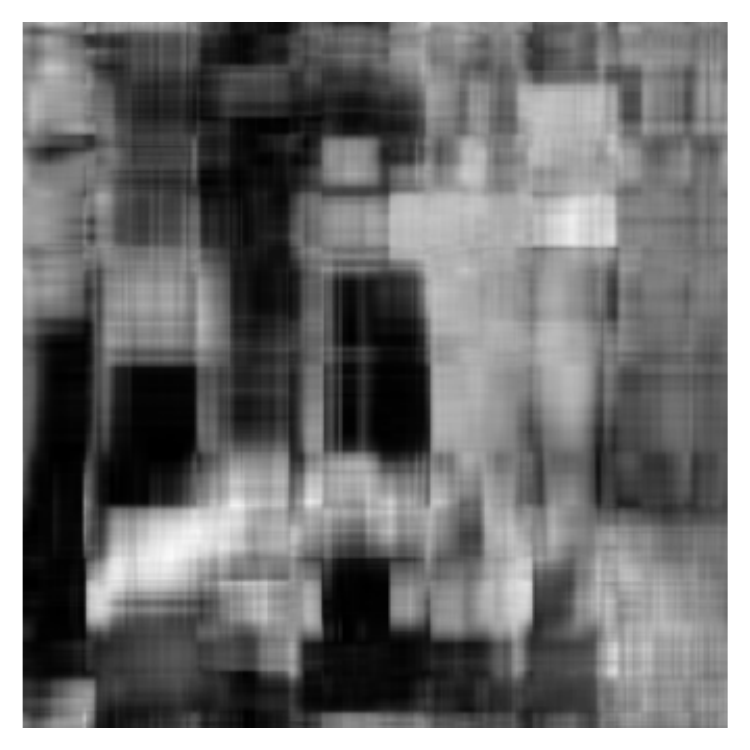}}%
\hspace{0.1cm}
\subfloat[Diminishing rank 50.]{\includegraphics[width=3.8cm]{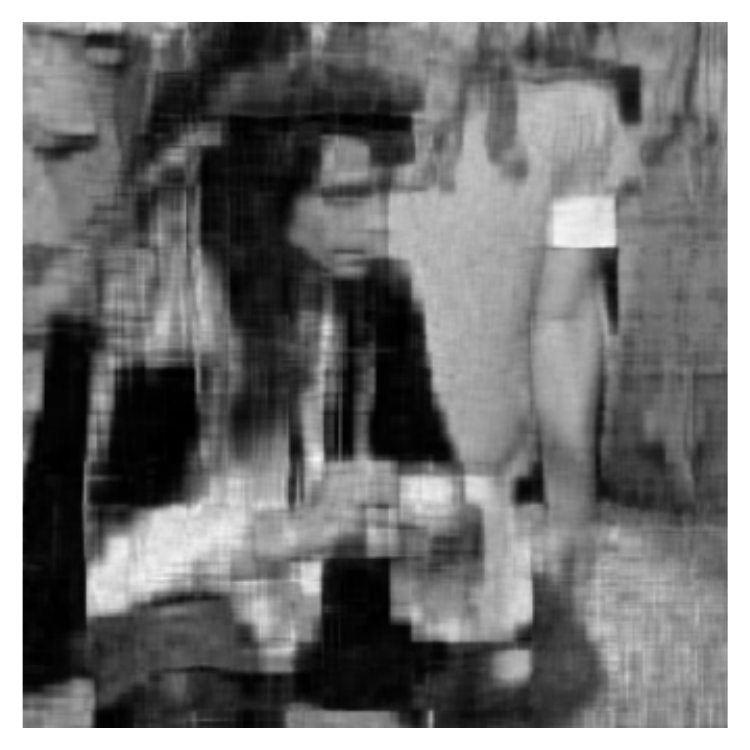}}%
\hspace{0.1cm}
\subfloat[Diminishing rank 100.]{\includegraphics[width=3.8cm]{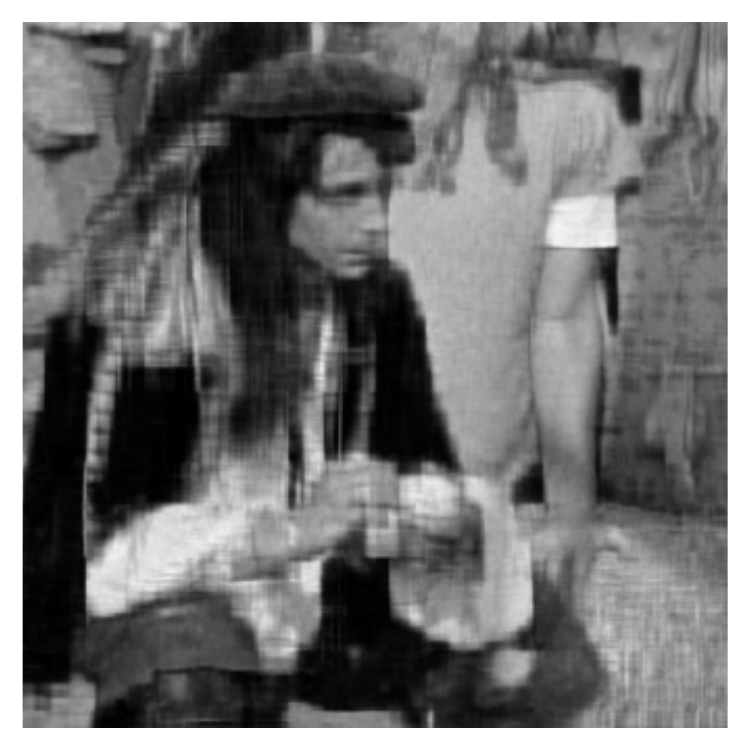}}%
\hspace{0.1cm}
\subfloat[Polyak rank 5.]{\includegraphics[width=3.8cm]{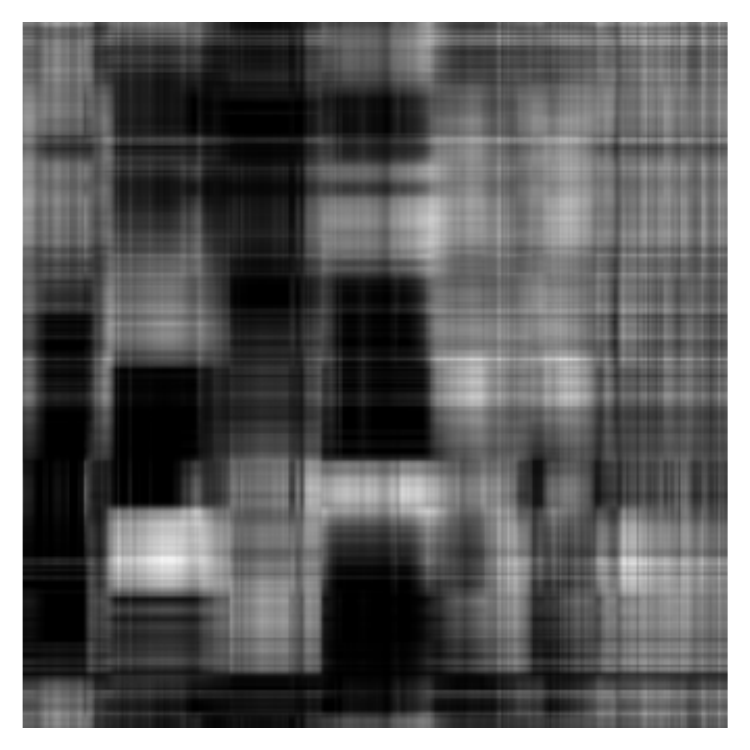}}%
\hspace{0.1cm}
\subfloat[Polyak rank 10.]{\includegraphics[width=3.8cm]{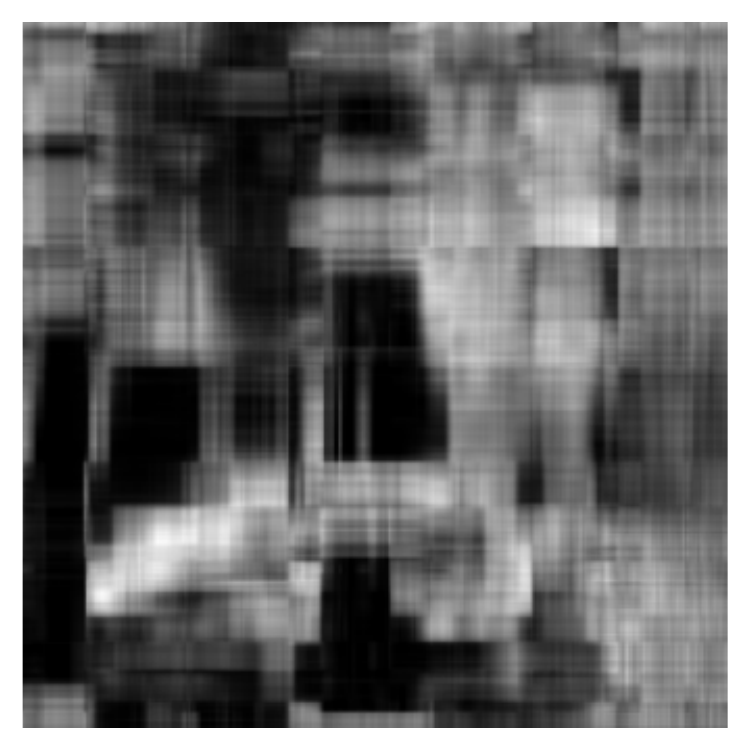}}%
\hspace{0.1cm}
\subfloat[Polyak rank 50.]{\includegraphics[width=3.8cm]{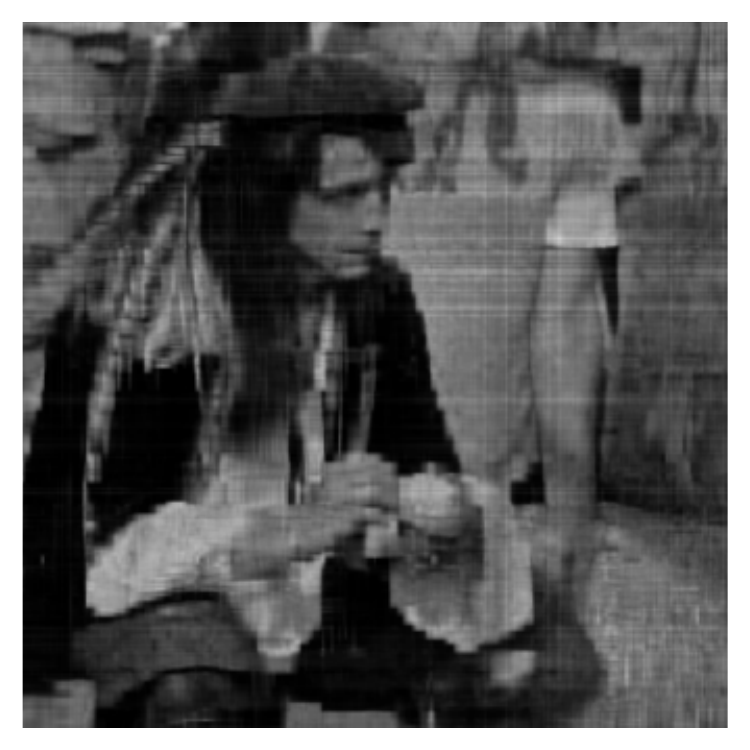}}%
\hspace{0.1cm}
\subfloat[Polyak rank 100.]{\includegraphics[width=3.8cm]{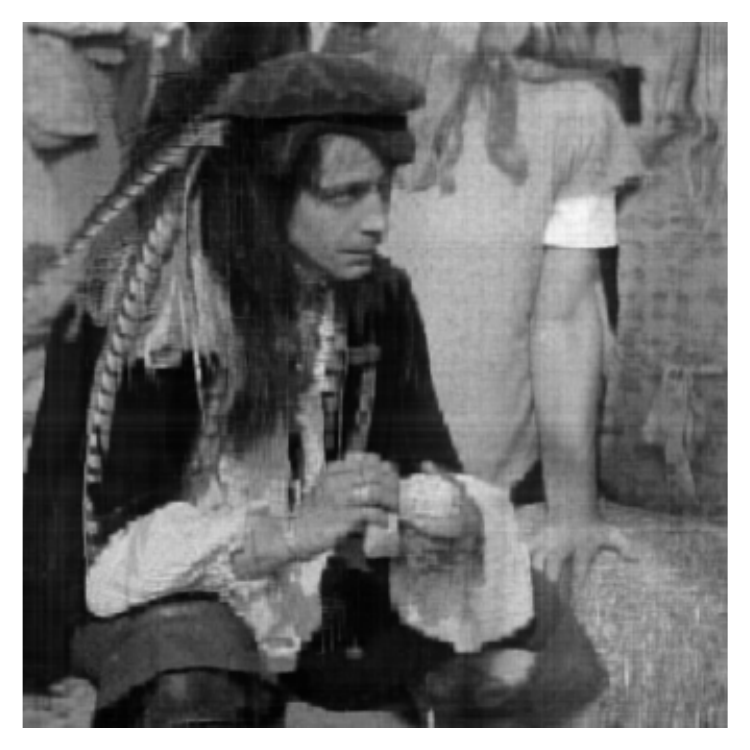}}%
\hspace{0.1cm}
\subfloat[Scaled Polyak rank 5.]{\includegraphics[width=3.8cm]{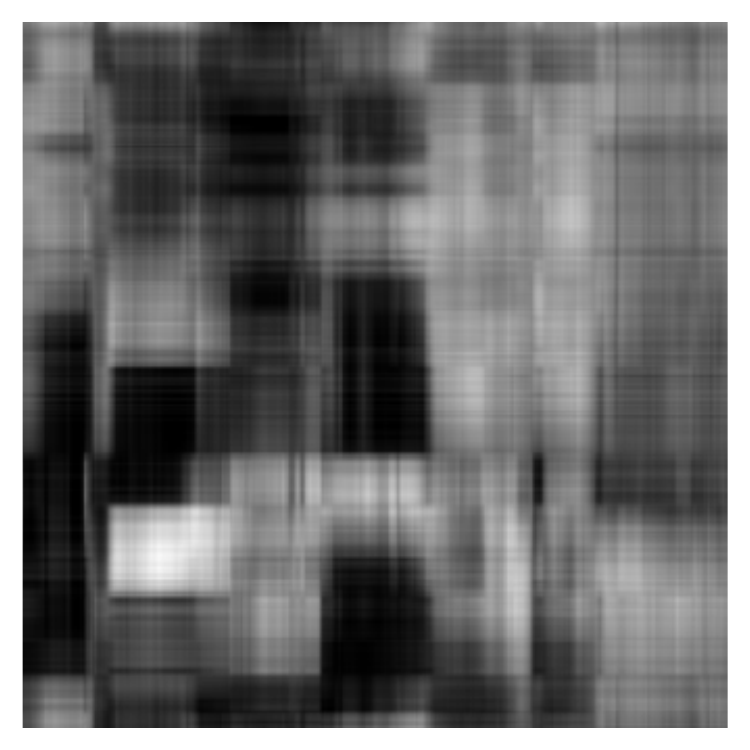}}%
\subfloat[Scaled Polyak rank 10.]{\includegraphics[width=3.8cm]{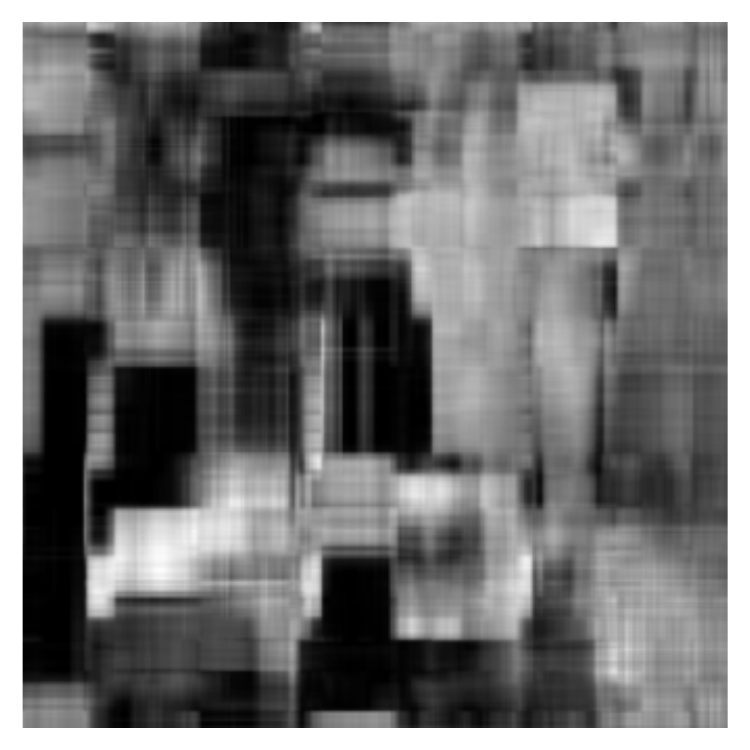}}%
\hspace{0.1cm}
\subfloat[Scaled Polyak rank 50.]{\includegraphics[width=3.8cm]{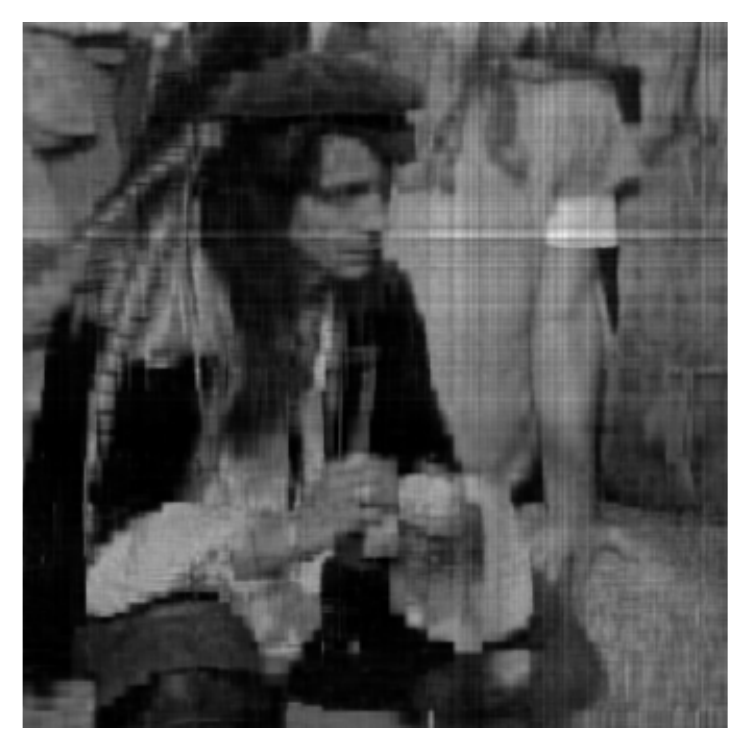}}%
\hspace{0.1cm}
\subfloat[Scaled Polyak rank 100.]{\includegraphics[width=3.8cm]{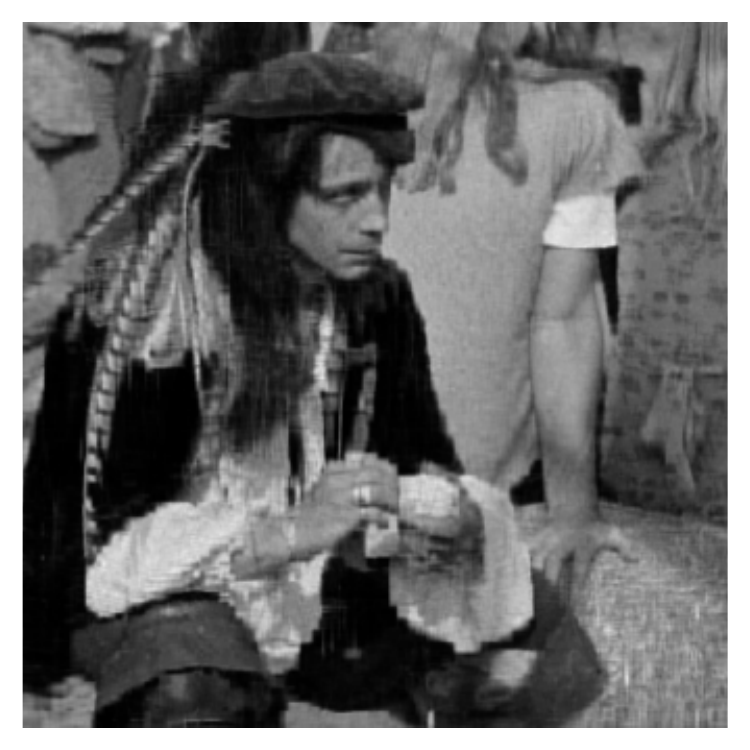}}%
\hspace{0.1cm}
\caption{Comparison of reconstructed images for different ranks ($r=5, 10, 50, 100$) using four step-size strategies: Decaying, Diminishing, Polyak, and Scaled Polyak.}
\label{fig:compress_rec}
\end{figure}

\begin{figure}[!htb]
\centering
\subfloat[Loss values rank 5.]{\includegraphics[width=8.2cm]{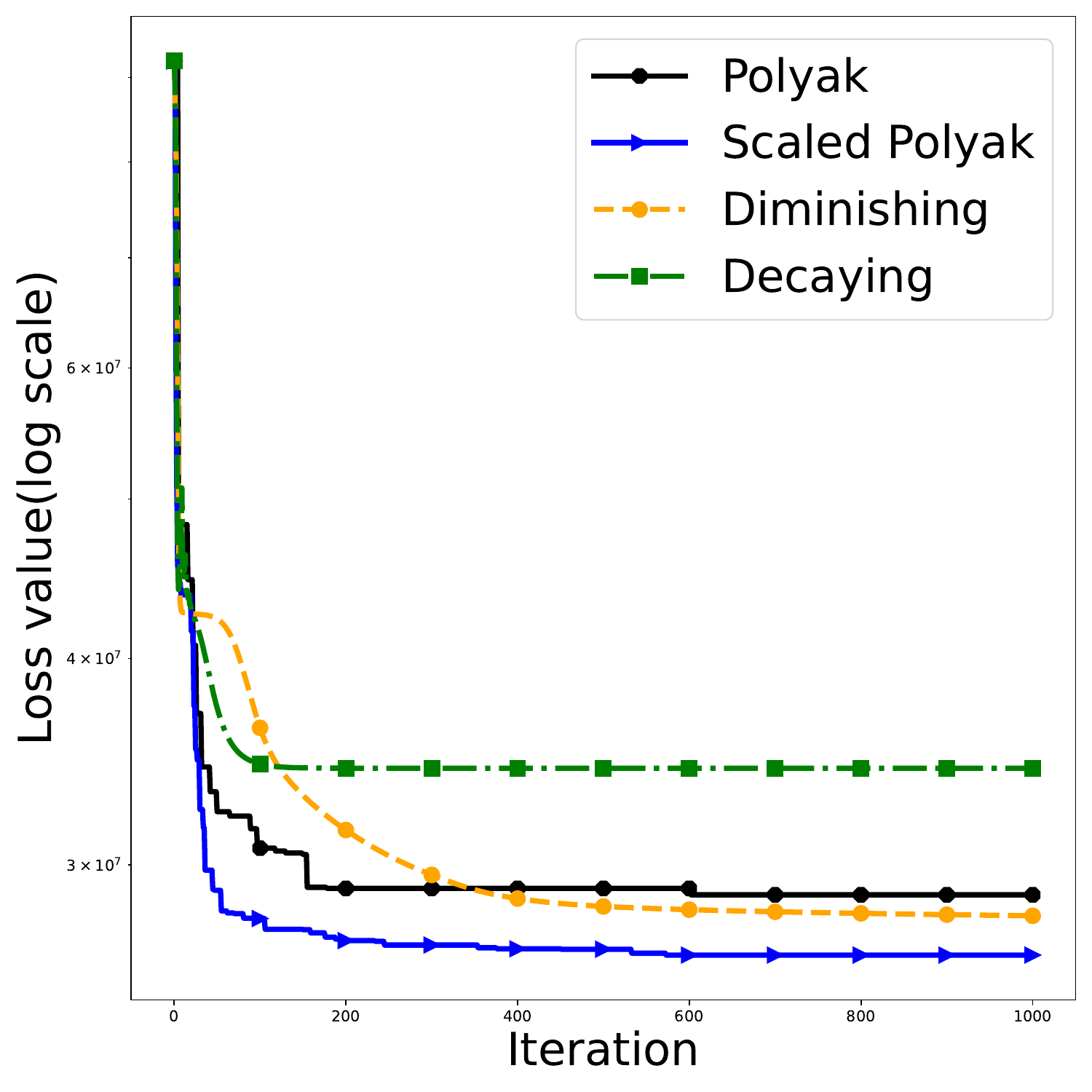}}%
\hspace{0.1cm}
\subfloat[Loss values rank 10.]{\includegraphics[width=8.2cm]{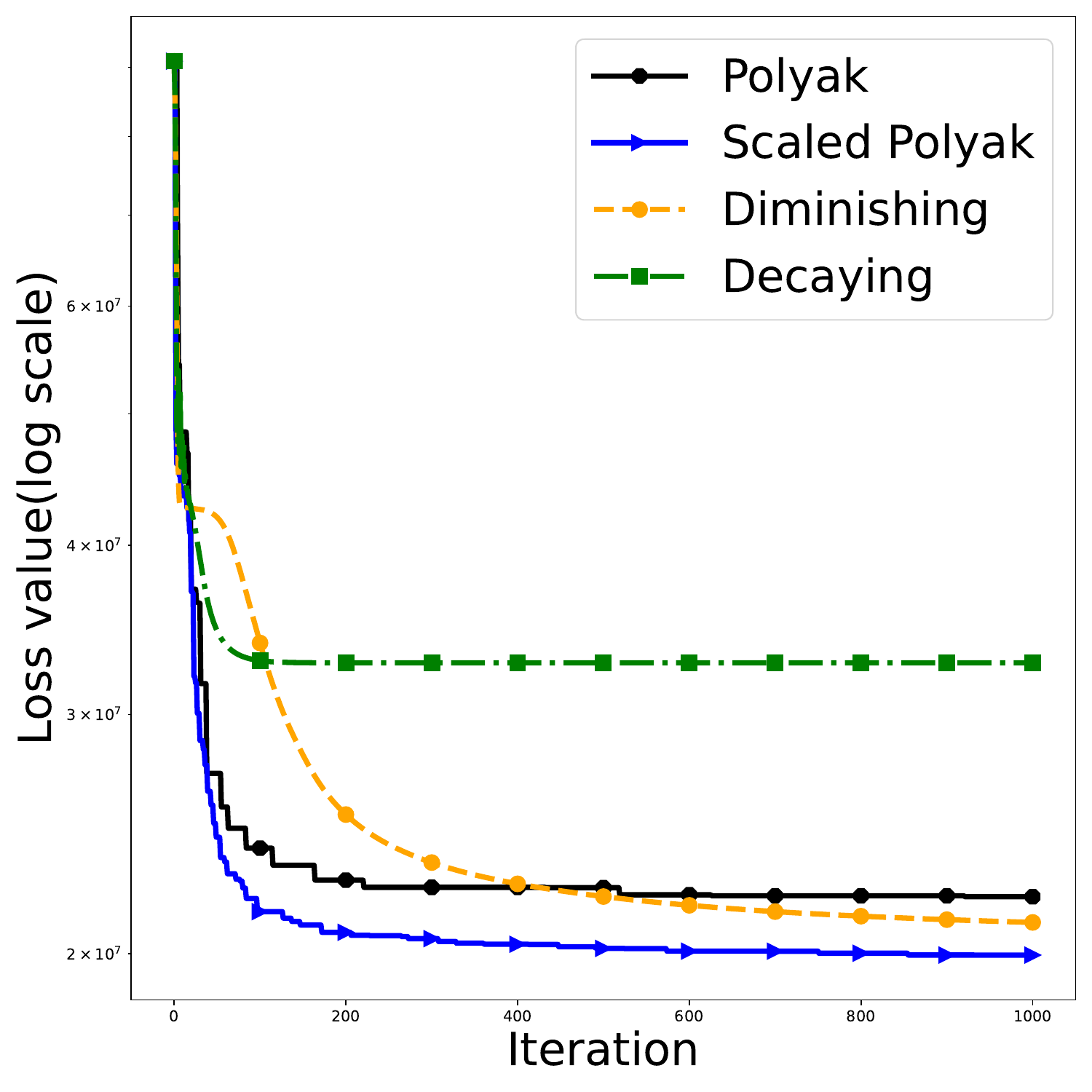}}%
\hspace{0.1cm}
\subfloat[Loss values rank 50.]{\includegraphics[width=8.2cm]{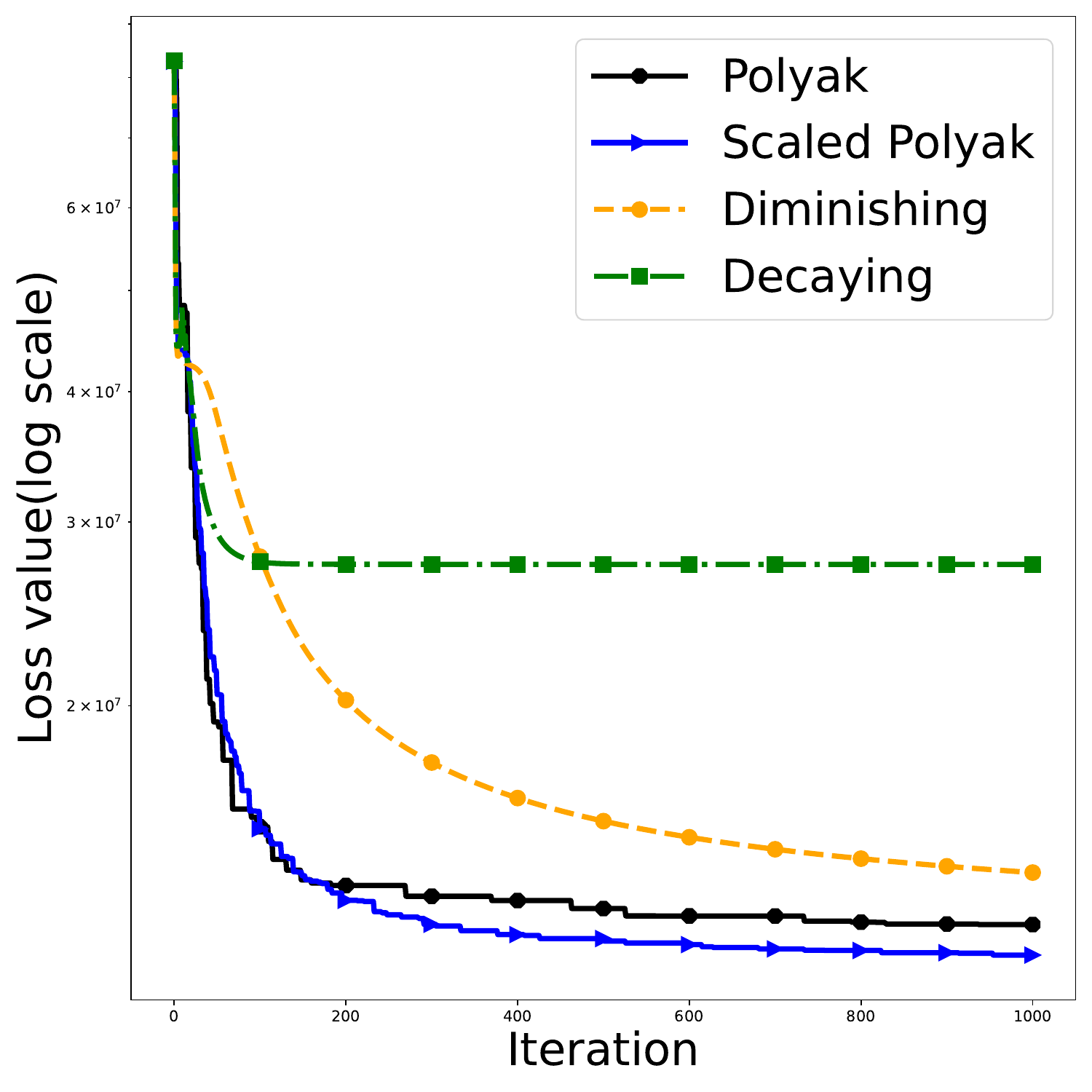}}%
\hspace{0.1cm}
\subfloat[Loss values rank 100.]{\includegraphics[width=8.2cm]{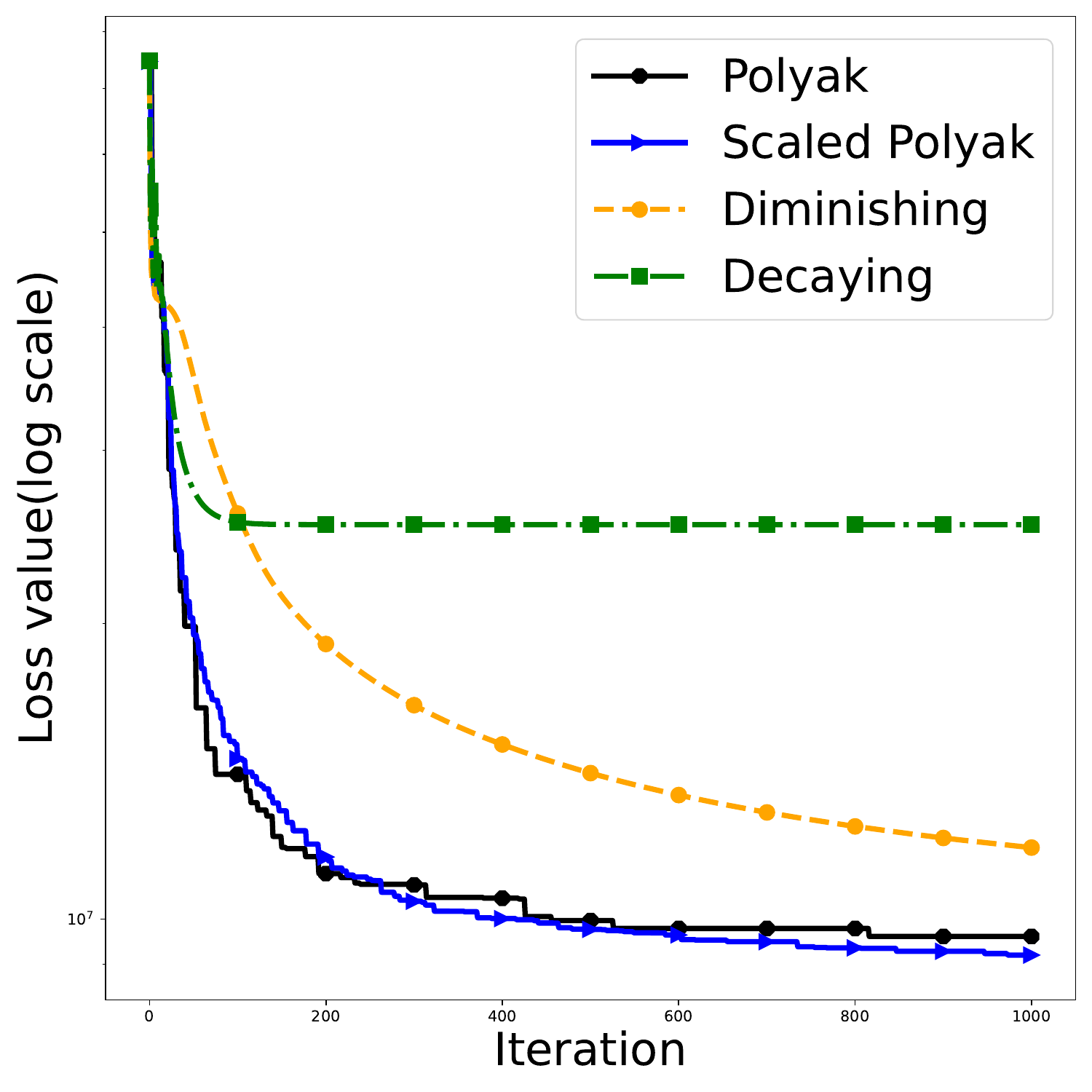}}%
\hspace{0.1cm}
\caption{Convergence plots showing the loss value (log scale) over 1000 iterations for different ranks ($r=5, 10, 50, 100$) using the decaying, diminishing, Polyak, and scaled Polyak step-sizes.}
\label{fig:compress_loss}
\end{figure}

\subsubsection{{\bf Face recognition using RNMF}}

NMF has been extensively applied to face recognition tasks due to its ability to decompose facial images into additive, parts-based representations that align with human facial features' intuitive understanding; cf. \cite{guillamet2002non}. NMF captures localized features such as eyes, noses, and mouths by factorizing a nonnegative data matrix into two nonnegative matrices, facilitating more interpretable and efficient data representations.

Here, we applied the projected subgradient methods to the RNMF problem \cref{eq:RNMF} on the Olivetti Faces dataset. The performance of the considered methods is evaluated using a K-Nearest Neighbors (KNN) classifier for three different neighborhood sizes ($k=1,3,5$). For the reconstruction process, we set the rank $r=128$ and a maximum of iterations to $1000$. Reconstructed images are assessed as quantitative metrics. Two initialization methods were used: Nonnegative SVD-R~\cite{trigeorgis2014deep} (\Cref{fig:loss_FR-1}) and Nonnegative random initialization (\Cref{fig:loss_FR-2}). The results indicate that the choice of starting point plays a crucial role in RNMF performance, as highlighted in \Cref{fig:loss_FR}.
All methods performed well for $k=1$ with Polyak achieving the highest precision, $92.5\%$ (\Cref{tab:rmse_methods}). For ($k=3$), Polyak again shows the best accuracy ($86.2\%$), followed by the scaled Polyak and the diminishing ($85.0\%$), while the decaying dropped to $82.5\%$. For $k=5$, the Polyak and diminishing both achieve ($85.0\%$), the scaled Polyak slightly lags at ($82.5.0\%$), and the decaying drops further to ($78\%$).
These findings suggest that while the Polyak and diminishing methods show stability across different neighborhood sizes, the scaled Polyak tends to have slightly reduced performance as complexity increases, and the decaying method demonstrates clear limitations in preserving discriminative features during dimensionality reduction. Overall, Polyak's step-size strategy stands out for its superior generalization capability in high-dimensional data scenarios, making it a robust choice for complex classification tasks.

\begin{table}[!htbp]
    \centering
    \begin{tabular}{|c|c|c|c|c|}
        \hline
        Step-size&Polyak& \text{Scaled Polyak} &Diminishing& Decaying \\ \hline
        Class 1& 0.925  & 0.913           &0.913        &   0.875\\ \hline
        Class 3& 0.862  & 0.850           &0.850        &   0.825 \\ \hline
        Class 5& 0.850  & 0.825           &0.850      &   0.788\\ \hline
    \end{tabular}
    \caption{Classification accuracy of K-Nearest Neighbors (KNN) for the Olivetti Faces dataset across three neighborhood sizes ($k=1,2,3$) after dimensionality reduction using NMF.}
    \label{tab:rmse_methods}
\end{table}

\begin{figure}[!htbp]
\centering
\subfloat[Polyak's step-size]{\includegraphics[width=7.5cm]{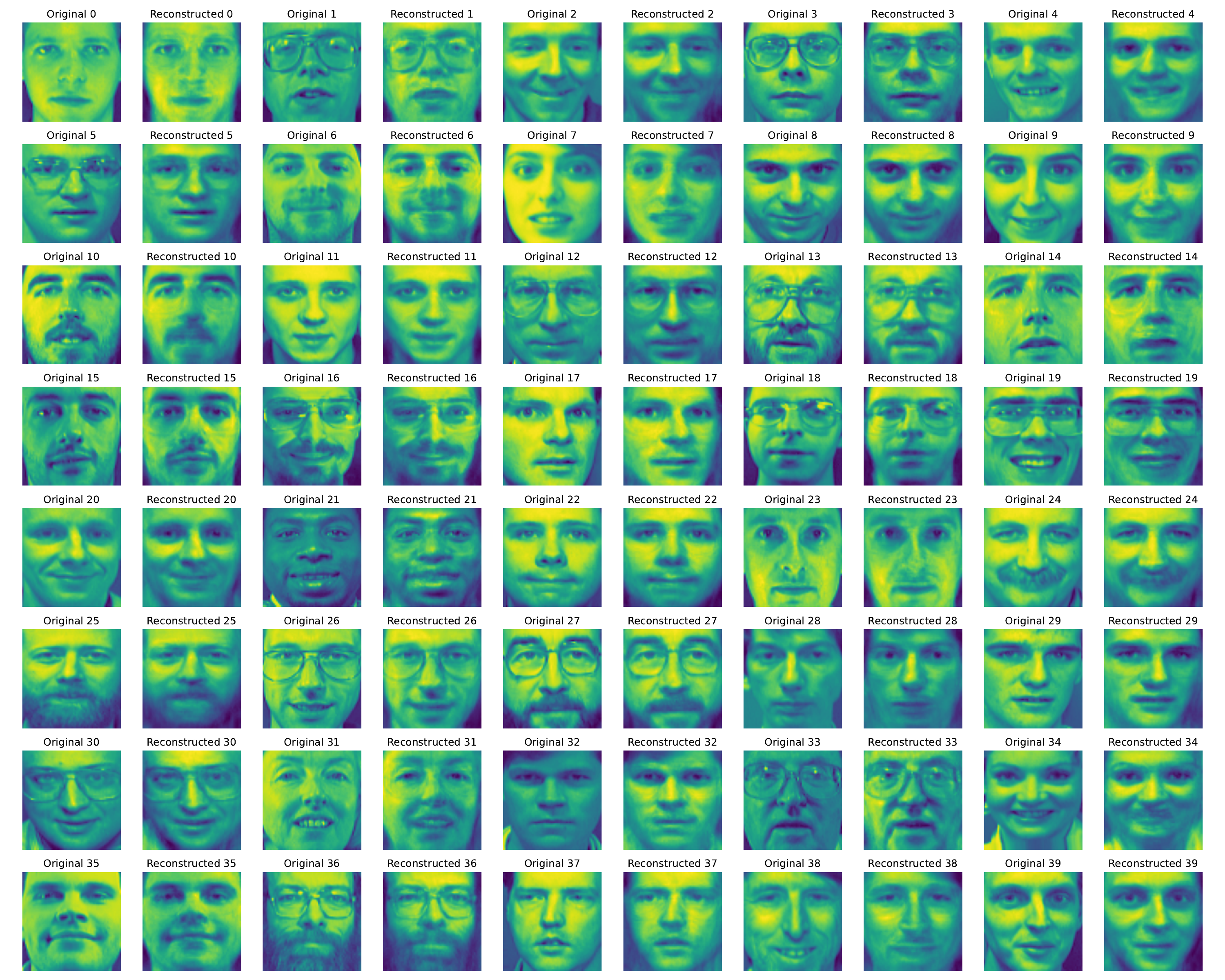}}%
\hspace{0.1cm}
\subfloat[Scaled Polyak's step-size]{\includegraphics[width=7.5cm]{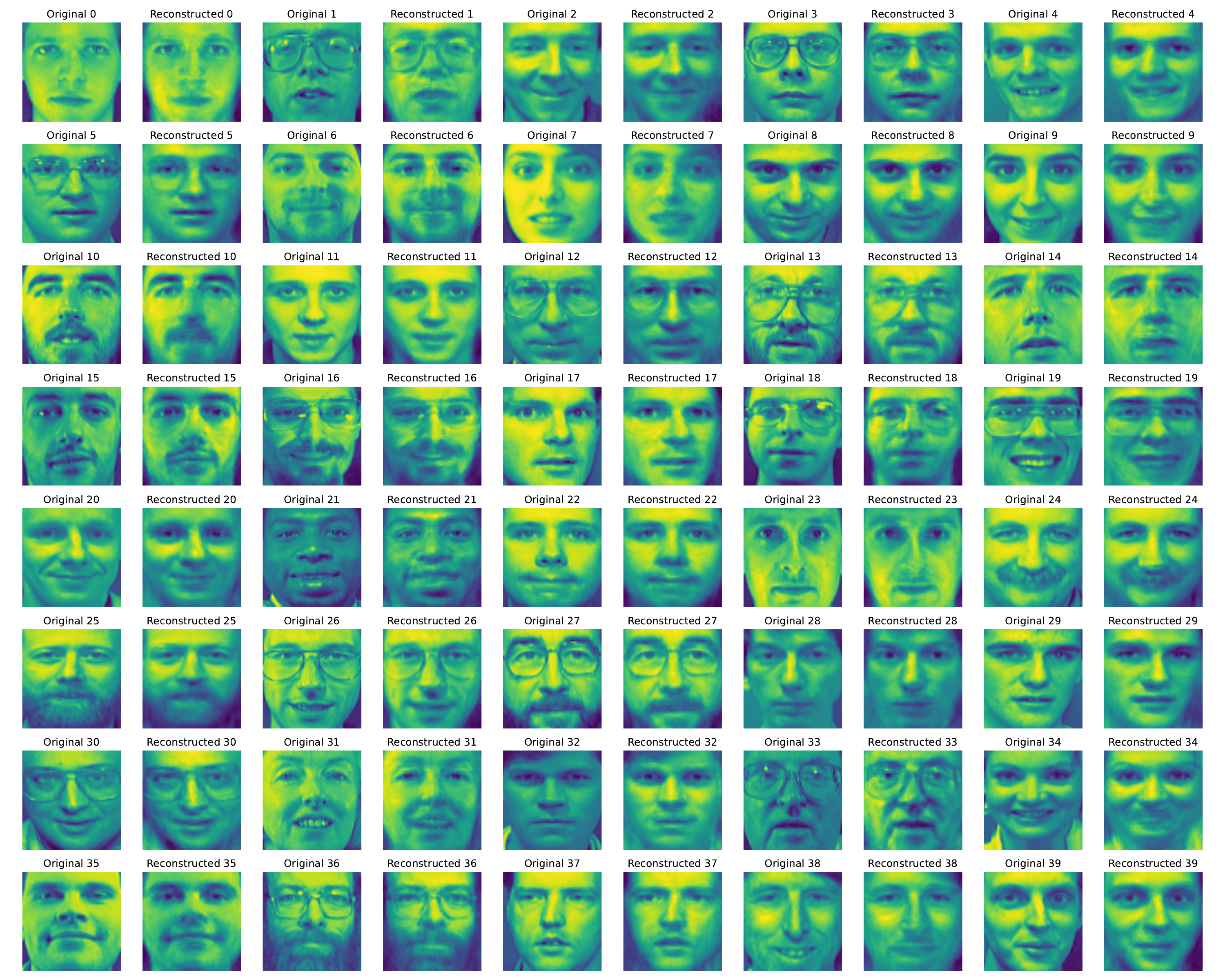}}%
\hspace{0.1cm}
\subfloat[Decaying step-size.]{\includegraphics[width=7.5cm]{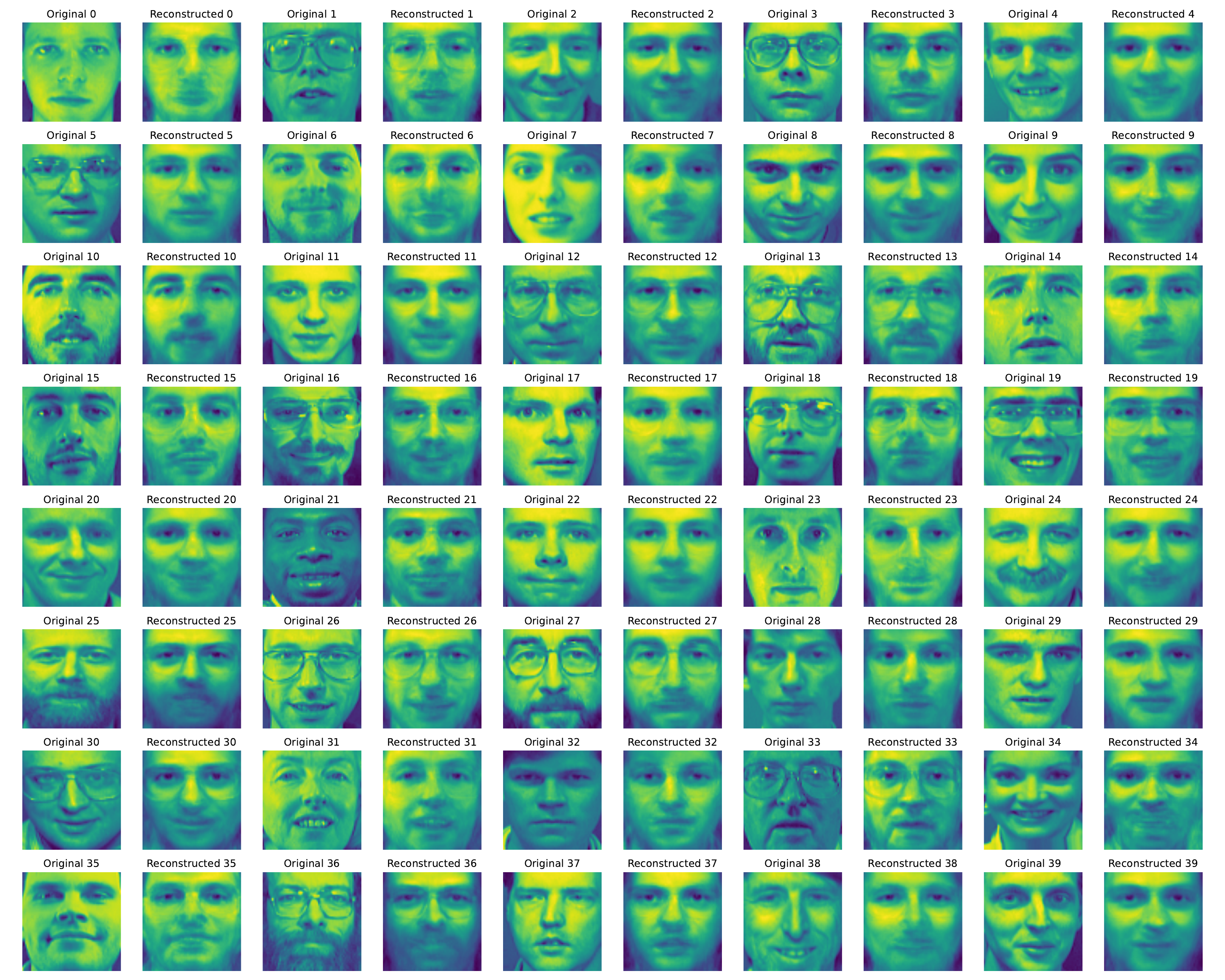}}%
\hspace{0.1cm}
\subfloat[Diminishing step-size.]{\includegraphics[width=7.5cm]{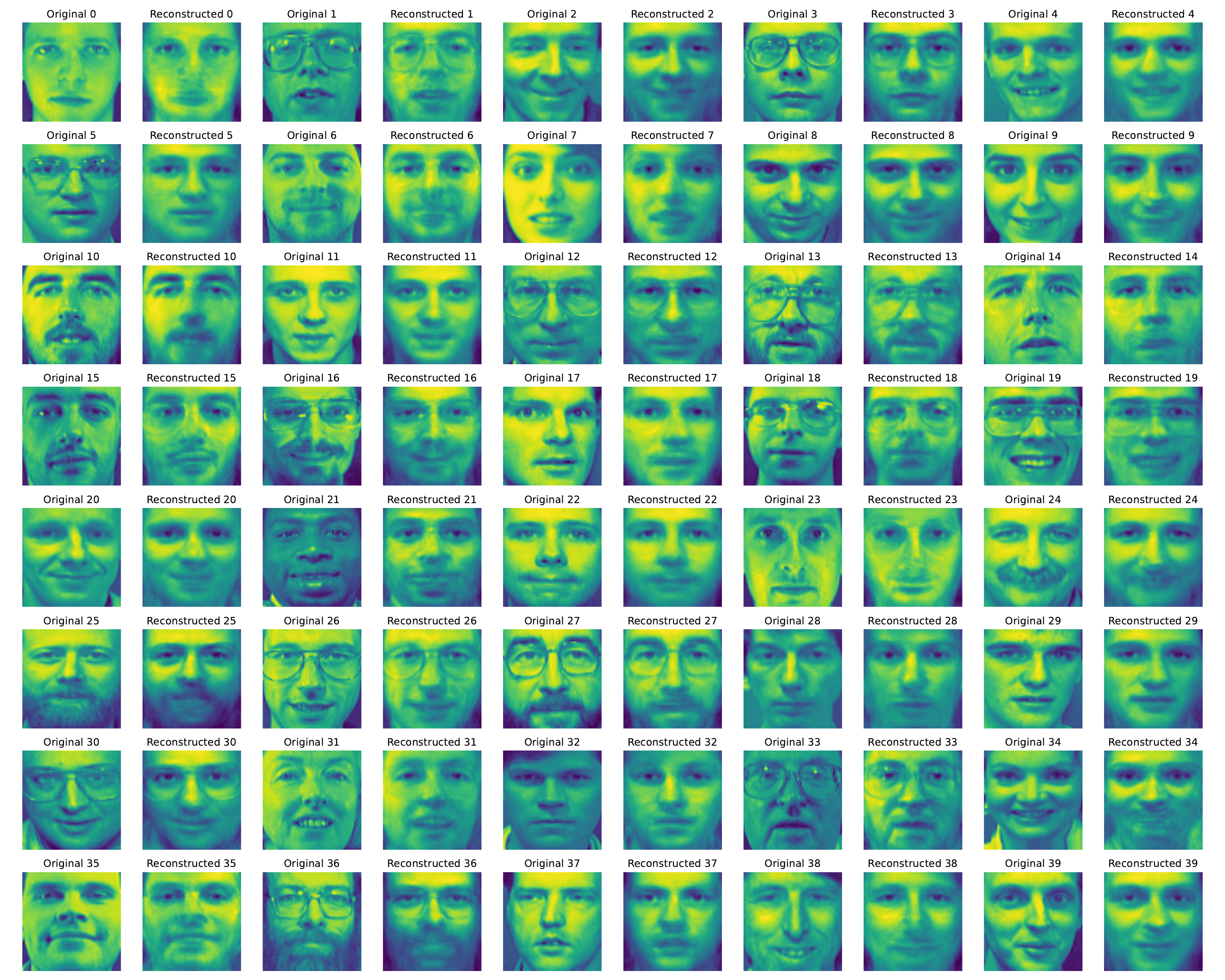}}%
\vspace{-2mm}
\caption{Visual comparison of original and reconstructed images from the Olivetti Faces dataset using four step-size strategies: Polyak, scaled Polyak, decaying, and diminishing. Each subfigure displays original images (left) alongside their reconstructions (right) for 40 selected individuals.}
\label{fig:contour}
\end{figure}

\begin{figure}[!htbp]
\centering
\subfloat[Nonnegative SVD starting point.]{\includegraphics[width=7.5cm]{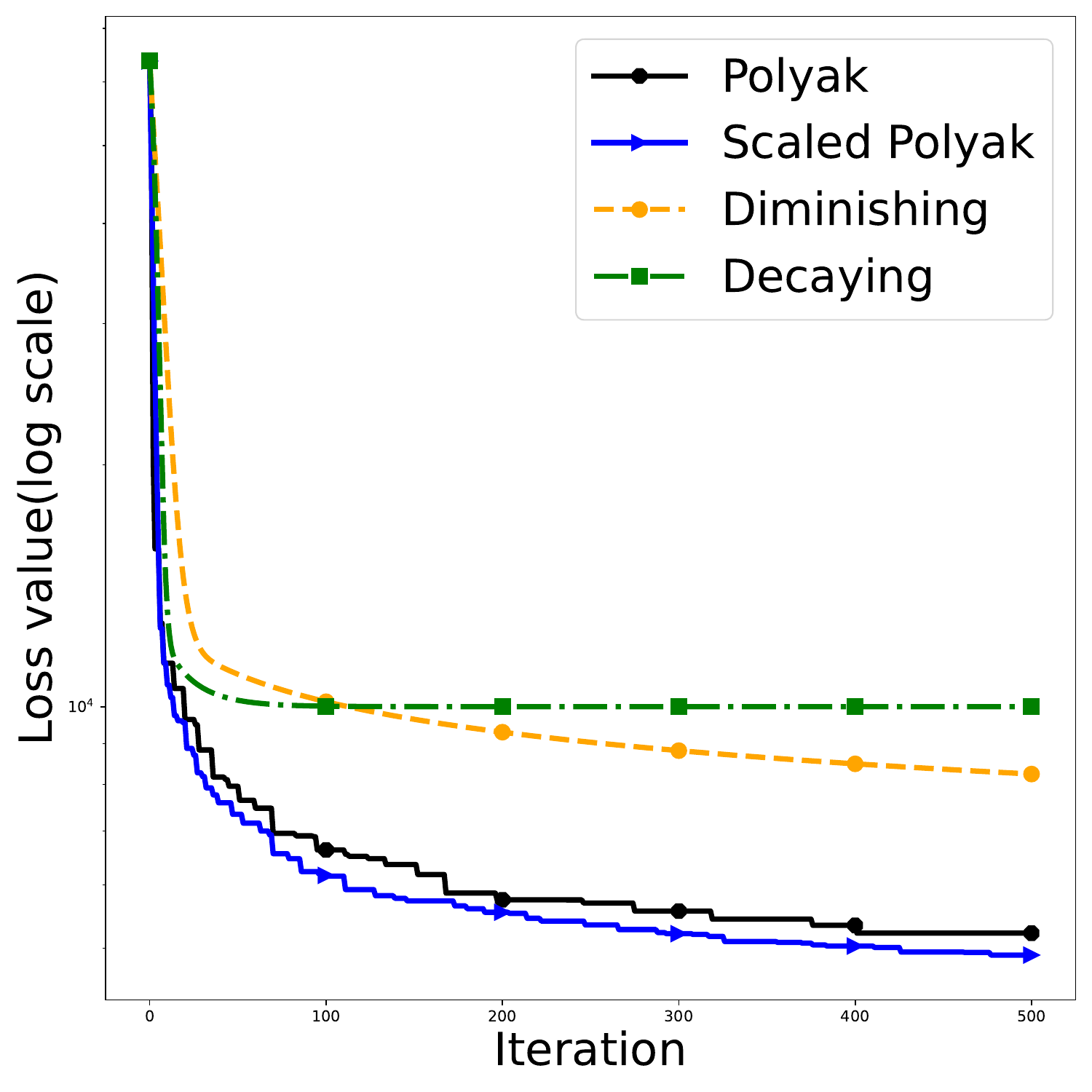}\label{fig:loss_FR-1}}%
\hspace{0.1cm}
\subfloat[Nonnegative random starting point.]{\includegraphics[width=7.5cm]{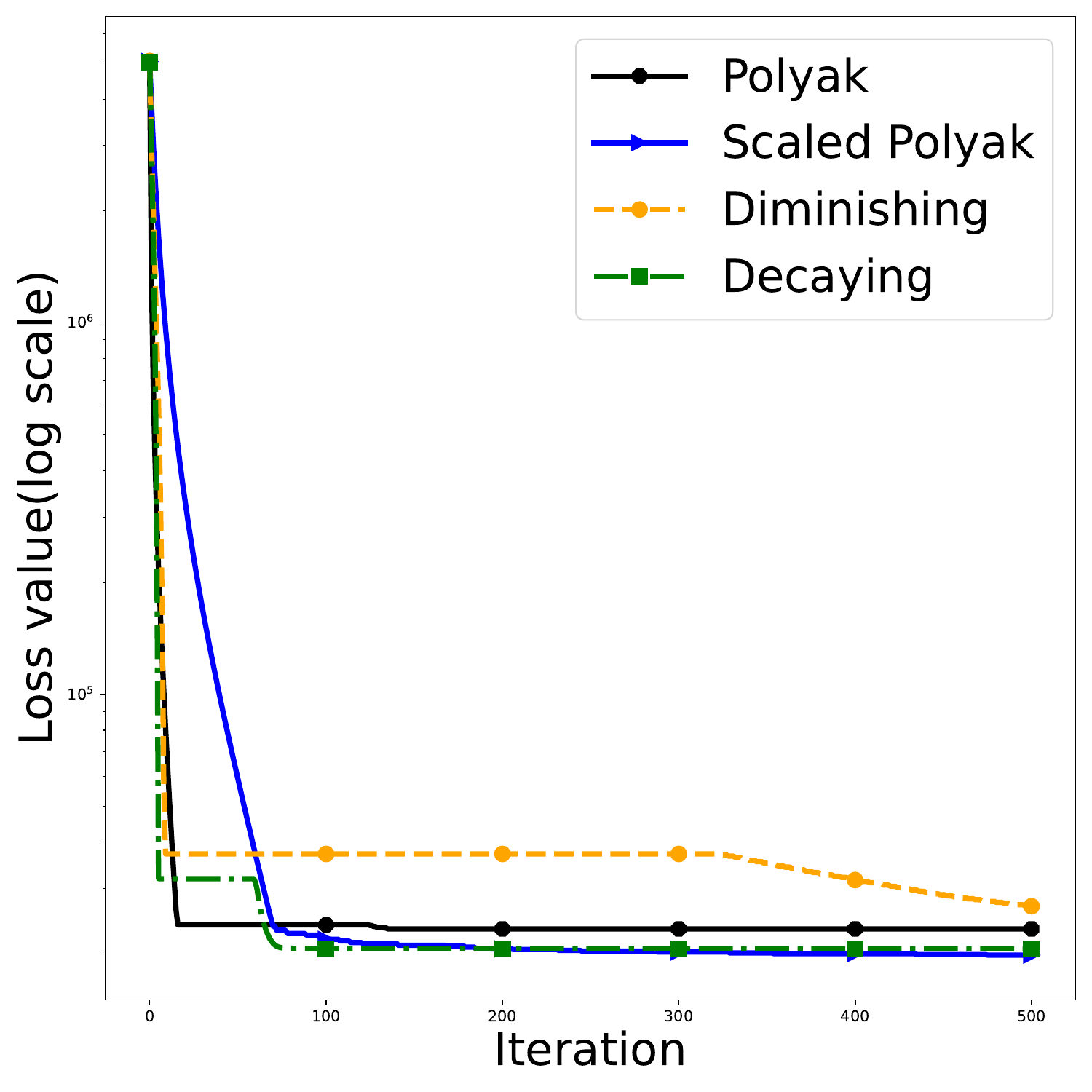}\label{fig:loss_FR-2}}%
\vspace{-2mm}
\caption{Loss convergence (log scale) for different step-size strategies on the Olivetti Faces dataset over 1000 iterations with two different starting points.}
\label{fig:loss_FR}
\end{figure}


\subsection{{\bf Robust image deblurring problem}}

We next illustrate the numerical behavior of the proposed projected subgradient methods on a robust image deblurring problem. Image deblurring can be formulated as the linear inverse problem
\[
y = Ax^\star + e,
\]
where $x^\star \in \mathbb{R}^n$ denotes the original image, $A \in \mathbb{R}^{n\times n}$ represents a blur operator, and $e \in \mathbb{R}^n$ corresponds to additive noise~\cite{jackie2007deblurring}.
To enhance robustness against noise and outliers, we consider
\begin{equation}\label{eq:robust_deblur}
\min_{x \in \mathbb{R}^n} \|Ax - y\|_1 + \lambda \sum_{i=1}^n r(x_i),
\end{equation}
where $r$ is the GMCP function introduced in \Cref{exa:para}~\ref{exa:para-5}, defined by
\[
r(t)=
\begin{cases}
(1+\nu)|t| - |t|^{1+\nu},\quad & \text{if } |t|\le 1,\\
\nu, & \text{if } |t|>1,
\end{cases}
\]
with $\nu \in (0,1]$.
By \cref{pro:relWeakConvCal}~\ref{pro:relWeakConvCal-1} and \cref{thm:S-chara}~\ref{thm:S-chara-2}, the objective function in~\cref{eq:robust_deblur} is $\nu$-paraconvex and satisfies a local H\"olderian error bound.

The forward operator $A$ models spatially invariant blur and is implemented as a two-dimensional convolution with a normalized $5\times5$ averaging kernel using FFTs under periodic boundary conditions. The original image $x^\star$ is the standard \emph{Cameraman} image normalized to $[0,1]$. The observations are generated with a blurred signal-to-noise ratio (BSNR) of $40$ dB.
The parameters are set as
\[
\lambda = 10^{-2}, \qquad \nu = \tfrac{1}{2},
\]
and all methods are run for $150$ iterations with initial point $x_0 = y$.
We compare four step-size strategies: Polyak, scaled Polyak, diminishing, and geometrically decaying.

\Cref{fig:loss_psnr_deblur-1} presents the running minimum of the objective values (log-scale). The Polyak and scaled Polyak strategies decrease the objective more rapidly during the initial iterations, while the diminishing rule exhibits slower convergence. The decaying strategy provides stable and competitive behavior.
\Cref{fig:loss_psnr_deblur} shows the evolution of the PSNR over the iterations. The Polyak-type strategies attain higher PSNR values, reaching approximately $29$ dB after $150$ iterations, whereas the diminishing rule yields lower reconstruction quality.
The reconstructed images obtained after $150$ iterations are displayed in \Cref{fig:im_deblur}. The Polyak and scaled Polyak strategies produce visually sharper reconstructions, while the diminishing rule results in noticeable over-smoothing.

\begin{figure}[!htbp]
\centering
\subfloat[Original image $x$.]{\includegraphics[width=5.5cm]{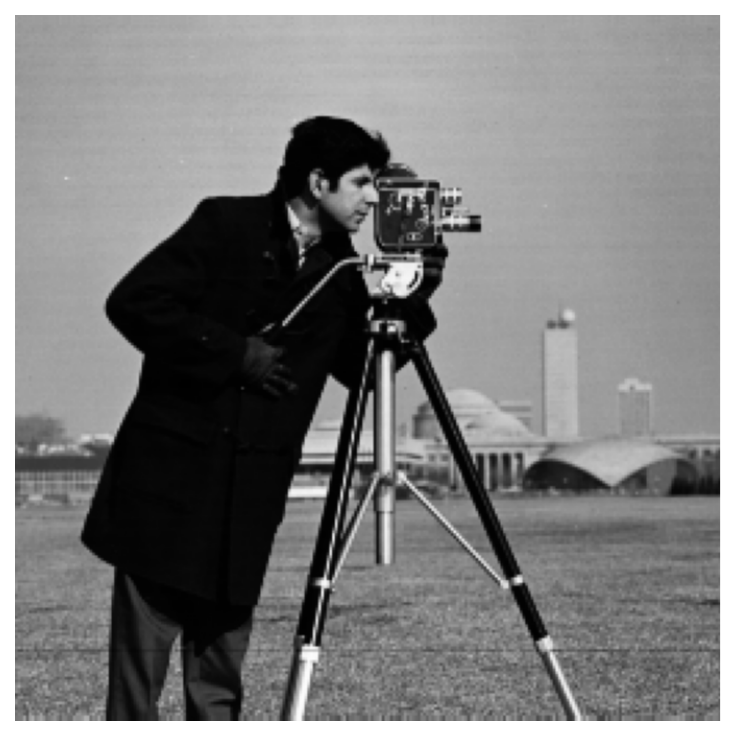}}%
\hspace{0.1cm}
\subfloat[Blurred image $y$.]{\includegraphics[width=5.5cm]{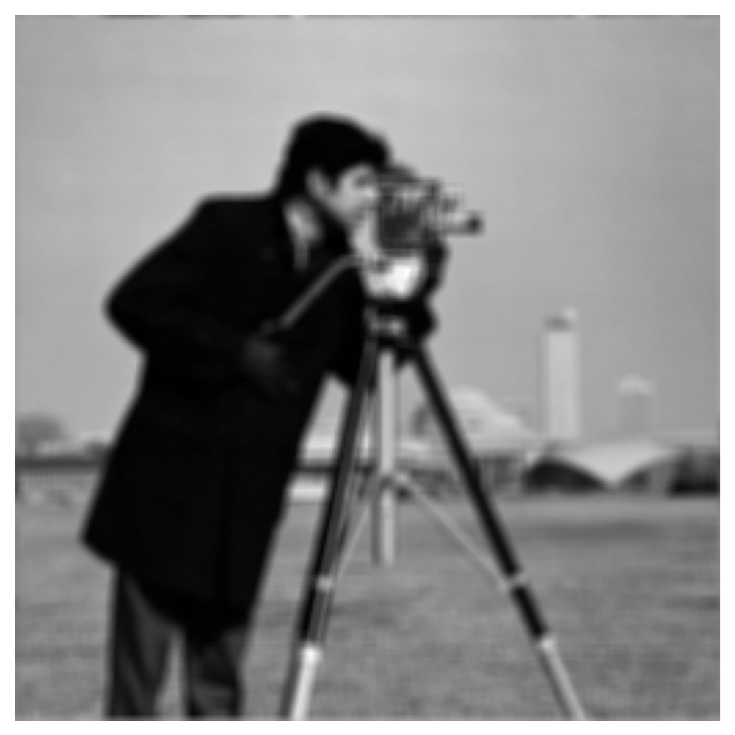}}%
\hspace{0.1cm}
\subfloat[Decaying (PSNR $=27.88$ dB).]{\includegraphics[width=5.5cm]{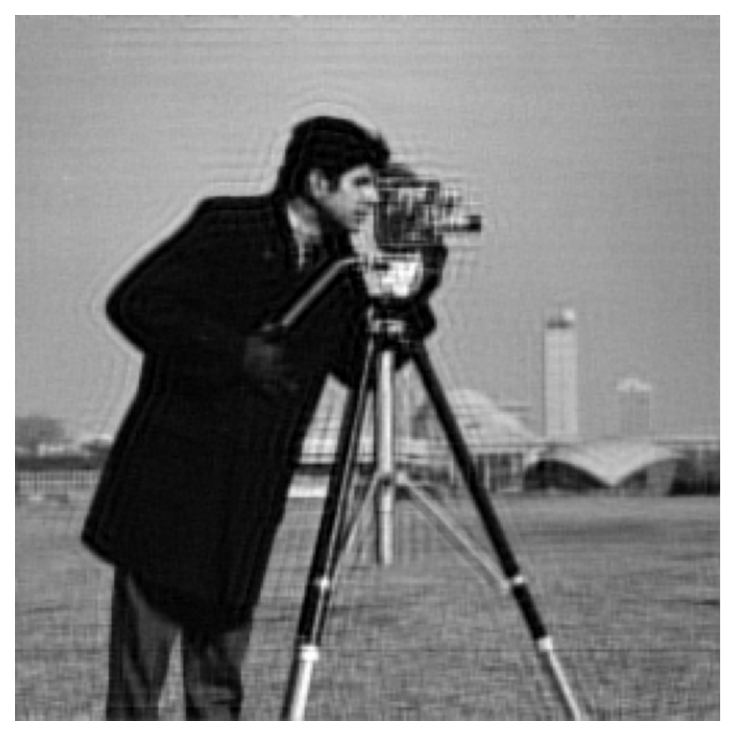}}%
\hspace{0.1cm}
\subfloat[Diminishing (PSNR $=24.19$ dB).]{\includegraphics[width=5.5cm]{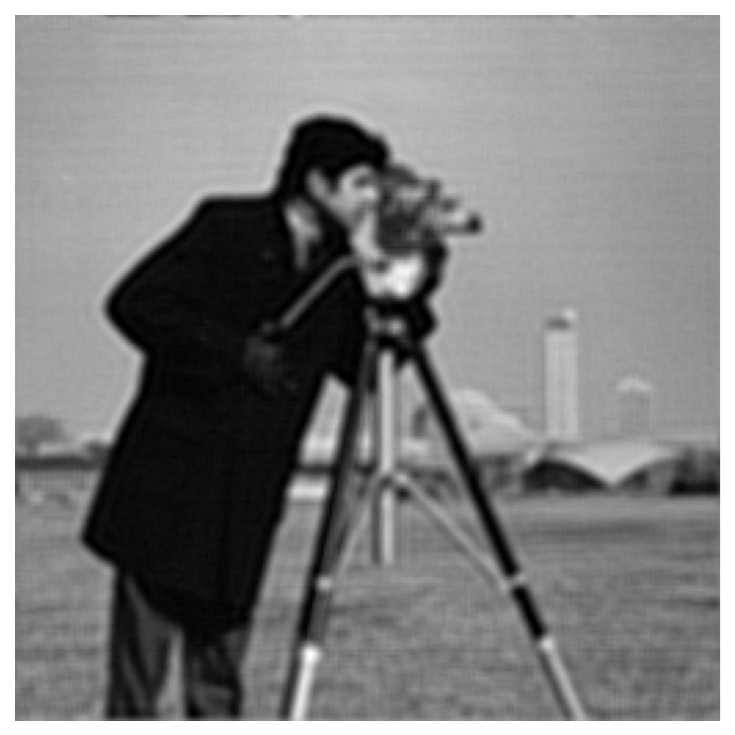}}%
\hspace{0.1cm}
\subfloat[Polyak (PSNR $=29.26$ dB).]{\includegraphics[width=5.5cm]{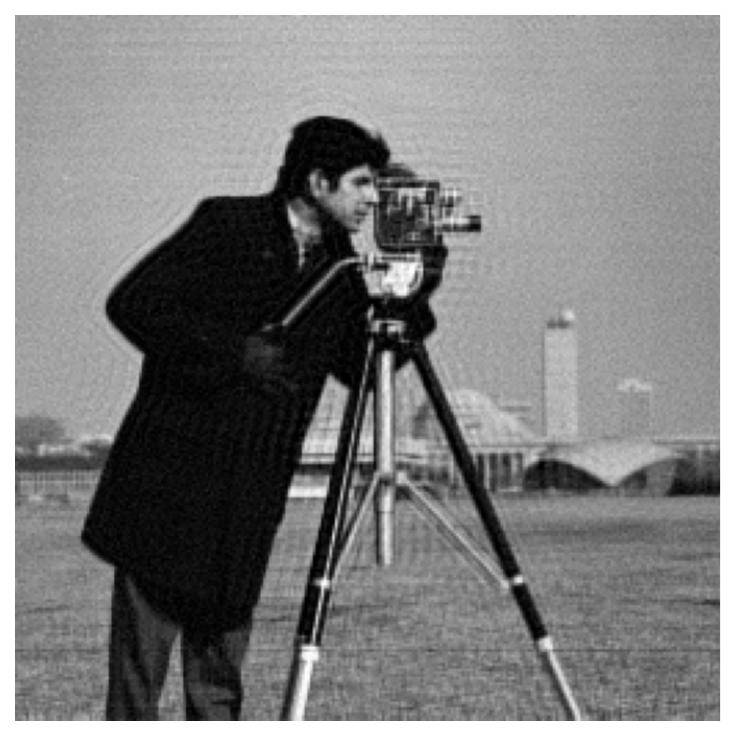}}%
\hspace{0.1cm}
\subfloat[Scaled Polyak  (PSNR $=28.86$ dB).]{\includegraphics[width=5.5cm]{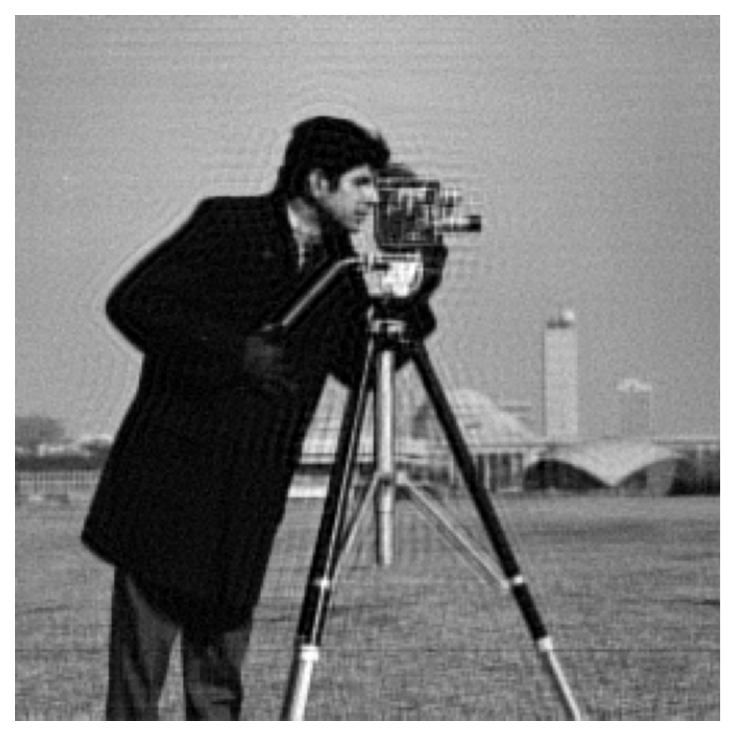}}%
\hspace{0.1cm}
\caption{Recovered images after 150 iterations for the robust image deblurring problem.}
\label{fig:im_deblur}
\end{figure}
%
\begin{figure}[!htbp]
\centering
\subfloat[Loss values (log-scale) over 150 iterations.]
{\includegraphics[width=8.1cm]{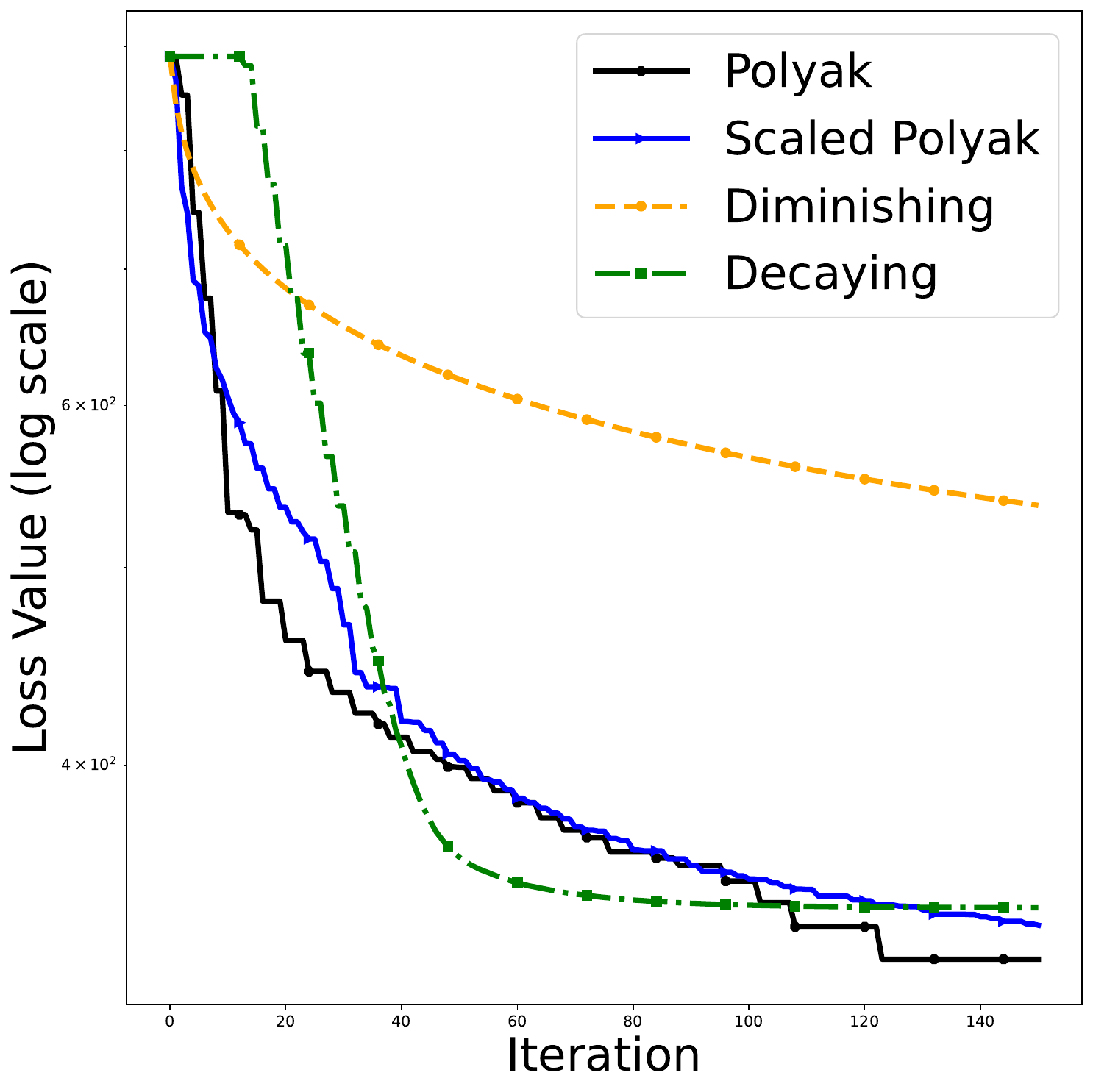}\label{fig:loss_psnr_deblur-1}}
\hspace{0.2cm}
\subfloat[PSNR values over 150 iterations.]
{\includegraphics[width=8.1cm]{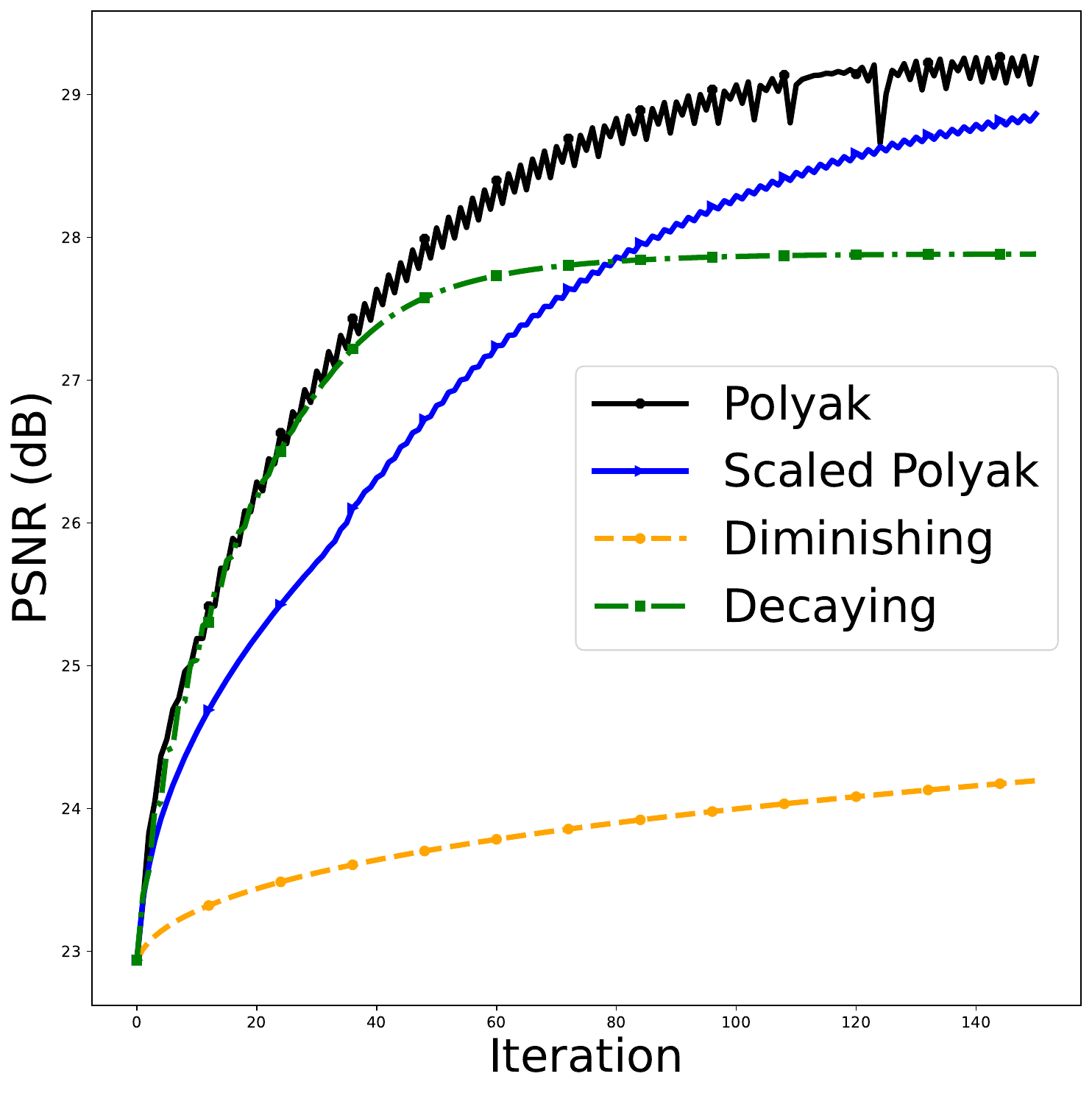}}
\caption{Cameraman deblurring.
(a) Objective value versus iteration (log scale).
(b) PSNR (dB) versus iteration.
Comparison of Polyak, scaled Polyak, diminishing, and decaying step-size strategies.}
\label{fig:loss_psnr_deblur}
\end{figure}

\section{Concluding remarks}\label{sec:Conclusion}

In this paper, we studied the projected subgradient methods for paraconvex functions under the H{\"o}lderian error bound condition. In the first step, we investigate the properties and characterizations of paraconvex functions, including their local Lipschitz continuity. Notably, we established that $\nu$-paraconvexity coincides with 
$\nu$-weak convexity on convex and compact sets. In addition, under the H{\"o}lderian error bound condition, we identified regions around the optimal set that include no saddle points. 

Next, we described the generic projected subgradient method with several step-sizes (i.e., constant, nonsummable diminishing, square-summable but not summable, geometrically decaying, and Scaled Polyak's step sizes). It was demonstrated that the projected subgradient method with a constant step-size achieves linear convergence up to a fixed threshold. Furthermore, we demonstrated subsequential and global convergence for nonsummable and square-summable but not summable step-sizes, respectively, while establishing linear convergence for geometrically decaying and Scaled Polyak's step-sizes.

Finally, we assessed the practical effectiveness of the proposed methods in robust low-rank matrix recovery problems, such as robust matrix completion, image inpainting, robust nonnegative matrix factorization, matrix compression, and robust nonnegative matrix factorization, and robust image deblurring.
Our preliminary numerical experiments highlighted the potential of our methods for nonsmooth and nonconvex problems that lack a suitable structure. Notably, the Scaled Polyak's step-size consistently outperformed the other considered algorithms, highlighting its practical value and potential for broader adoption in nonconvex optimization applications, e.g., solving high-order and non-Euclidean proximal-point subproblems; cf. \cite{ahookhosh2024high,ahookhosh2021bregman,kabgani2025moreau,kabgani2024itsopt}.


	

	\ifarxiv
		\bibliographystyle{plain}
	\else
		\phantomsection
		\addcontentsline{toc}{section}{References}
		\bibliographystyle{spmpsci}
	\fi
	\bibliography{Bibliography}

\end{document}